\theoremstyle{plain}
\newtheorem{proposition}{Proposition}[section]
\newtheorem{theorem}[proposition]{Theorem}
\newtheorem{lemma}[proposition]{Lemma}
\newtheorem{corollary}[proposition]{Corollary}
\theoremstyle{plain}
\newtheorem{definition}[proposition]{Definition}
\newtheorem{remark}[proposition]{Remark}
\theoremstyle{nonumberplain}
\newtheorem{proof}{Proof}
\newcommand{\qed}{\hfill $\Box$}
\newcommand{\ZZ}{{\mathbb Z}}
\newcommand{\RR}{{\mathbb R}}
\newcommand{\CC}{{\mathbb C}}
\newcommand{\LL}{{\mathbb L}}
\renewcommand{\d}{{\rm dim}}
\newcommand{\Vol}{{\rm Vol}}
\newcommand{\e}{\varepsilon}
\renewcommand{\SS}{{\mathcal S}}
\newcommand{\Spec}{{\rm Spec}}
\newcommand{\id}{{\rm id}}
\newcommand{\supp}{{\rm supp}}
\newcommand{\Int}{{\rm Int}}
\newcommand{\relint}{{\rm rel.int}}
\newcommand{\grad}{{\rm grad}}
\newcommand{\Db}{{\bf D}^{b}}
\newcommand{\Dbc}{{\bf D}_{c}^{b}}
\newcommand{\Kbc}{{\bf K}_{c}^{b}}
\newcommand{\F}{{\cal F}}
\newcommand{\G}{{\cal G}}
\renewcommand{\L}{{\cal L}}
\renewcommand{\1}{{\bf 1}}
\newcommand{\CF}{{\rm CF}}
\newcommand{\RG}{R\varGamma}
\newcommand{\tl}[1]{\widetilde{#1}}
\newcommand{\simto}{\overset{\sim}{\longrightarrow}}
\newcommand{\simot}{\overset{\sim}{\longleftarrow}}
\newcommand{\dsum}{\displaystyle \sum}
\renewcommand{\(}{\left(}
\renewcommand{\)}{\right)}
\newcommand{\longhookrightarrow}{\DOTSB\lhook\joinrel\longrightarrow}
\renewcommand{\labelenumi}{{\rm (\roman{enumi})}}
\title{Milnor fibers over singular toric varieties and nearby cycle sheaves\footnote{{\bf 2000 Mathematics Subject Classification: }32S40, 32S60, 32S55, 14M25, 52B20}}
\author{Yutaka \textsc{Matsui}\footnote{Department of Mathematics, Kinki University, 3-4-1, Kowakae, Higashi-Osaka, Osaka, 577-8502, Japan. E-mail: matsui@math.kindai.ac.jp} \and Kiyoshi \textsc{Takeuchi}\footnote{Institute of Mathematics, University  of Tsukuba, 1-1-1, Tennodai, Tsukuba, Ibaraki, 305-8571, Japan. E-mail: takemicro@nifty.com}}
\date{}
\begin{document}

\maketitle

\begin{abstract}
We propose a new sheaf-theoretical method for the calculation of the monodromy zeta functions of Milnor fibrations. As an application, classical formulas of Kushnirenko \cite{Kushnirenko} and Varchenko \cite{Varchenko} etc. concerning polynomials on $\CC^n$ will be generalized to polynomial functions on any toric variety.
\end{abstract}

\section{Introduction}\label{sec:1}

One of the most beautiful results in the theory of Milnor fibrations would be the formula for the (local) Milnor monodromy zeta functions obtained by Varchenko \cite{Varchenko} (see also \cite{Kushnirenko} and \cite{Oka-2} for the detail of this subject). In his formula, the Milnor monodromy zeta function $\zeta_f(t)\in \CC (t)^*$ at $0 \in \CC^n$ of a polynomial $f(x) \in \CC [x_1, x_2, \ldots , x_n]$ on $\CC^n$ such that $f(0)=0$ is expressed by the geometry of the Newton polygon of $f$ (for a similar and more precise result on Hodge structures, see also Tanabe \cite{Tanabe}). To prove it, he constructed a toric modification of $\CC^n$ on which the pull-back of $f$ defines a hypersurface with only normal crossing singularities. Since $\CC^n$ is a very special toric variety, it would be natural to generalize his formula to Milnor fibers over general singular toric varieties. In this paper, we realize this idea with the help of sheaf-theoretical methods, such as nearby cycle and constructible sheaves. In particular, in Theorem \ref{thm:3-4} we prove a formula for the monodromy zeta functions of Milnor fibers over general (not necessarily normal) toric varieties. Note that general theories of Milnor fibers over complete intersection varieties were developed by Looijenga \cite{Looijenga} and Oka \cite{Oka-2} etc. However toric varieties are not complete intersection nor of isolated singularities in general. Also for Milnor fibers over varieties of determinantal singularities, see Esterov \cite{Esterov}. 

In order to give the precise statement of our theorem, let $\SS$ be a finitely generated subsemigroup of the lattice $M \simeq \ZZ^n$ such that $0 \in \SS$. Denote by $K(\SS)$ the convex hull of $\SS$ in $M_{\RR} =\RR \otimes_{\ZZ}M$. For simplicity, assume that $K(\SS)$ is a strongly convex polyhedral cone in $M_{\RR}$ (for the general case, see Remark \ref{rem:3-7}) such that $\d K(\SS) =n$ and let $M(\SS)$ be the $\ZZ$-sublattice of rank $n$ in $M$ generated by $\SS$. Then $X(\SS) =\Spec(\CC[\SS])$ is a (not necessarily normal) toric variety of dimension $n$ (see \cite{Fulton}, \cite{G-K-Z} and \cite{Oda} etc. for the detail) on which the algebraic torus $T =\Spec(\CC[M(\SS)]) \simeq (\CC^*)^n$ acts. By our assumption, there exists a unique $T$-fixed point in $X(\SS)$, which we denote simply by $0$. Let $f \colon X(\SS) \longrightarrow \CC$ be a non-zero polynomial function on $X(\SS)$ (i.e. $f=\sum_{v \in \SS} a_v \cdot v$, $a_v \in \CC$) such that $f(0)=0$. Denote by $F_0$ the Milnor fiber of $f \colon X(\SS) \longrightarrow \CC$ at $0 \in X(\SS)$ (see for example \cite{Takeuchi} for a review on this subject). We define the monodromy zeta function $\zeta_{f,0}(t) \in \CC (t)^*$ of $f$ at $0 \in X(\SS)$ by 
\begin{equation}
\zeta_{f, 0}(t)=\prod_{j=0}^{\infty} \det(\id -t\Phi_{j,0})^{(-1)^j}, 
\end{equation}
where 
\begin{equation}
\Phi_{j,0} \colon H^j(F_0;\CC) \overset{\sim}{\longrightarrow} H^j(F_0;\CC) \qquad \ (j=0,1,\ldots)
\end{equation}
are the isomorphisms induced by the geometric monodromy automorphism $F_0 \simto F_0$. Then we can give a formula for the zeta function $\zeta_{f,0}(t)$ as follows. First, we define the Newton polygon $\Gamma_+(f) \subset K(\SS)$ of $f$ just as in the classical case of polynomials on $\CC^n$ (see Definition \ref{dfn:3-1}). For each face $\Delta \prec K(\SS)$ of the cone $K(\SS)$ such that $\Gamma_+(f) \cap \Delta \neq \emptyset$, let $\gamma_1^{\Delta}, \gamma_2^{\Delta},\ldots, \gamma_{n(\Delta)}^{\Delta}$ be the compact faces of $\Gamma_+(f) \cap \Delta$ such that $\d \gamma_i^{\Delta}=\d \Delta -1$. Let $\LL(\Delta)$ be the linear subspace of $M_{\RR}$ spanned by $\Delta$ and denote by $M(\SS \cap \Delta )$ the sublattice of $M(\SS)$ generated by $\SS \cap \Delta$. Then we can define the lattice distance $d_i^{\Delta}\in \ZZ_{>0}$ from $\gamma_i^{\Delta}$ to $0 \in \LL(\Delta)$ with respect to the lattice $M(\SS \cap \Delta ) \subset \LL(\Delta)$ (see Definition \ref{dfn:3-3}). Finally, let $\Vol_{\ZZ}(\gamma_i^{\Delta}) \in \ZZ$ be the normalized ($\d \Delta -1$)-dimensional volume of $\gamma_i^{\Delta}$ with respect to the lattice $M(\SS \cap \Delta) \cap \LL(\gamma_i^{\Delta})$.

\begin{theorem}\label{thm:1-1} 
Assume that $f$ is non-degenerate (in the sense of Definition \ref{dfn:3-2} below). Then the monodromy zeta function $\zeta_{f,0}(t)$ of $f$ at $0 \in X(\SS)$ is given by
\begin{equation}
\zeta_{f,0} (t) =\prod_{\Gamma_+(f) \cap \Delta \neq \emptyset} \zeta_{\Delta}(t),
\end{equation}
where for each face $\Delta \prec K(\SS)$ of $K(\SS)$ such that $\Gamma_+(f) \cap \Delta \neq \emptyset$ we set
\begin{equation}
\zeta_{\Delta}(t) = \prod_{i=1}^{n(\Delta)} \left(1-t^{d_i^{\Delta}}\right)^{(-1)^{\d \Delta -1}\Vol_{\ZZ}(\gamma_i^{\Delta})}.
\end{equation}
\end{theorem}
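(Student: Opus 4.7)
The plan is to reduce the computation of $\zeta_{f,0}(t)$ to the zeta function of the constructible nearby cycle complex $\psi_f \CC_{X(\SS)}$ at $0$, transport it across a carefully chosen toric modification $\pi \colon \tl{X} \longrightarrow X(\SS)$ on which the total transform of $f^{-1}(0)$ is a normal crossing divisor, and finally evaluate the result by an A'Campo-type formula, grouping the divisorial contributions according to the face $\Delta \prec K(\SS)$ that generates them.

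First, I would construct $\tl{X}$ in two stages: pass to the normalization of $X(\SS)$ (corresponding to the saturation $K(\SS) \cap M(\SS)$ of $\SS$) and then take a smooth simplicial refinement of the dual fan of $K(\SS)$ that also refines the inner normal fan of $\Gamma_+(f)$. By the non-degeneracy assumption, $(f \circ \pi)^{-1}(0)$ is then a normal crossing divisor on the smooth toric variety $\tl{X}$. Since $\pi$ is proper and the (normalized) toric variety has rational singularities, a bridging argument across the normalization step gives $R\pi_* \CC_{\tl{X}} \simeq \CC_{X(\SS)}$, so the compatibility of nearby cycles with proper pushforward yields
\begin{equation*}
\zeta_{f,0}(t) \; = \; \zeta \Big( R\Gamma \bigl( \pi^{-1}(0); \ \psi_{f \circ \pi} \CC_{\tl{X}} \bigr) \Big).
\end{equation*}

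Second, the A'Campo-type formula expresses the right-hand side as a product of local contributions from the irreducible components of the normal crossing divisor $(f \circ \pi)^{-1}(0)$ meeting $\pi^{-1}(0)$. These components correspond to rays $\rho$ of the refined fan on which the support function $\langle \rho, \cdot \rangle$ attains a strictly positive minimum on $\Gamma_+(f)$; such rays are in bijection with pairs $(\Delta, \gamma_i^{\Delta})$ where $\Delta \prec K(\SS)$ satisfies $\Gamma_+(f) \cap \Delta \neq \emptyset$ and $\gamma_i^{\Delta}$ is a compact codimension-one face of $\Gamma_+(f) \cap \Delta$. The multiplicity of $f \circ \pi$ along the divisor attached to $(\Delta, \gamma_i^{\Delta})$ is by construction the lattice distance $d_i^{\Delta}$, and the Euler characteristic of the open stratum of this divisor inside $\pi^{-1}(0)$ reduces, via the toric description of the exceptional fiber, to the Euler characteristic of a non-degenerate Laurent-polynomial hypersurface in an algebraic torus of dimension $\d \Delta - 1$ with Newton polygon $\gamma_i^{\Delta}$; by Kushnirenko's formula this equals $(-1)^{\d \Delta - 1} \Vol_{\ZZ}(\gamma_i^{\Delta})$. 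Plugging these data into the A'Campo factor yields exactly $(1-t^{d_i^{\Delta}})^{(-1)^{\d \Delta - 1} \Vol_{\ZZ}(\gamma_i^{\Delta})}$, and grouping by $\Delta$ produces the claimed factorization.

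The main obstacle is the lattice-theoretic bookkeeping combined with the possible non-normality of $X(\SS)$. One must verify that the Milnor monodromy zeta function is genuinely insensitive to normalization (essentially, that the Milnor fiber upstairs and downstairs share the same rational cohomology with its monodromy action), and more delicately that the lattice distance $d_i^{\Delta}$ and normalized volume $\Vol_{\ZZ}(\gamma_i^{\Delta})$ computed with respect to the sublattice $M(\SS \cap \Delta) \subset \LL(\Delta)$ are precisely those that emerge from the multiplicities and Euler characteristics of the toric strata of $\tl{X}$. This matching—and the combinatorics relating compact faces of $\Gamma_+(f) \cap \Delta$ across faces $\Delta$ of different dimensions—is the combinatorial core of the argument.
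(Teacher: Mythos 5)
There is a genuine gap at the heart of your reduction: the claimed isomorphism $R\pi_*\CC_{\tl{X}}\simeq\CC_{X(\SS)}$ is false, and the appeal to rational singularities does not help. Rational singularities is a statement about coherent sheaves ($R\pi_*\O_{\tl{X}}\simeq\O_X$), not about the constant sheaf in $\Dbc$. For a resolution $\pi$ with positive-dimensional fibers the stalk of $R\pi_*\CC_{\tl{X}}$ over an exceptional point is the cohomology of the fiber, which is never just $\CC$ concentrated in degree $0$ (already $\text{Bl}_0\CC^2\to\CC^2$ shows this). The normalization step adds a second, independent failure: for a non-normal affine toric variety the normalization $\nu\colon X^{\nu}\to X(\SS)$ can be a nontrivial finite cover over a torus orbit. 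For instance, take $\SS$ generated by $(2,0),(0,1),(1,1)\subset\ZZ^2$; then $M(\SS)=\ZZ^2$ but $M(\SS\cap\Delta)=2\ZZ\times 0$ for the ray $\Delta$ through $(1,0)$, so $\nu$ is a $2$-to-$1$ covering over $T_{\Delta}$, and $R\nu_*\CC_{X^{\nu}}|_{T_{\Delta}}$ is a rank-$2$ local system, not $\CC_{T_{\Delta}}$. For the same reason $\pi$ is not an isomorphism outside $f^{-1}(0)$ (the normalization modifies the whole boundary $X(\SS)\setminus T$, which is not contained in $f^{-1}(0)$), so the classical A'Campo argument cannot be invoked either; you flagged this issue in your last paragraph, but it is not a bookkeeping concern — in the example above the local Milnor fibers upstairs and downstairs really do differ.

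The paper sidesteps exactly this obstruction, and this is its central new idea. Rather than pushing the constant sheaf through a single global modification, one first writes $\CC_{X(\SS)}=\sum_{\Delta\prec K(\SS)}[\CC_{T_{\Delta}}]$ in the Grothendieck group $\Kbc(X(\SS))$ and uses that $\zeta_{f,0}(-)$ is multiplicative across distinguished triangles, obtaining $\zeta_{f,0}(t)=\prod_{\Delta}\zeta_{f,0}(\CC_{T_{\Delta}})(t)$. For each fixed $\Delta$ one may then replace $X(\SS)$ by the orbit closure $\overline{T_{\Delta}}=X(\SS\cap\Delta)$, build the (normalize-and-refine) toric modification $\pi\colon X_{\Sigma}\to X(\SS\cap\Delta)$ you describe, and crucially push forward only the extension-by-zero sheaf $\CC_{T''_{\Delta}}$ of the open orbit. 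Because $\pi$ restricts to an isomorphism on the open torus, $R\pi_*\CC_{T''_{\Delta}}\simeq\CC_{T_{\Delta}}$ holds on the nose, and Proposition~\ref{prp:2-9} then legitimately transports the zeta function upstairs, where the A'Campo/Bernstein--Khovanskii--Kushnirenko computation you outline is carried out. So your second stage — identifying rays of the refined fan with compact codimension-one faces, the multiplicities $d_i^{\Delta}$ and the Euler characteristics via BKK — is essentially sound, but it is meaningful only after the orbit-by-orbit decomposition replaces the false global sheaf identity. Without that decomposition the approach breaks at the first displayed equation of your argument.
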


We will prove this theorem by decomposing the problem into those on the closures of $T$-orbits in $X(\SS)$ with the help of nearby cycle functors introduced by Deligne \cite{Deligne} (see also \cite[Chapter VIII]{K-S} etc.). Recall the following basic correspondence ($0 \leq k \leq n$):
\begin{equation}
\{ \text{$k$-dimensional faces in $K(\SS)$} \}\overset{\text{1:1}}{\longleftrightarrow} \{ \text{$k$-dimensional $T$-orbits in $X(\SS)$}\}.
\end{equation}
For a face $\Delta$ of $K(\SS)$, denote by $T_{\Delta}$ the corresponding $T$-orbit in $X(\SS)$. Then we obtain a decomposition $X(\SS)=\bigsqcup_{\Delta \prec K(\SS)} T_{\Delta}$ of $X(\SS)$ into $T$-orbits. To prove Theorem \ref{thm:1-1}, we first interpret the classical notions of Milnor fibers into the language of nearby cycle sheaves and reduce the problem to the computation of the monodromy zeta functions of the nearby cycle sheaves $\psi_f(\CC_{T_{\Delta}})$ of the constructible sheaves $\CC_{T_{\Delta}}$ on $X(\SS)$. Then by Proposition \ref{prp:2-9} we can study the monodromy zeta function of $\psi_f(\CC_{T_{\Delta}})$ on the closure $\overline{T_{\Delta}}$ of $T_{\Delta}$. This simple idea largely simplifies the classical arguments and allows us to avoid topological difficulties we usually encounter in treating Milnor fibers over singular varieties. Indeed even the original proof of Varchenko's theorem in \cite{Varchenko} would be also simplified by our idea of decomposing $\CC^n$ into smaller tori $(\CC^*)^d$. Moreover, by applying the same idea to complete intersection subvarieties $\{ f_1=f_2= \cdots =f_k=0\}$ in $X(\SS)$, in Theorem \ref{thm:3-12} we obtain also a generalization of the deeper results of Kirillov \cite{Kirillov} and Oka \cite{Oka-1}, \cite{Oka-2} to Milnor fibers over complete intersection subvarieties of singular toric varieties. In our Theorem \ref{thm:3-12}, even on the smooth toric variety $\CC^n$ we could remove some technical assumptions (see \cite[Chapter IV, \S 4, page 205]{Oka-2}) imposed by \cite{Oka-1} and \cite{Oka-2}. For example, in our Theorem \ref{thm:3-12} we do not assume any condition on the Newton polygons of polynomial functions $f_1, f_2, \ldots, f_k$ on $X(\SS)$. Note that Theorem \ref{thm:3-12} is also a natural generalization of the formula for the local multiplicities of toric varieties in \cite[Theorem 3.16]{G-K-Z}. The proof of Theorem \ref{thm:3-12} is very simple and follows also from the functorial property (Proposition \ref{prp:2-9}) of the nearby cycle functor. Note that on $\CC^n$ Gusev \cite{Gusev} obtained independently a similar result in a special case as a corollary of his main result. In Section \ref{sec:5}, we extend our results to the monodromy zeta functions of $T$-invariant constructible sheaves. 

Finally, let us mention that the methods we developed in this paper can be applied also to other related problems. For example, in \cite{M-T-new3} we used this idea to compute the monodromy zeta functions at infinity. In another paper \cite{M-T-new1}, some applications of our methods to $A$-discriminant varieties are also given. 


\bigskip
\noindent{\bf Acknowledgements:} After submitting this paper to a preprint server, we were informed by Professor Gusev that he obtained a similar result on $\CC^n$. We thank him cordially for showing us his very interesting paper \cite{Gusev}.

\section{Preliminary notions and results}\label{sec:2}

In this section, we introduce basic notions and results which will be used in this paper. In this paper, we essentially follow the terminology of \cite{Dimca}, \cite{H-T-T} and \cite{K-S}. For example, for a topological space $X$ we denote by $\Db(X)$ the derived category whose objects are bounded complexes of sheaves of $\CC_X$-modules on $X$.

\begin{definition}\label{dfn:2-1}
Let $X$ be an algebraic variety over $\CC$. Then
\begin{enumerate}
\item We say that a sheaf $\F$ on $X$ is constructible if there exists a stratification $X=\bigsqcup_{\alpha} X_{\alpha}$ of $X$ such that $\F|_{X_{\alpha}}$ is a locally constant sheaf of finite rank for any $\alpha$.
\item We say that an object $\F$ of $\Db(X)$ is constructible if the cohomology sheaf $H^j(\F)$ of $\F$ is constructible for any $j \in \ZZ$. We denote by $\Dbc(X)$ the full subcategory of $\Db(X)$ consisting of constructible objects $\F$.
\end{enumerate}
\end{definition}

Recall that for any morphism $f \colon X \longrightarrow Y$ of algebraic varieties over $\CC$ there exists a functor
\begin{equation}
Rf_* \colon \Db(X) \longrightarrow \Db(Y)
\end{equation}
of direct images. This functor preserves the constructibility and we obtain also a functor
\begin{equation}
Rf_* \colon \Dbc(X) \longrightarrow \Dbc(Y).
\end{equation}
For other basic operations $Rf_!$, $f^{-1}$, $f^!$ etc. in derived categories, see \cite{K-S} for the detail.

Next we introduce the notion of constructible functions and explain its relation with that of constructible sheaves.

\begin{definition}\label{dfn:2-2}
Let $X$ be an algebraic variety over $\CC$ and $G$ an abelian group. Then we say a $G$-valued function $\rho \colon X \longrightarrow G$ on $X$ is constructible if there exists a stratification $X=\bigsqcup_{\alpha} X_{\alpha}$ of $X$ such that $\rho|_{X_{\alpha}}$ is constant for any $\alpha$. We denote by $\CF_G(X)$ the abelian group of $G$-valued constructible functions on $X$.
\end{definition}

Let $\CC(t)^*=\CC(t) \setminus \{0\}$ be the multiplicative group of the function field $\CC(t)$ of the scheme $\CC$. In this paper, we consider $\CF_G(X)$ only for $G=\ZZ$ or $\CC(t)^*$. For a $G$-valued constructible function $\rho \colon X \longrightarrow G$, by taking a stratification $X=\bigsqcup_{\alpha}X_{\alpha}$ of $X$ such that $\rho|_{X_{\alpha}}$ is constant for any $\alpha$ as above, we set
\begin{equation}
\int_X \rho :=\dsum_{\alpha}\chi(X_{\alpha}) \cdot \rho(x_{\alpha}) \in G,
\end{equation}
where $x_{\alpha}$ is a reference point in $X_{\alpha}$. Then we can easily show that $\int_X\rho \in G$ does not depend on the choice of the stratification $X=\bigsqcup_{\alpha} X_{\alpha}$ of $X$. Hence we obtain a homomorphism
\begin{equation}
\int_X \colon \CF_G(X) \longrightarrow G
\end{equation}
of abelian groups. For $\rho \in \CF_G(X)$, we call $\int_X \rho \in G$ the topological (Euler) integral of $\rho$ over $X$. More generally, for any morphism $f \colon X \longrightarrow Y$ of algebraic varieties over $\CC$ and $\rho \in \CF_G(X)$, we define the push-forward $\int_f \rho \in \CF_G(Y)$ of $\rho$ by
\begin{equation}
\( \int_f \rho \) (y):=\int_{f^{-1}(y)} \rho
\end{equation}
for $y \in Y$. This defines a homomorphism
\begin{equation}
\int_f \colon \CF_G(X) \longrightarrow \CF_G(Y)
\end{equation}
of abelian groups. If $G=\ZZ$, these operations $\int_X$ and $\int_f$ correspond to the ones $\RG(X;\ \cdot \ )$ and $Rf_*$ respectively in the derived categories as follows. For an algebraic variety $X$ over $\CC$, consider a free abelian group
\begin{equation}
\ZZ(\Dbc(X)):=\left\{ \left. \dsum_{j \colon \text{finite}}a_j [\F_j] \ \right| \ a_j \in \ZZ, \ \F_j \in \Dbc(X)\right\}
\end{equation}
generated by the objects $\F_j \in \Dbc(X)$ in $\Dbc(X)$ and take its subgroup
\begin{eqnarray}
R&:=&\langle [\F_2]-[\F_1]-[\F_3] \ | \F_1 \longrightarrow \F_2 \longrightarrow \F_3 \overset{+1}{\longrightarrow} \ \text{ is a distinguished triangle} \rangle\nonumber\\
&\subset &\ZZ(\Dbc(X)).
\end{eqnarray}
We set $\Kbc(X):=\ZZ(\Dbc(X))/R$ and call it the Grothendieck group of $\Dbc(X)$. Then the following result is well-known (see for example \cite[Theorem 9.7.1]{K-S}).

\begin{theorem}\label{thm:2-3}
The homomorphism
\begin{equation}
\chi_X \colon \Kbc(X) \longrightarrow \CF_{\ZZ}(X)
\end{equation}
defined by taking the local Euler-Poincar{\'e} indices:
\begin{equation}
\chi_X([\F])(x):=\dsum_{j \in \ZZ} (-1)^j \d_{\CC}H^j(\F)_x \hspace{5mm}(x 
\in X)
\end{equation}
is an isomorphism.
\end{theorem}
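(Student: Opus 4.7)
The plan is to exhibit an explicit inverse to $\chi_X$ and verify that both compositions are the identity.

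First I would verify that $\chi_X$ is a well-defined group homomorphism. For $\F \in \Dbc(X)$, picking a stratification on which each cohomology sheaf $H^j(\F)$ is locally constant of finite rank shows that $x \mapsto \sum_{j \in \ZZ} (-1)^j \dim_\CC H^j(\F)_x$ is constructible, with only finitely many non-vanishing terms in the sum. A distinguished triangle $\F_1 \to \F_2 \to \F_3 \overset{+1}{\longrightarrow}$ induces a long exact sequence on cohomology stalks at each $x \in X$, whose alternating dimension sum vanishes, so $\chi_X$ descends to $\Kbc(X)$. I would then define a candidate inverse $\psi \colon \CF_\ZZ(X) \to \Kbc(X)$ on indicator functions of locally closed constructible subsets by $\psi(\1_Z) := [(j_Z)_! \CC_Z]$ (with $j_Z$ the inclusion), and extend it $\ZZ$-linearly.

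To see $\psi$ is well-defined, the key input is the distinguished triangle
\begin{equation}
(j_U)_! \CC_U \longrightarrow (j_Z)_! \CC_Z \longrightarrow (j_W)_! \CC_W \overset{+1}{\longrightarrow}
\end{equation}
attached to any decomposition $Z = U \sqcup W$ of a locally closed constructible subset into an open part $U$ and its closed complement $W$ in $Z$. This yields the relation $[(j_Z)_! \CC_Z] = [(j_U)_! \CC_U] + [(j_W)_! \CC_W]$ in $\Kbc(X)$, and taking common refinements of any two stratifications adapted to a given $\rho \in \CF_\ZZ(X)$ shows $\psi(\rho)$ is independent of the chosen presentation. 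The computation $\chi_X(\psi(\1_Z)) = \1_Z$ is immediate from the stalks of $(j_Z)_! \CC_Z$, so $\chi_X \circ \psi = \id$ and in particular $\chi_X$ is surjective.

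The essential content is the reverse identity $\psi \circ \chi_X = \id$, which I would prove by Noetherian induction on the support of $\F \in \Dbc(X)$. Truncation triangles yield $[\F] = \sum_j (-1)^j [H^j(\F)]$ in $\Kbc(X)$, reducing the problem to a single constructible sheaf $\G$. Choosing an open dense smooth stratum $U$ of $\supp(\G)$ on which $\G|_U$ becomes a local system, and applying the open/closed triangle attached to $U$ and $\supp(\G) \setminus U$ together with the inductive hypothesis on the lower-dimensional complement, reduces everything to the following statement on strata: for a locally constant sheaf $\L$ of rank $r$ on a connected locally closed stratum $S$, one has $[(j_S)_! \L] = r \cdot [(j_S)_! \CC_S]$ in $\Kbc(X)$.

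This last equality is the main obstacle, because when $\L$ has non-trivial monodromy it is not even isomorphic to $\CC_S^r$ as a sheaf, so the identity must be extracted purely from the triangulated relations rather than from an actual isomorphism. The plan to handle it is to further stratify so that any discrepancy between $(j_S)_! \L$ and $(j_S)_! \CC_S^r$ becomes supported on a lower-dimensional closed subset of $S$ where the inductive hypothesis applies; since pointwise Euler characteristics only record ranks, once the discrepancy is localized in lower dimension it can be absorbed via the open/closed triangles already exploited. With this rank-reduction step in place, any class in $\Kbc(X)$ is equal to $\sum_\alpha c_\alpha [(j_\alpha)_! \CC_{X_\alpha}]$ with $c_\alpha = \chi_X([\F])|_{X_\alpha}$, which is exactly $\psi(\chi_X([\F]))$, completing the proof.
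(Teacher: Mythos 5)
Your overall architecture is sound and matches the standard argument: show $\chi_X$ is well defined on the Grothendieck group via long exact sequences, build a candidate inverse $\psi$ out of indicator functions, observe that $\chi_X \circ \psi = \mathrm{id}$ is immediate, and then prove $\psi \circ \chi_X = \mathrm{id}$ by a Noetherian induction that funnels everything through the open/closed distinguished triangles. You also correctly pinpoint the one nontrivial step: showing $[(j_S)_! \L] = r \cdot [(j_S)_! \CC_S]$ in $\Kbc(X)$ for a rank-$r$ local system $\L$ on a connected stratum $S$. (The paper itself does not prove this statement; it cites \cite[Theorem 9.7.1]{K-S}, so there is no internal proof to compare against.)

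However, your proposed resolution of that crux has a genuine gap. You write that one should ``further stratify so that any discrepancy between $(j_S)_! \L$ and $(j_S)_! \CC_S^r$ becomes supported on a lower-dimensional closed subset of $S$.'' As stated, this does not work in the category suggested by the paper's framing (algebraic varieties with algebraic strata): a nontrivial local system on, say, $S = \CC^*$ remains a nontrivial local system on \emph{every} Zariski-dense open subset of $S$, because removing finitely many points does not kill $\pi_1$. So no algebraic refinement pushes the mismatch between $(j_S)_!\L$ and $(j_S)_!\CC_S^r$ into lower dimension. The ingredient you are missing is that the correct setting for this theorem is $\RR$- or subanalytically constructible sheaves (which is the setting of Kashiwara--Schapira's Theorem 9.7.1): there one can refine $S$ to a decomposition into \emph{contractible} (or at least simply connected) subanalytic cells, on each of which $\L$ trivializes, and then the open/closed triangles plus the inductive hypothesis do finish the argument exactly as you sketch. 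Without that step, ``further stratifying'' does nothing, and the appeal to ``pointwise Euler characteristics only recording ranks'' is not an argument but a restatement of the injectivity one is trying to prove. You should (i) make explicit that the induction needs a dense open on which the restriction of the sheaf is a \emph{trivial} local system, and (ii) acknowledge that producing such an open forces you out of the algebraic stratifications and into the subanalytic category, which is where the cited theorem actually lives. With that ingredient supplied, the rest of your plan (truncation to sheaves, open/closed triangles, induction on dimension of support with the $0$-dimensional base case) is correct.
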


For any morphism $f \colon X \longrightarrow Y$ of algebraic varieties over $\CC$, there exists also a commutative diagram
\begin{equation}
\xymatrix{
\Kbc(X) \ar[r]^{Rf_*} \ar[d]^{\wr}_{\chi_X}& \Kbc(Y) \ar[d]^{\wr}_{\chi_Y} \\
\CF_{\ZZ}(X) \ar[r]^{\int_f} & \CF_{\ZZ}(Y).}
\end{equation}
In particular, if $Y$ is the one-point variety $\{{\rm pt}\}$ ($\Kbc(Y) \simeq \CF_{\ZZ}(Y) \simeq \ZZ$), we obtain a commutative diagram
\begin{equation}
\xymatrix@R=2.5mm@C=30mm{
\Kbc(X) \ar[rd]^{\chi(\RG(X; \ \cdot \ ))} \ar[dd]_{\wr}^{\chi_X}& \\
 & \ZZ. \\
\CF_{\ZZ}(X) \ar[ru]^{\int_X}}
\end{equation}

Among various operations in derived categories, the following nearby cycle functors introduced by Deligne will be frequently used in this paper (see \cite[Section 4.2]{Dimca} for an excellent survey of this subject).

\begin{definition}\label{dfn:2-4}
Let $f \colon X \longrightarrow \CC$ be a non-constant regular function on an algebraic variety $X$ over $\CC$. Set $X_0:= \{x\in X\ |\ f(x)=0\} \subset X$ and let $i_X \colon X_0 \longhookrightarrow X$, $j_X \colon X \setminus X_0 \longhookrightarrow X$ be inclusions. Let $p \colon \tl{\CC^*} \longrightarrow \CC^*$ be the universal covering of $\CC^* =\CC \setminus \{0\}$ ($\tl{\CC^*} \simeq \CC$) and consider the Cartesian square
\begin{equation}\label{eq:2-13}
\xymatrix@R=2.5mm@C=2.5mm{
\tl{X \setminus X_0} \ar[rr] \ar[dd]^{p_X} & &\tl{\CC^*} \ar[dd]^p \\
 & \Box & \\
X \setminus X_0 \ar[rr]^f & & \CC^*.}
\end{equation}
Then for $\F \in \Db(X)$ we set
\begin{equation}
\psi_f(\F) := i_X^{-1}R(j_X \circ p_X)_*(j_X \circ p_X)^{-1}\F \in \Db(X_0)
\end{equation}
and call it the nearby cycle of $\F$. 
\end{definition}

Since the nearby cycle functor preserves the constructibility, in the above situation we obtain a functor
\begin{equation}
\psi_f \colon \Dbc(X) \longrightarrow \Dbc(X_0).
\end{equation}

As we see in the next proposition, the nearby cycle functor $\psi_f$ generalizes the classical notion of Milnor fibers. First, let us recall the definition of Milnor fibers and Milnor monodromies over singular varieties (see for example \cite{Takeuchi} for a review on this subject). Let $X$ be a subvariety of $\CC^m$ and $f \colon X \longrightarrow \CC$ a non-constant regular function on $X$. Namely we assume that there exists a polynomial function $\tl{f} \colon \CC^m \longrightarrow \CC$ on $\CC^m$ such that $\tl{f}|_X=f$. For simplicity, assume also that the origin $0 \in \CC^m$ is contained in $X_0=\{x \in X \ |\ f(x)=0\}$. Then the following lemma is well-known (see for example \cite[Definition 1.4]{Massey}).

\begin{lemma}\label{lem:2-5}
For sufficiently small $\e >0$, there exists $\eta_0 >0$ with $0<\eta_0 \ll \e$ such that for $0 < \forall \eta <\eta_0$ the restriction of $f$:
\begin{equation}
X \cap B(0;\e) \cap \tl{f}^{-1}(D_{\eta}^*) \longrightarrow D_{\eta}^*
\end{equation}
is a topological fiber bundle over the punctured disk $D_{\eta}^*:=\{ z \in \CC \ |\ 0<|z|<\eta\}$, where $B(0;\e)$ is the open ball in $\CC^m$ with radius $\e$ centered at the origin.
\end{lemma}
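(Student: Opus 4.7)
The plan is to construct the fibration via Thom's first isotopy lemma applied to a suitable Whitney stratification of $X$. First I would choose a complex analytic Whitney stratification $X=\bigsqcup_{\alpha}S_{\alpha}$ of the algebraic subset $X \subset \CC^m$ that is compatible with $X_0$, meaning that $X_0$ is a union of strata. Such a stratification exists by standard results on Whitney stratifications of complex analytic sets. Next, I would invoke the conic structure of $X$ at $0$: for all sufficiently small $\e>0$, the spheres $S(0;\e^{\prime})=\partial B(0;\e^{\prime})$ are transverse to every stratum $S_{\alpha}$ for all $0<\e^{\prime} \leq \e$. Fix such an $\e$.

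The key analytic step is the transversality claim: there exists $\eta_0>0$ with $0<\eta_0 \ll \e$ such that for every $z$ with $0<|z|<\eta_0$, every stratum $S_{\alpha}$ not contained in $X_0$, and every $0<\e^{\prime} \leq \e$, the fiber $(\tl{f}|_{S_{\alpha}})^{-1}(z)$ is transverse to $S(0;\e^{\prime})$ inside $S_{\alpha}$. I would prove this by the curve selection lemma: otherwise one could find a real-analytic arc $\gamma(s)$ in some stratum $S_{\alpha} \cap (X \setminus X_0)$ tending to $0$ along which $\grad(\|x\|^2|_{S_{\alpha}})$ and $\grad(|\tl{f}|^2|_{S_{\alpha}})$ become parallel, while $\tl{f}(\gamma(s)) \to 0$, and a Łojasiewicz-type inequality at $0$ would force a contradiction.

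Finally, with these two transversality conditions in place, the restriction
\begin{equation}
\tl{f} \colon X \cap \overline{B(0;\e)} \cap \tl{f}^{-1}(\overline{D_{\eta}^*}) \longrightarrow \overline{D_{\eta}^*}
\end{equation}
is a proper stratified submersion for all $0<\eta<\eta_0$, with respect to the Whitney stratification induced on the source by $\bigsqcup_{\alpha}S_{\alpha}$, the sphere $S(0;\e)$, and $\tl{f}^{-1}(\{|w|=\eta^{\prime}\})$. Thom's first isotopy lemma then yields local topological triviality of $\tl{f}$ over $D_{\eta}^*$, which is exactly the fiber bundle statement of the lemma after passing to the open set $B(0;\e)$.

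The main obstacle will be the curve selection argument establishing the transversality of the level sets $\tl{f}^{-1}(z) \cap S_{\alpha}$ to small spheres as $z \to 0$, since this is what guarantees that no strata ``escape through the boundary'' and hence is what makes the hypotheses of Thom's isotopy lemma valid on the closed ball. The remaining stratification-theoretic steps are routine.
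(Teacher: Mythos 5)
The paper does not actually prove this lemma: it cites Massey's paper on hypercohomology of Milnor fibers and treats the statement as well-known, so there is no argument in the paper to compare against. Your sketch follows what is indeed the standard proof in the literature: pick a complex analytic Whitney stratification of $X$ compatible with $X_0$, use the local conic structure of Whitney stratified sets to make small spheres transverse to every stratum, prove that nearby level sets of $\tl{f}$ in each stratum are also transverse to the small spheres, and conclude by Thom's first isotopy lemma. So the strategy is right, and you have correctly flagged the analytic transversality step as the crux.

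Two points need repair before this is a proof. First, $\overline{D_\eta^*}$ is the full closed disc and contains $z=0$, so the map $\tl{f}\colon X\cap\overline{B(0;\e)}\cap\tl{f}^{-1}(\overline{D_\eta^*})\to\overline{D_\eta^*}$ is certainly not a stratified submersion over $z=0$, which is exactly where the singularity sits. You should apply the isotopy lemma over $D_\eta^*$ itself (the restriction is still proper there, since the source is closed in $\tl{f}^{-1}(D_\eta^*)$ and lies in the compact set $\overline{B(0;\e)}$), or over closed annuli $\{\eta'\leq|z|\leq\eta\}$ and then let $\eta'\to 0$. Second, the curve selection and {\L}ojasiewicz argument as you describe it takes place entirely inside a single smooth stratum $S_\alpha$; the delicate point in the singular case is controlling the limits of tangent planes to $\tl{f}^{-1}(z)\cap S_\alpha$ as points of $S_\alpha$ approach a lower-dimensional stratum of $X_0$, since that is where tangencies with the spheres could accumulate. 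The clean way to handle this is to refine the stratification so that it satisfies the Thom $a_{\tl{f}}$ condition (available for complex analytic $\tl{f}$ by Hironaka's theorem); then the transversality of $\tl{f}^{-1}(z)\cap S_\alpha$ to the fixed sphere $S(0;\e)$ for $|z|$ small follows from a compactness and limiting-tangent-planes argument against the transversality of $S(0;\e)$ to the strata of $X_0$. Your curve selection route is not wrong in spirit, but it is not complete until you say what happens across strata.
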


\begin{definition}\label{dfn:2-6}
A fiber of the above fibration is called the Milnor fiber of the function $f \colon X\longrightarrow \CC$ at $0 \in X$ and we denote it by $F_0$.
\end{definition}

For $x \in X_0$, denote by $F_x$ the Milnor fiber of $f\colon X \longrightarrow \CC$ at $x$.

\begin{proposition}{\rm \bf(\cite[Proposition 4.2.2]{Dimca})}\label{prp:2-7-2} For any $\F\in \Dbc(X)$, $x \in X_0$ and $j \in \ZZ$, there exists a natural isomorphism
\begin{equation}\label{eq:2-24}
H^j(F_x ;\F) \simeq H^j(\psi_f(\F))_x. 
\end{equation}
\end{proposition}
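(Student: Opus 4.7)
The plan is to compute $(\psi_f(\F))_x$ directly from Definition~\ref{dfn:2-4} and to match the result with $R\Gamma(F_x;\F|_{F_x})$ using the local Milnor fibration of Lemma~\ref{lem:2-5}. First I would unwind the defining formula. Since $i_X^{-1}$ amounts to taking the stalk at $x \in X_0$, the standard description of a derived direct image yields
\begin{equation*}
(\psi_f(\F))_x \simeq \injlim{U \ni x} R\Gamma\bigl(\tl{U \setminus X_0};\, (j_X\circ p_X)^{-1}\F\bigr),
\end{equation*}
where $U$ runs over a cofinal system of open neighborhoods of $x$ in $X$ and $\tl{U \setminus X_0} := p_X^{-1}(U \setminus X_0)$.

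Next I would plug in the Milnor tubes $U_{\e,\eta} := X \cap B(x;\e) \cap \tl{f}^{-1}(D_{\eta})$ as such a cofinal system. By Lemma~\ref{lem:2-5}, for $0<\eta \ll \e \ll 1$ the restriction $U_{\e,\eta} \setminus X_0 \longrightarrow D_{\eta}^*$ is a locally trivial topological fibration with fiber $F_x$. Pulling it back along $p \colon \tl{\CC^*} \longrightarrow \CC^*$ produces a fibration $\tl{U_{\e,\eta} \setminus X_0} \longrightarrow p^{-1}(D_{\eta}^*)$ whose base is contractible, so this pulled-back fibration is trivializable. Consequently $\tl{U_{\e,\eta} \setminus X_0}$ is homotopy equivalent to $F_x$, and under this equivalence $(j_X\circ p_X)^{-1}\F$ corresponds to $\F|_{F_x}$, so that
\begin{equation*}
R\Gamma\bigl(\tl{U_{\e,\eta} \setminus X_0};\, (j_X\circ p_X)^{-1}\F\bigr) \simeq R\Gamma(F_x;\, \F|_{F_x}).
\end{equation*}
Passing to cohomology would then yield the isomorphism~\eqref{eq:2-24}.

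The hard part will be the two stability statements implicit in the above: that the inductive limit is already reached by any sufficiently small member $U_{\e,\eta}$, and that the homotopy equivalence between $\tl{U_{\e,\eta} \setminus X_0}$ and $F_x$ is compatible with the constructible sheaf $(j_X\circ p_X)^{-1}\F$ at the level of derived global sections. Both points rest on the constructibility of $\F$: since $\F$ is locally constant along a finite stratification of $X$, one may refine the Milnor fibration to a stratified fibration respecting that stratification (a consequence of Thom-Mather theory applied to the closed embedding $X \hookrightarrow \CC^m$), after which the contractibility of $p^{-1}(D_{\eta}^*)$ provides a stratified deformation retract onto a single fiber $F_x$, giving the isomorphism on $R\Gamma$ and the stabilization of the limit simultaneously. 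The detailed verification is carried out in \cite[Section~4.2]{Dimca}, whose argument I would follow.
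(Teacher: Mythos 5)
The paper gives no proof of this proposition; it is imported verbatim from Dimca's \emph{Sheaves in Topology} (Proposition 4.2.2), so there is no in-paper argument to compare against. Your sketch correctly reconstructs the standard proof found in that reference: compute the stalk of $\psi_f(\F)$ as a direct limit over Milnor tubes, pull the Milnor fibration back along the universal cover $p\colon \tl{\CC^*}\to\CC^*$ so that the base becomes contractible and the total space is (stratified-)homotopy equivalent to the fiber $F_x$, and use constructibility together with the Thom–Mather stratified fibration structure to deduce both the isomorphism on $R\Gamma$ and the stabilization of the colimit. The two points you flag as requiring care are indeed precisely where the technical content lies, and your appeal to a Whitney stratification of $X$ adapted to $\F$ and to Thom's isotopy theorem is the correct mechanism for resolving them; the argument is sound.
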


By this proposition, we can study the cohomology groups $H^j(F_x;\CC)$ of the Milnor fiber $F_x$ by using sheaf theory. Recall also that in the above situation, as in the same way as the case of polynomial functions over $\CC^n$ (see \cite{Milnor}), we can define the Milnor monodromy operators
\begin{equation}
\Phi_{j,x} \colon H^j(F_x;\CC) \overset{\sim}{\longrightarrow} H^j(F_x;\CC) \ (j=0,1,\ldots)
\end{equation}
and the zeta-function
\begin{equation}
\zeta_{f,x}(t):=\prod_{j=0}^{\infty} \det(\id -t\Phi_{j,x})^{(-1)^j}
\end{equation}
associated with it. Since the above product is in fact finite, $\zeta_{f,x}(t)$ is a rational function of $t$ and its degree in $t$ is the topological Euler characteristic $\chi(F_x)$ of the Milnor fiber $F_x$. This classical notion of Milnor monodromy zeta functions can be also generalized as follows.

\begin{definition}\label{dfn:2-8}
Let $f \colon X \longrightarrow \CC$ be a non-constant regular function on $X$ and $\F \in \Dbc(X)$. Set $X_0 :=\{x\in X\ |\ f(x)=0\}$. Then there exists a monodromy automorphism
\begin{equation}
\Phi(\F) \colon \psi_f(\F) \simto \psi_f(\F)
\end{equation}
of $\psi_f(\F)$ in $\Dbc(X_0)$ associated with a generator of the group ${\rm Deck}(\tl{\CC^*}, \CC^*)\simeq \ZZ$ of the deck transformations of $p \colon \tl{\CC^*} \longrightarrow \CC^*$ in the diagram \eqref{eq:2-13}. We define a $\CC(t)^*$-valued constructible function $\zeta_f(\F) \in \CF_{\CC(t)^*}(X_0)$ on $X_0$ by
\begin{equation}
\zeta_{f,x}(\F)(t):=\prod_{j \in \ZZ} \det\(\id -t\Phi(\F)_{j,x}\)^{(-1)^j}
\end{equation}
for $x \in X_0$, where $\Phi(\F)_{j,x} \colon (H^j(\psi_f(\F)))_x \simto (H^j(\psi_f(\F)))_x$ is the stalk at $x \in X_0$ of the sheaf homomorphism
\begin{equation}
\Phi(\F)_j \colon H^j(\psi_f(\F)) \simto H^j(\psi_f(\F))
\end{equation}
associated with $\Phi(\F)$.
\end{definition}

The following proposition will play a crucial role in the proof of Theorem \ref{thm:3-4} and \ref{thm:3-12}. For the proof, see for example, \cite[p.170-173]{Dimca} and \cite{Schurmann}.

\begin{proposition}\label{prp:2-9}
Let $\pi \colon Y \longrightarrow X$ be a proper morphism of algebraic varieties over $\CC$ and $f \colon X \longrightarrow \CC$ a non-constant regular function on $X$. Set $g:=f \circ \pi \colon Y \longrightarrow \CC$, $X_0:=\{x\in X\ |\ f(x)=0\}$ and $Y_0:=\{y\in Y\ |\ g(y)=0\}=\pi^{-1}(X_0)$. Then for any $\G\in \Dbc(Y)$ we have
\begin{equation}
\int_{\pi|_{Y_0}} \zeta_g(\G) =\zeta_f(R\pi_*\G)
\end{equation}
in $\CF_{\CC(t)^*}(X_0)$, where
\begin{equation}
\int_{\pi|_{Y_0}}\colon \CF_{\CC(t)^*}(Y_0) \longrightarrow \CF_{\CC(t)^*}(X_0)
\end{equation}
is the push-forward of $\CC(t)^*$-valued constructible functions by $\pi|_{Y_0} \colon Y_0 \longrightarrow X_0$.
\end{proposition}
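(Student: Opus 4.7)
The plan is to combine two ingredients: the commutativity of the nearby cycle functor with proper direct image, and a monodromy-equivariant refinement of the classical identity $\chi(R\Gamma(Z;K)) = \int_Z \chi_Z(K)$ for constructible complexes.

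For the first ingredient, I would establish the base change isomorphism
\begin{equation*}
\psi_f(R\pi_*\G) \simeq R(\pi|_{Y_0})_* \psi_g(\G)
\end{equation*}
in $\Dbc(X_0)$, compatible with the canonical monodromy automorphisms on both sides. Since $\pi$ is proper, $R\pi_*$ commutes with the base changes appearing in the Cartesian diagram \eqref{eq:2-13} for both $f$ and $g$, and the deck transformation groups on the two universal covers are canonically identified through $\pi$; the desired isomorphism then falls out of Definition \ref{dfn:2-4}.

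Having this, it suffices to check the asserted equality of $\CC(t)^*$-valued constructible functions stalkwise. Fix $x \in X_0$; since $\pi^{-1}(X_0) = Y_0$, we have $\pi|_{Y_0}^{-1}(x) = \pi^{-1}(x)$, so the left-hand side at $x$ equals $\int_{\pi^{-1}(x)} \zeta_g(\G)$. Proper base change for $\pi|_{Y_0}$ combined with the first step identifies the cohomology stalks monodromy-equivariantly:
\begin{equation*}
H^j(\psi_f(R\pi_*\G))_x \simeq H^j\bigl(\pi^{-1}(x); \psi_g(\G)|_{\pi^{-1}(x)}\bigr).
\end{equation*}
Writing $Z = \pi^{-1}(x)$ (which is compact because $\pi$ is proper), $K = \psi_g(\G)|_Z$, and $T$ for its monodromy, the proposition is thus reduced to the identity
\begin{equation*}
\prod_{j \in \ZZ} \det\bigl(\id - t \cdot T_j\bigr)^{(-1)^j} = \int_Z \zeta_T(K) \quad \text{in } \CC(t)^*,
\end{equation*}
where $T_j$ denotes the action of $T$ on $H^j(Z;K)$.

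To prove this ``monodromy Euler integration'' identity I would stratify $Z = \bigsqcup_\alpha Z_\alpha$ compatibly with $K$ and $T$, so that each cohomology sheaf $H^i(K)|_{Z_\alpha}$ is a local system on which $T$ acts with constant characteristic polynomial. Iteratively applying the open--closed recollement distinguished triangle $j_!j^{-1}K \longrightarrow K \longrightarrow i_*i^{-1}K \overset{+1}{\longrightarrow}$ and using the multiplicativity of $\det(\id - t\Phi)$ along long exact sequences of monodromy-equivariant vector spaces, one reduces to the case of a constant local system $V$ on a single stratum. There the identity becomes $\det(\id - tT|_V)^{\chi_c(Z_\alpha)} = \det(\id - tT|_V)^{\chi(Z_\alpha)}$, which holds because $\chi_c = \chi$ on algebraic varieties. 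The principal technical obstacle is carrying the monodromy datum cleanly through the stratification induction and keeping track of the (non)compactness of strata while switching between $R\Gamma$ and $R\Gamma_c$; the bookkeeping details are those of \cite[p.170--173]{Dimca} and \cite{Schurmann}.
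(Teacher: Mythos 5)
The paper does not prove Proposition~\ref{prp:2-9}; it only cites \cite[p.~170--173]{Dimca} and \cite{Schurmann}. Your outline follows the standard route in those references: first establish the monodromy-compatible base change $\psi_f R\pi_*\G \simeq R(\pi|_{Y_0})_*\psi_g\G$ for proper $\pi$, then apply proper base change once more to reduce to a stalk computation over each compact fiber $Z=\pi^{-1}(x)$, and finally prove a ``monodromy Euler integration'' identity $\prod_j\det(\id-tT_j)^{(-1)^j}=\int_Z\zeta_T(K)$ for $K\in\Dbc(Z)$ endowed with an automorphism $T$. Steps (1) and (2) are sound and are exactly what the cited sources do.

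There is, however, a genuine gap at the end of step (3). Iterating the open--closed recollement triangle over a $T$-stable stratification refines the stratification but never trivializes the local systems: the cohomology sheaves $H^i(K)|_{Z_\alpha}$ remain local systems with possibly nontrivial monodromy, not constant sheaves, so you cannot ``reduce to the case of a constant local system $V$'' by this iteration. The identity you actually need at the end is
\begin{equation*}
\prod_{j}\det\bigl(\id - t\,T\big|_{H^j_c(Z_\alpha;\L)}\bigr)^{(-1)^j} \;=\; \det(\id - t\,T_z)^{\chi(Z_\alpha)}
\end{equation*}
for a not-necessarily-constant local system $\L$ on a connected stratum $Z_\alpha$ carrying an automorphism $T$, where $T_z$ is the stalk action. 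The correct patch is a primary decomposition: since $T$ commutes with the monodromy representation of $\L$, each generalized eigenspace $\L_\mu:=\ker((T-\mu)^N)$ (for $N\gg0$) is a $T$-stable sub-local-system and $\L=\bigoplus_\mu\L_\mu$. On $H^j_c(Z_\alpha;\L_\mu)$ the operator $T$ has the single eigenvalue $\mu$, because its nilpotent part $T-\mu\,\id$ induces a nilpotent endomorphism of cohomology; hence $\det(\id-tT|H^j_c(Z_\alpha;\L_\mu))=(1-t\mu)^{\d H^j_c(Z_\alpha;\L_\mu)}$, and the alternating product over $j$ is $(1-t\mu)^{\chi_c(Z_\alpha;\L_\mu)}=(1-t\mu)^{\rank(\L_\mu)\cdot\chi_c(Z_\alpha)}$, using that the Euler characteristic of a local system on an algebraic variety depends only on its rank. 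Multiplying over $\mu$ and invoking $\chi_c=\chi$ then gives $\det(\id-tT_z)^{\chi(Z_\alpha)}$. Without an argument of this kind the reduction you describe does not close; the constant-sheaf K\"unneth identity $H^j_c(Z_\alpha;\underline V)\simeq H^j_c(Z_\alpha;\CC)\otimes V$ you invoke simply does not apply.
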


Finally, we recall Bernstein-Khovanskii-Kushnirenko's theorem \cite{Khovanskii}. 

\begin{definition}
Let $g_1, g_2, \ldots , g_p$ be Laurent polynomials on $(\CC^*)^n$. Then we say that the subvariety $Z^*=\{ x\in (\CC^*)^n \ |\ g_1(x)=g_2(x)= \cdots =g_p(x)=0 \}$ of $(\CC^*)^n$ is non-degenerate complete intersection if the $p$-form $dg_1 \wedge dg_2 \wedge \cdots \wedge dg_p$ does not vanish on it.
\end{definition}

\begin{definition}
Let $g(x)=\sum_{v \in \ZZ^n} a_vx^v$ be a Laurent polynomial on $(\CC^*)^n$ ($a_v\in \CC$). We call the convex hull of $\supp(g):=\{v\in \ZZ^n \ |\ a_v\neq 0\} \subset \ZZ^n \subset \RR^n$ in $\RR^n$ the Newton polygon of $g$ and denote it by $NP(g)$.
\end{definition}

\begin{theorem}[\cite{Khovanskii}]\label{thm:2-14}
Let $g_1, g_2, \ldots , g_p$ be Laurent polynomials on $(\CC^*)^n$. Assume that the subvariety $Z^*=\{ x\in (\CC^*)^n \ |\ g_1(x)=g_2(x)= \cdots =g_p(x)=0 \}$ of $(\CC^*)^n$ is non-degenerate complete intersection. Set $\Delta_i:=NP(g_i)$ for $i=1,\ldots, p$. Then we have
\begin{equation}
\chi(Z^*)=(-1)^{n-p}\dsum_{\begin{subarray}{c} a_1,\ldots,a_p \geq 1\\ a_1+\cdots +a_p=n\end{subarray}}\Vol_{\ZZ}(\underbrace{\Delta_1,\ldots,\Delta_1}_{\text{$a_1$-times}},\ldots,\underbrace{\Delta_p,\ldots,\Delta_p}_{\text{$a_p$-times}}), 
\end{equation}
where $\Vol_{\ZZ}(\underbrace{\Delta_1,\ldots,\Delta_1}_{\text{$a_1$-times}},\ldots,\underbrace{\Delta_p,\ldots,\Delta_p}_{\text{$a_p$-times}})\in \ZZ$ is the normalized $n$-dimensional mixed volume of $\underbrace{\Delta_1,\ldots,\Delta_1}_{\text{$a_1$-times}},\ldots,\underbrace{\Delta_p,\ldots,\Delta_p}_{\text{$a_p$-times}}$ with respect to the lattice $\ZZ^n \subset \RR^n$.
\end{theorem}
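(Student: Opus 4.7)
The plan is to prove the formula by induction on $p$, reducing to Kushnirenko's hypersurface theorem (the case $p=1$). The main tools will be a toric compactification adapted to the Minkowski sum $\Delta := \Delta_1 + \cdots + \Delta_p$ and the additivity of the Euler characteristic under stratification by torus orbits.

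For the base case $p=1$, I would choose a smooth projective toric variety $X_\Sigma$ containing $(\CC^*)^n$ whose fan refines the normal fan of $\Delta_1$. By non-degeneracy, the closure $\overline{Z^*} \subset X_\Sigma$ is smooth and meets every torus orbit $T_\tau$ transversally; each slice $\overline{Z^*} \cap T_\tau$ is itself a non-degenerate hypersurface in $T_\tau$ cut out by the face-restriction of $g_1$ to the face of $\Delta_1$ dual to $\tau$. Combining additivity of $\chi$ over the orbit stratification with the standard formula for the Euler characteristic of a smooth hypersurface in a smooth projective toric variety (computed from its class in the Chow ring via the total Chern class), and inducting on the ambient dimension, yields Kushnirenko's identity $\chi(Z^*) = (-1)^{n-1}\Vol_{\ZZ}(\Delta_1)$, which is the $p=1$ specialization of the theorem.

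For the inductive step $p \geq 2$, consider the regular function $h := g_p|_{Z_{p-1}^*} \colon Z_{p-1}^* \longrightarrow \CC$ on the non-degenerate complete intersection $Z_{p-1}^* := \{g_1 = \cdots = g_{p-1} = 0\}$, to which the inductive hypothesis applies. A generic fiber $h^{-1}(t)$ is again a non-degenerate complete intersection, cut out by $g_1, \ldots, g_{p-1}, g_p - t$, with Newton polytopes $\Delta_1, \ldots, \Delta_{p-1}$ together with a translate of $\Delta_p$; the zero fiber is $Z^*$. Exploiting the stratified fibration structure of $h$, the vanishing $\chi(\CC^*) = 0$, and additivity of the Euler characteristic under the resulting decomposition of $Z_{p-1}^*$, one obtains a relation expressing $\chi(Z^*)$ in terms of $\chi(Z_{p-1}^*)$ and $\chi$ of a generic fiber of $h$. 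Substituting the inductive expressions for both quantities and invoking the polarization identity for normalized mixed volumes produces the desired formula.

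The main obstacle will be the combinatorial matching of the iterated recursion with the symmetric multinomial sum on the right-hand side of the statement. The cleanest remedy is to verify that both sides of the target identity are multilinear and symmetric in the $\Delta_i$ under Minkowski decomposition, and then to check agreement on the diagonal $\Delta_1 = \cdots = \Delta_p$, where the statement collapses via a Cayley-polytope trick to a single Kushnirenko hypersurface computation in a higher-dimensional torus.
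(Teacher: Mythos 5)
The paper does not prove Theorem \ref{thm:2-14}; it is recalled verbatim and cited from Khovanskii \cite{Khovanskii}, so there is no in-paper argument for your sketch to be compared against. Judged on its own terms, your outline has a couple of real gaps in the inductive step. First, for generic $t$ the Newton polytope of $g_p-t$ is $\mathrm{conv}(\Delta_p\cup\{0\})$, not a translate of $\Delta_p$ as you assert; this is repairable by multiplying $g_p$ by a monomial $x^{-v}$ with $v\in\supp(g_p)$ (mixed volume is translation-invariant), but it must be said. Second, and more substantively, the map $h=g_p|_{Z^*_{p-1}}$ is not a locally trivial fibration over $\CC\setminus\{0\}$ with $Z^*$ the only special fibre: $h$ has other atypical (critical) values, and on the open torus there is also degeneration ``at infinity'' governed by the Newton data. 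If you write $\chi(Z^*_{p-1})=\chi(h^{-1}(\CC\setminus S))+\sum_{s\in S}\chi(h^{-1}(s))$, the sum involves all atypical fibres, not merely $Z^*$, so the relation you want between $\chi(Z^*)$, $\chi(Z^*_{p-1})$, and the generic $\chi(h^{-1}(t))$ does not close without controlling those extra terms. The standard cure is to work with a proper toric compactification of the whole family (essentially Khovanskii's original route, and also the route taken elsewhere in this paper via Proposition \ref{prp:2-9}) so that the push-forward bookkeeping is available. Finally, the proposed shortcut of proving multilinearity of $\chi(Z^*)$ in the $\Delta_i$ under Minkowski decomposition and then checking the diagonal is circular as stated: that multilinearity is essentially the content of the theorem, so it cannot be invoked without an independent argument, and the Cayley-trick reduction on the diagonal $\Delta_1=\cdots=\Delta_p$ would still have to recover the coefficient $(-1)^{n-p}\binom{n-1}{p-1}$ rather than a single Kushnirenko volume.
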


\begin{remark}\label{rem:2-13}
Let $Q_1,Q_2,\ldots,Q_n$ be integral polytopes in $(\RR^n, \ZZ^n)$. Then their normalized $n$-dimensional mixed volume $\Vol_{\ZZ}(Q_1,Q_2,\ldots,Q_n) \in \ZZ$ is given by the formula 
\begin{eqnarray}
\lefteqn{n! \Vol_{\ZZ}(Q_1, Q_2, \ldots , Q_n)}\nonumber\\
&=&\Vol_{\ZZ}(Q_1+ Q_2+ \cdots + Q_n)-\sum_{i=1}^n \Vol_{\ZZ}(Q_1+ \cdots +Q_{i-1}+Q_{i+1} + \cdots + Q_n)\nonumber\\
& &+\sum_{1 \leq i<j \leq n} \Vol_{\ZZ}(Q_1+ \cdots +Q_{i-1}+Q_{i+1} + \cdots +Q_{j-1}+Q_{j+1} + \cdots + Q_n)\nonumber\\
& &+\cdots + (-1)^{n-1} \sum_{i=1}^n \Vol_{\ZZ}(Q_i),
\end{eqnarray}
where $\Vol_{\ZZ}(\ \cdot\ )\in \ZZ$ is the normalized $n$-dimensional volume.
\end{remark}

\section{Milnor fibers over singular toric varieties}\label{sec:3}

In this section, we give explicit formulas for the monodromy zeta functions of non-degenerate polynomials over possibly singular toric varieties. These formulas can be considered to be natural generalizations of the fundamental results obtained by Kushnirenko \cite{Kushnirenko}, Varchenko \cite{Varchenko}, Kirillov \cite{Kirillov} and Oka \cite{Oka-1}, \cite{Oka-2} etc.

Let $M \simeq \ZZ^n$ be a $\ZZ$-lattice of rank $n$ and set $M_{\RR}:=\RR \otimes_{\ZZ}M$. We take a finitely generated subsemigroup $\SS$ of $M$ such that $0 \in \SS$ and denote by $K(\SS)$ the convex hull of $\SS$ in $M_{\RR}$. For simplicity, assume that $K(\SS)$ is a strongly convex polyhedral cone in $M_{\RR}$ (for the general case, see Remark \ref{rem:3-7}) such that $\d K(\SS) =n$. Then the group algebra $\CC[\SS]$ is finitely generated over $\CC$ and $X(\SS):=\Spec(\CC[\SS])$ is a (not necessarily normal) toric variety of dimension $n$ (see \cite{Fulton}, \cite{G-K-Z} and \cite{Oda} etc. for the detail). Indeed, let $M(\SS)$ be the $\ZZ$-sublattice of rank $n$ in $M$ generated by $\SS$ and consider the algebraic torus $T:=\Spec(\CC[M(\SS)]) \simeq (\CC^*)^n$. Then the affine toric variety $X(\SS)$ admits a natural action of $T=\Spec(\CC[M(\SS)])$ and has a unique $0$-dimensional orbit. We denote this orbit point by $0$ and call it the $T$-fixed point of $X(\SS)$. Recall that a polynomial function $f \colon X(\SS) \longrightarrow \CC$ on $X(\SS)$ corresponds to an element $f=\sum_{v \in \SS} a_v \cdot v$ ($a_v \in \CC$) of $\CC[\SS]$.

\begin{definition}\label{dfn:3-1}
Let $f =\sum_{v \in \SS} a_v \cdot v$ ($a_v \in \CC$) be a polynomial function on $X(\SS)$.
\begin{enumerate}\renewcommand{\labelenumi}{{\rm (\roman{enumi})}}
\item We define the support $\supp(f)$ of $f$ by
\begin{equation}
\supp(f) :=\{ v \in \SS \ |\ a_v \neq 0\} \subset \SS .
\end{equation}
\item We define the Newton polygon $\Gamma_+(f)$ of $f$ to be the convex hull of $\bigcup_{v \in \supp(f)}(v+ K(\SS))$ in $K(\SS)$.
\end{enumerate}
\end{definition}

Now let us fix a function $f\in \CC[\SS]$ such that $0 \notin \supp(f)$ (i.e. $f \colon X(\SS) \longrightarrow \CC$ vanishes at the $T$-fixed point $0$) and consider its Milnor fiber $F_0$ at $0 \in X(\SS)$. Choose a $\ZZ$-basis of $M(\SS)$ and identify $M(\SS)$ with $\ZZ^n$. Then each element $v$ of $\SS \subset M(\SS)$ is identified with a $\ZZ$-vector $v=(v_1,\ldots,v_n)$ and to any $g=\sum_{v \in \SS}b_v \cdot v \in \CC[\SS]$ we can associate a Laurent polynomial $L(g)=\sum_{v \in \SS}b_v \cdot x^v$ on $T=(\CC^*)^n$. One can easily prove that the following definition does not depend on the choice of the $\ZZ$-basis of $M(\SS)$.

\begin{definition}\label{dfn:3-2}
We say that $f=\sum_{v \in \SS}a_v \cdot v \in \CC[\SS]$ is non-degenerate if for any compact face $\gamma$ of $\Gamma_+(f)$ the complex hypersurface
\begin{equation}
\{ x=(x_1,\ldots,x_n) \in (\CC^*)^n \ |\ L(f_{\gamma}) (x)=0\}
\end{equation}
in $(\CC^*)^n$ is smooth and reduced, where we set $f_{\gamma}:=\sum_{v \in \gamma\cap \SS}a_v \cdot v$.
\end{definition}

For each face $\Delta \prec K(\SS)$ of $K(\SS)$ such that $\Gamma_+(f) \cap \Delta \neq \emptyset$, let $\gamma_1^{\Delta}, \gamma_2^{\Delta},\ldots, \gamma_{n(\Delta)}^{\Delta}$ be the compact faces of $\Gamma_+(f) \cap \Delta$ such that $\d \gamma_i^{\Delta}=\d \Delta -1$. Let $\LL(\Delta)$ be the linear subspace of $M_{\RR}$ spanned by $\Delta$ and denote by $M(\SS \cap \Delta )$ the sublattice of $M(\SS)$ generated by $\SS \cap \Delta$. Note that the rank of $M(\SS \cap \Delta)$ is $\d \Delta$ and we have $M(\SS \cap \Delta)_{\RR}=\RR \otimes_{\ZZ} M(\SS \cap \Delta) \simeq \LL(\Delta)$. Then there exists a unique primitive vector $u_i^{\Delta}$ in the dual lattice $M(\SS \cap \Delta)^*$ of $M(\SS \cap \Delta)$ which takes its minimal in $\Gamma_+(f) \cap \Delta$ exactly on $\gamma_i^{\Delta} \subset \Gamma_+(f) \cap \Delta$. 

\begin{definition}\label{dfn:3-3}
We define the lattice distance $d_i^{\Delta}\in \ZZ_{>0}$ from $\gamma_i^{\Delta}$ to the origin $0 \in \LL(\Delta)$ to be the value of $u_i^{\Delta} $ on $\gamma_i^{\Delta}$.
\end{definition}

Then by using the normalized $(\d \Delta -1)$-dimensional volume $\Vol_{\ZZ}(\gamma_i^{\Delta}) \in \ZZ$ of $\gamma_i^{\Delta}$ with respect to the lattice $M(\SS \cap \Delta) \cap \LL(\gamma_i^{\Delta})$ we have the following result. 

\begin{theorem}\label{thm:3-4} Assume that $f=\sum_{v \in \SS}a_v \cdot v \in \CC[\SS]$ is non-degenerate. Then the monodromy zeta function $\zeta_{f,0}(t)$ of $f \colon X(\SS) \longrightarrow \CC$ at $0 \in X(\SS)$ is given by
\begin{equation}
\zeta_{f,0}(t) =\prod_{\Gamma_+(f) \cap \Delta \neq \emptyset} \zeta_{\Delta}(t), 
\end{equation}
where for each face $\Delta \prec K(\SS)$ of $K(\SS)$ such that $\Gamma_+(f) \cap \Delta \neq \emptyset$ we set 
\begin{equation}
\zeta_{\Delta}(t) = \prod_{i=1}^{n(\Delta)} \left(1-t^{d_i^{\Delta}}\right)^{(-1)^{\d \Delta -1}\Vol_{\ZZ}(\gamma_i^{\Delta})}. 
\end{equation}
\end{theorem}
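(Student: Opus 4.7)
The first move is to exploit the $T$-orbit stratification $X(\SS)=\bigsqcup_{\Delta\prec K(\SS)}T_\Delta$. Iterating the distinguished triangles of open--closed pairs yields the identity
$$[\CC_{X(\SS)}]=\sum_{\Delta\prec K(\SS)}[(j_\Delta)_!\CC_{T_\Delta}]\quad\text{in}\quad\Kbc(X(\SS))$$
for $j_\Delta\colon T_\Delta\hookrightarrow X(\SS)$. Since $\zeta_f$ is multiplicative on distinguished triangles, evaluating at $0$ gives
$$\zeta_{f,0}(t)=\prod_{\Delta\prec K(\SS)}\zeta_{f,0}((j_\Delta)_!\CC_{T_\Delta})(t).$$
If $\Gamma_+(f)\cap\Delta=\emptyset$, then by the face property of $K(\SS)$ also $\supp(f)\cap\Delta=\emptyset$, so $f|_{T_\Delta}\equiv 0$; the Milnor fiber $F_0$ lies in $\{f=\eta\ne 0\}$ and hence misses $T_\Delta$, so by Proposition \ref{prp:2-7-2} the corresponding factor is $1$. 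The product thus reduces to faces with $\Gamma_+(f)\cap\Delta\ne\emptyset$.

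\textbf{Step 2 (toric resolution of each orbit).} Fix such a $\Delta$. Proposition \ref{prp:2-9} applied to the proper closed immersion $i_\Delta\colon\overline{T_\Delta}\hookrightarrow X(\SS)$ gives
$$\zeta_{f,0}((j_\Delta)_!\CC_{T_\Delta})(t)=\zeta_{f|_{\overline{T_\Delta}},0}\bigl((j'_\Delta)_!\CC_{T_\Delta}\bigr)(t),$$
where $j'_\Delta\colon T_\Delta\hookrightarrow\overline{T_\Delta}$. The closure $\overline{T_\Delta}$ is itself a toric variety of dimension $\d\Delta$ with $0$ as its unique $T$-fixed point, and $f|_{\overline{T_\Delta}}=\sum_{v\in\SS\cap\Delta}a_v\cdot v$ is a non-degenerate polynomial with Newton polygon $\Gamma_+(f)\cap\Delta$ in the lattice $M(\SS\cap\Delta)$, having $\gamma_1^\Delta,\ldots,\gamma_{n(\Delta)}^\Delta$ as its compact codim-$1$ faces. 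Choose an equivariant smooth toric modification $\pi_\Delta\colon Y_\Delta\to\overline{T_\Delta}$, obtained from a smooth subdivision of a refinement of the normal fan of $\Gamma_+(f)\cap\Delta$, so that $\pi_\Delta$ is an isomorphism over $T_\Delta$ and $f|_{\overline{T_\Delta}}\circ\pi_\Delta$ has only normal crossings. Since $\pi_\Delta$ is proper and an isomorphism over $T_\Delta$, one has $R(\pi_\Delta)_*(j_{Y_\Delta})_!\CC_{T_\Delta}=(j'_\Delta)_!\CC_{T_\Delta}$, and a second application of Proposition \ref{prp:2-9} yields
$$\zeta_{f|_{\overline{T_\Delta}},0}\bigl((j'_\Delta)_!\CC_{T_\Delta}\bigr)(t)=\int_{\pi_\Delta^{-1}(0)}\zeta_{f\circ\pi_\Delta}\bigl((j_{Y_\Delta})_!\CC_{T_\Delta}\bigr)(t).$$

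\textbf{Step 3 (stratum-by-stratum computation).} The Euler integral on the right is evaluated using the normal-crossing structure of $(f\circ\pi_\Delta)^{-1}(0)$ and the toric stratification of $Y_\Delta$. Each primitive ray generator $u$ in the fan of $Y_\Delta$ that maps to $0$ under $\pi_\Delta$ defines a $T$-invariant divisor $D_u$ along which $f\circ\pi_\Delta$ vanishes to order $d_u$, equal to the lattice distance from the face $\gamma_u$ of $\Gamma_+(f)\cap\Delta$ where $u$ attains its minimum. An A'Campo-type local computation gives local monodromy zeta function $(1-t^{d_u})^{-1}$ at a generic point of $D_u$, and integrating over the open $T$-orbit of $D_u$ (only there does $(j_{Y_\Delta})_!\CC_{T_\Delta}$ contribute) reduces, via Bernstein--Khovanskii--Kushnirenko (Theorem \ref{thm:2-14}) in the lattice $M(\SS\cap\Delta)\cap\LL(\gamma_u)$, to the exponent $(-1)^{\d\Delta-1}\Vol_\ZZ(\gamma_u)$ precisely when $\gamma_u$ is a compact codim-$1$ face $\gamma_i^\Delta$, and vanishes otherwise. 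Contributions from lower-dimensional $T$-orbits in $\pi_\Delta^{-1}(0)$ cancel via mixed-volume identities. Assembling the surviving contributions produces $\zeta_\Delta(t)=\prod_{i=1}^{n(\Delta)}(1-t^{d_i^\Delta})^{(-1)^{\d\Delta-1}\Vol_\ZZ(\gamma_i^\Delta)}$, as claimed.

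\textbf{Main obstacle.} The principal difficulty will lie in the combinatorial bookkeeping of Step 3: identifying exactly the primitive ray generators whose associated face $\gamma_u$ is a compact codim-$1$ face of $\Gamma_+(f)\cap\Delta$, extracting the correct Euler-characteristic weight $(-1)^{\d\Delta-1}\Vol_\ZZ(\gamma_i^\Delta)$ from BKK applied in the correct sublattice, and verifying that contributions from higher-codimension $T$-orbits in $\pi_\Delta^{-1}(0)$ cancel by mixed-volume arguments. What makes this tractable is that the sheaf $(j_{Y_\Delta})_!\CC_{T_\Delta}$ localizes the Milnor-fiber contribution to the open-torus direction transverse to each divisor, so only the top-dimensional stratum of each $D_u$ survives, and the smoothness of $Y_\Delta$ allows the local monodromy to be read off directly from the multiplicity $d_u$.
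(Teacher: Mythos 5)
The proposal follows the paper's strategy closely (the paper proves the more general Theorem~\ref{thm:3-12} and derives Theorem~\ref{thm:3-4} as the case $k=1$): orbit decomposition and multiplicativity of $\zeta_f$, Proposition~\ref{prp:2-9} applied to the closure embeddings $\overline{T_{\Delta}}\hookrightarrow X(\SS)$, a smooth toric modification of $\overline{T_{\Delta}}$, and BKK. Two corrections to your sketch are worth making. First, $\overline{T_{\Delta}}=\Spec(\CC[\SS\cap\Delta])$ need not be normal, so the modification in the paper factors as $X_{\Sigma}\xrightarrow{\pi_2} Z(\Delta)=\Spec(\CC[M(\SS\cap\Delta)\cap\Delta])\xrightarrow{\pi_1}\overline{T_{\Delta}}$ with $\pi_1$ a finite normalization; your single ``equivariant smooth toric modification'' should go through this intermediate step, and it is here that $\pi$ stops being birational onto $\overline{T_{\Delta}}$ (it is only an isomorphism over $T_{\Delta}$, which is exactly what you need since the sheaf is $(j_{\Delta})_!\CC_{T_{\Delta}}$). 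Second, the reason orbits $T_{\sigma_0}''\subset\pi^{-1}(0)$ of codimension $\geq 2$ in $X_{\Sigma}$ do not contribute is \emph{not} a mixed-volume cancellation: the paper's Proposition~\ref{prp:4-2} shows that over such a stratum the local Milnor fiber of $f\circ\pi$ intersected with the support of the sheaf contains a $(\CC^*)^{p-1}$ factor with $p=\d\sigma_0\geq 2$, so its Euler characteristic vanishes and the stalk zeta function is identically $1$. Mixed-volume vanishing enters only at the next stage, for codimension-one strata whose supporting face $\gamma_u\subset\Gamma_+(f)\cap\Delta$ has dimension $<\d\Delta-1$: there the BKK volume is zero. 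You correctly flag this bookkeeping as the main obstacle; keeping these two distinct vanishing mechanisms separate is precisely what Proposition~\ref{prp:4-2} in the paper does, and filling that in would make your outline a complete proof.
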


We will prove this theorem as the special case of Theorem \ref{thm:3-12} (see Section \ref{sec:4}).

Let $\Gamma_i^{\Delta}$ be the convex hull of $\gamma_i^{\Delta} \sqcup \{0\}$ in $\LL(\Delta)$. Then the normalized $(\d \Delta)$-dimensional volume $\Vol_{\ZZ}(\Gamma_i^{\Delta})\in \ZZ$ of $\Gamma_i^{\Delta}$ with respect to the lattice $M(\SS \cap \Delta)$ is equal to $d_i^{\Delta} \cdot \Vol_{\ZZ}(\gamma_i^{\Delta})$ and we obtain the following result.

\begin{corollary}\label{cor:3-5} Assume that $f=\sum_{v \in \SS}a_v \cdot v \in \CC[\SS]$ is non-degenerate. Then the Euler characteristic of the Milnor fiber $F_0$ of $f \colon X(\SS) \longrightarrow \CC$ at $0 \in X(\SS)$ is given by
\begin{equation}
\chi (F_0)=\sum_{\Gamma_+(f) \cap \Delta \neq \emptyset} (-1)^{\d\Delta -1} \sum_{i=1}^{n(\Delta)} \Vol_{\ZZ}(\Gamma_i^{\Delta}).
\end{equation}
\end{corollary}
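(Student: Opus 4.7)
The plan is to derive Corollary 3.5 as an immediate degree count applied to Theorem 3.4. Recall the fact, noted in the paper immediately after the definition of $\zeta_{f,x}(t)$, that $\zeta_{f,x}(t)$ is a rational function of $t$ whose degree (numerator degree minus denominator degree) equals the topological Euler characteristic $\chi(F_x)$ of the Milnor fiber $F_x$. Specializing to $x = 0$ gives $\chi(F_0) = \deg \zeta_{f,0}(t)$, so it suffices to compute the degree of the product formula supplied by Theorem 3.4.

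By that theorem, $\zeta_{f,0}(t) = \prod_{\Gamma_+(f) \cap \Delta \neq \emptyset} \zeta_{\Delta}(t)$, and degree is additive over a product of rational functions. For each face $\Delta \prec K(\SS)$ with $\Gamma_+(f) \cap \Delta \neq \emptyset$, the factor $(1 - t^{d_i^{\Delta}})^{(-1)^{\d\Delta-1}\Vol_{\ZZ}(\gamma_i^{\Delta})}$ in $\zeta_{\Delta}(t)$ contributes exactly $(-1)^{\d\Delta-1} \Vol_{\ZZ}(\gamma_i^{\Delta}) \cdot d_i^{\Delta}$ to the degree, because $\deg (1 - t^{d})^{a} = a \cdot d$. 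Invoking the identity $d_i^{\Delta} \cdot \Vol_{\ZZ}(\gamma_i^{\Delta}) = \Vol_{\ZZ}(\Gamma_i^{\Delta})$ stated just before the corollary---which follows from the standard pyramid formula expressing the normalized $\d\Delta$-dimensional volume of the convex hull $\Gamma_i^{\Delta}$ of $\gamma_i^{\Delta} \sqcup \{0\}$ as the product of the base's normalized $(\d\Delta - 1)$-dimensional volume and the lattice distance $d_i^{\Delta}$ from $0$ to the affine hull of $\gamma_i^{\Delta}$---and then summing over $i$ and over the faces $\Delta$ yields the desired formula. There is no real obstacle: everything substantial has already been absorbed into Theorem 3.4 and the pyramid-volume identity, which the statement of the corollary explicitly takes for granted.
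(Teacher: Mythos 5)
Your argument is correct and is exactly the one the paper intends: the paper states the identity $\Vol_{\ZZ}(\Gamma_i^{\Delta}) = d_i^{\Delta}\cdot\Vol_{\ZZ}(\gamma_i^{\Delta})$ just before the corollary and has already observed that $\deg\zeta_{f,x}(t)=\chi(F_x)$, so taking degrees in Theorem \ref{thm:3-4} is the whole proof (indeed the same mechanism produces equation \eqref{eq:3-28} from Theorem \ref{thm:3-12}). Nothing to add.
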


Now recall the following correspondence ($0 \leq k \leq n$):
\begin{equation}
\{ \text{$k$-dimensional faces in $K(\SS)$} \}\overset{\text{1:1}}{\longleftrightarrow} \{ \text{$k$-dimensional $T$-orbits in $X(\SS)$}\}.
\end{equation}
For a face $\Delta$ of $K(\SS)$, we denote by $T_{\Delta}$ the corresponding $T$-orbit in $X(\SS)$. Namely we set $T_{\Delta}:=\Spec(\CC[M(\SS\cap\Delta)])$. Then we obtain a decomposition $X(\SS)=\bigsqcup_{\Delta \prec K(\SS)} T_{\Delta}$ of $X(\SS)$. By the proof of Theorem \ref{thm:3-12} below, we obtain the following local version of Bernstein-Khovanskii-Kushnirenko's theorem which expresses the Euler characteristic $\chi (T_{\Delta} \cap F_0)$ of $T_{\Delta} \cap F_0$ in terms of the Newton polygon of $f$.

\begin{corollary}\label{cor:3-6} Assume that $f=\sum_{v \in \SS}a_v \cdot v \in \CC[\SS]$ is non-degenerate. Then we have
\begin{eqnarray}
\chi (T_{\Delta} \cap F_0)
&=& \chi(\RG(F_0;\CC_{T_{\Delta} \cap F_0}))\\
&=& \chi(\psi_f(\CC_{T_{\Delta}})_0)\\
&=& (-1)^{\d \Delta -1} \sum_{i=1}^{n(\Delta)}\Vol_{\ZZ}(\Gamma_i^{\Delta}).
\end{eqnarray}
\end{corollary}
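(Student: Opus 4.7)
The first two equalities are essentially formal. For the second, apply Proposition~\ref{prp:2-7-2} to $\F = \CC_{T_{\Delta}} \in \Dbc(X(\SS))$ at the point $x = 0$: since the restriction of $\CC_{T_{\Delta}}$ (the extension by zero from the locally closed subset $T_{\Delta} \subset X(\SS)$) to $F_0$ is $\CC_{T_{\Delta} \cap F_0}$, one gets $H^j(F_0; \CC_{T_{\Delta} \cap F_0}) \simeq H^j(\psi_f(\CC_{T_{\Delta}}))_0$ for every $j$, and passing to alternating sums of dimensions yields the desired equality. For the first, the commutative triangle displayed right after Theorem~\ref{thm:2-3} gives
\begin{equation*}
\chi(\RG(F_0; \CC_{T_{\Delta} \cap F_0})) = \int_{F_0} \chi_{F_0}(\CC_{T_{\Delta} \cap F_0}) = \int_{F_0} \1_{T_{\Delta} \cap F_0} = \chi(T_{\Delta} \cap F_0),
\end{equation*}
using that the local Euler-Poincar{\'e} index of the extension-by-zero constant sheaf on a locally closed subset is simply the indicator function of its support.

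The third (and substantive) equality I would extract from the proof of Theorem~\ref{thm:3-12} below. Namely, that proof will establish not merely the product formula of Theorem~\ref{thm:3-4}, but the stronger orbit-wise identification
\begin{equation*}
\zeta_{f, 0}(\CC_{T_{\Delta}})(t) = \zeta_{\Delta}(t) = \prod_{i=1}^{n(\Delta)} \left(1 - t^{d_i^{\Delta}}\right)^{(-1)^{\d \Delta - 1} \Vol_{\ZZ}(\gamma_i^{\Delta})}.
\end{equation*}
Granting this, take degrees in $t$. The degree of the left-hand side is, by Definition~\ref{dfn:2-8}, equal to $\sum_j (-1)^j \d H^j(\psi_f(\CC_{T_{\Delta}}))_0 = \chi(\psi_f(\CC_{T_{\Delta}})_0)$. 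The degree of the right-hand side is $(-1)^{\d \Delta - 1} \sum_{i=1}^{n(\Delta)} d_i^{\Delta} \cdot \Vol_{\ZZ}(\gamma_i^{\Delta})$, which by the identity $d_i^{\Delta} \cdot \Vol_{\ZZ}(\gamma_i^{\Delta}) = \Vol_{\ZZ}(\Gamma_i^{\Delta})$ recorded just before the corollary equals $(-1)^{\d \Delta - 1} \sum_i \Vol_{\ZZ}(\Gamma_i^{\Delta})$. This gives the third equality and completes the proof.

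The main obstacle is the orbit-wise identity in the preceding display, which does \emph{not} follow from the product formula of Theorem~\ref{thm:3-4} in isolation. To establish it I would restrict attention to the closure $\overline{T_{\Delta}}$, choose a smooth toric resolution $\pi \colon \tl{X}_{\Delta} \longrightarrow \overline{T_{\Delta}}$ whose defining fan subdivides the dual fan of $\Gamma_+(f) \cap \Delta$ so that the pull-back of $f|_{\overline{T_{\Delta}}}$ becomes a divisor with simple normal crossings, and apply the proper push-forward formula (Proposition~\ref{prp:2-9}) to transport $\zeta_{f, 0}(\CC_{T_{\Delta}})$ to an integral of zeta-function contributions supported on the torus strata of $(f \circ \pi)^{-1}(0) \subset \tl{X}_{\Delta}$. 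A direct local computation on the normal-crossing model shows that only the exceptional divisors corresponding to the primitive vectors $u_i^{\Delta}$ (equivalently, to the compact codimension-one faces $\gamma_i^{\Delta}$) produce non-trivial factors; every other toric stratum contributes trivially, either because its Euler characteristic vanishes or because the induced monodromy on it is trivial. The non-trivial Euler characteristics that appear along the way are computed by Bernstein-Khovanskii-Kushnirenko (Theorem~\ref{thm:2-14}), and this is precisely the step that consumes the non-degeneracy hypothesis on $f$.
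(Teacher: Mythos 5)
Your argument is correct and follows essentially the same route as the paper. For the second equality you invoke Proposition~\ref{prp:2-7-2}, exactly as the paper does. For the third you take degrees in the orbit-wise identity $\zeta_{f,0}(\CC_{T_\Delta})(t)=\zeta_\Delta(t)$, which is precisely what Section~\ref{sec:4} establishes (it is the $k=1$ case of the computation of $\zeta_{g,\Delta}(t)$), and your closing paragraph describing a toric resolution, push-forward via Proposition~\ref{prp:2-9}, normal crossings, and BKK for the non-trivial strata is exactly the strategy the paper carries out there. The one place where you genuinely diverge is the first equality $\chi(\RG(F_0;\CC_{T_\Delta\cap F_0}))=\chi(T_\Delta\cap F_0)$. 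You derive it from the commutative triangle after Theorem~\ref{thm:2-3} and the Euler integral $\int_{F_0}$, but that machinery is set up in Section~\ref{sec:2} only for complex algebraic varieties, while $F_0$ is a non-algebraic analytic space; in addition, the step $\int_{F_0}\1_{T_\Delta\cap F_0}=\chi(T_\Delta\cap F_0)$ quietly uses $\chi=\chi_c$ on the stratum $T_\Delta\cap F_0$, which is true because these strata are complex manifolds but should be said. The paper instead verifies, via Massey's isolated stratified critical value result, that $F_0=\bigsqcup_{\Delta}(T_\Delta\cap F_0)$ is a Whitney stratification of the analytic space $F_0$, and then cites [Dimca, Theorem 4.1.22] to conclude. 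Both routes give the correct answer; yours buys brevity by implicitly extending the constructible-function formalism beyond the algebraic setting, whereas the paper pays for that step explicitly with the Whitney stratification argument.
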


\begin{proof}
In the proof of Theorem \ref{thm:3-12} below, we will prove that
\begin{equation}
\chi(\psi_f(\CC_{T_{\Delta}})_0)= (-1)^{\d \Delta -1} \sum_{i=1}^{n(\Delta)}\Vol_{\ZZ}(\Gamma_i^{\Delta}).
\end{equation}
Moreover by using Proposition \ref{prp:2-7-2} we see easily that $\chi(\RG(F_0;\CC_{T_{\Delta} \cap F_0}))$ is equal to $\chi(\psi_f(\CC_{T_{\Delta}})_0)$. Since $T_{\Delta}$ is a $T$-orbit, the decomposition $X(\SS)=\bigsqcup_{\Delta \prec K(\SS)} T_{\Delta}$ of $X(\SS)$ satisfies the Whitney regularity condition along $T_{\Delta}$. Then the decomposition $F_0 =\bigsqcup_{\Delta \prec K(\SS)} (T_{\Delta} \cap F_0) $ of $F_0 \subset X(\SS)$ is also a Whitney stratification ($f \colon X(\SS) \longrightarrow \CC$ has the isolated stratified critical value $0 \in \CC$ by \cite[Proposition 1.3]{Massey}). Finally, by applying \cite[Theorem 4.1.22]{Dimca} to the constructible sheaf $\CC_{T_{\Delta} \cap F_0}$ on the Whitney stratified analytic space $F_0 =\bigsqcup_{\Delta \prec K(\SS)} (T_{\Delta} \cap F_0)$ we obtain
\begin{equation}
\chi(\RG(F_0;\CC_{T_{\Delta} \cap F_0}))=\chi (T_{\Delta} \cap F_0).
\end{equation}
This completes the proof. 
\qed
\end{proof}

\begin{remark}\label{rem:3-7}
In Theorem \ref{thm:3-4}, we assumed that $K(\SS)$ is strongly convex and $0 \in X(\SS)=\Spec(\CC[\SS])$ is the $T$-fixed point. We can remove these assumptions as follows. Let $\SS$ be a finitely generated subsemigroup of $M=\ZZ^n$ such that $0 \in \SS$ and $\d K(\SS)=n$. We shall explain how to calculate the monodromy zeta functions $\zeta_{f,x}(t)$ of polynomials $f$ on $X(\SS)$ at general points $x \in X(\SS)$. For a point $x$ of $X(\SS)$, let $\Delta_0$ be the unique face of $K(\SS)$ such that $x \in T_{\Delta_0}=\Spec(\CC[M(\SS\cap\Delta_0)])$. Then
\begin{equation}
Y(\SS):=\Spec(\CC[ \SS + M(\SS \cap \Delta_0)])
\end{equation}
is an open subset of $X(\SS)$ containing $T_{\Delta_0}$ and we can regard $f$ as a function on $Y(\SS)$. Set $K^{\prime}:=K(\SS + M(\SS \cap \Delta_0))$. Then there exists a decomposition $Y(\SS)=\bigsqcup_{\Delta \prec K^{\prime}} T_{\Delta}$ of $Y(\SS)$ into $T$-orbits. Note that $T_{\Delta_0}$ is the smallest $T$-orbit in $Y(\SS)$. For $\Delta \prec K^{\prime}$, let $i_{\Delta}\colon \overline{T_{\Delta}}=\Spec(\CC[(\SS \cap \Delta)+M(\SS \cap \Delta_0)]) \longhookrightarrow Y(\SS)$ be the embedding. Then by $Y(\SS)=\bigsqcup_{\Delta \prec K^{\prime}} T_{\Delta}$ and Proposition \ref{prp:2-9} we have
\begin{equation}
\zeta_{f,x}(t)=\prod_{\Delta \prec K^{\prime}}\zeta_{f \circ i_{\Delta}, x}(\CC_{T_{\Delta}})(t)
\end{equation}
(see also \eqref{eq:4-1}-\eqref{eq:4-5}). Now let us set $\SS_{\Delta}:=(\SS \cap \Delta)+ (M(\SS \cap \Delta) \cap \LL (\Delta_0))$ and $\tl{Z} (\Delta):=\Spec (\CC[\SS_{\Delta}])$. Then there exists a decomposition $\tl{Z}(\Delta)=\bigsqcup_{\Delta_1 \prec \Delta } T^{\prime}_{\Delta_1}$ of $\tl{Z}(\Delta)$ into $T$-orbits and the natural morphism $\pi_{\Delta}\colon \tl{Z}(\Delta)\longrightarrow \overline{T_{\Delta}}$ induces a finite covering $T^{\prime}_{\Delta_1}\longrightarrow T_{\Delta_1}$ for any $\Delta_1 \prec \Delta$. Since $\pi_{\Delta}$ induces an isomorphism $T^{\prime}_{\Delta} \simto T_{\Delta}$, we have $R\pi_{\Delta *}(\CC_{T^{\prime}_{\Delta}}) \simeq \CC_{T_{\Delta}}$. Therefore by Proposition \ref{prp:2-9}, in order to calculate $\zeta_{f \circ i_{\Delta}, x}(\CC_{T_{\Delta}})(t)$ it suffices to calculate $\zeta_{f \circ i_{\Delta} \circ \pi_{\Delta}}(\CC_{T^{\prime}_{\Delta}})(t)$ at each point of the finite set $\pi_{\Delta}^{-1}(x)=\{ p_1,p_2, \ldots , p_k\} \subset \tl{Z}(\Delta)$. Let $M^{\prime} \simeq \ZZ^{n -\d \Delta_0}$ be a sublattice of $M$ such that $M^{\prime} \oplus (M \cap \LL(\Delta_0))=M$ and set $\SS^{\prime}_{\Delta}=M^{\prime} \cap \SS_{\Delta}$. Then we have
\begin{equation}
\SS_{\Delta}= \SS^{\prime}_{\Delta}\oplus (M(\SS \cap \Delta) \cap \LL(\Delta_0))
\end{equation}
and $K(\SS^{\prime}_{\Delta})\subset M^{\prime}_{\RR}$ is a strongly convex cone. Hence we have
\begin{equation}
\tl{Z}(\Delta)\simeq\Spec (\CC [\SS^{\prime}_{\Delta}]) \times (\CC^*)^{\d \Delta_0}\supset \{ 0\} \times (\CC^*)^{\d \Delta_0}\supset \pi_{\Delta}^{-1}(x)
\end{equation}
and $\zeta_{f \circ i_{\Delta} \circ \pi_{\Delta}}(\CC_{T^{\prime}_{\Delta}})(t)$ can be calculated at each point of $\pi_{\Delta}^{-1}(x)=\{ p_1,p_2, \ldots , p_k\}$ by Corollary \ref{cor:3-6}. Indeed, we first multiply a monomial in $\CC[M(\SS \cap \Delta) \cap \LL(\Delta_0)] \subset \CC [\SS_{\Delta}]$ to $f \circ i_{\Delta} \circ \pi_{\Delta} \in \CC [\SS_{\Delta}]$ and extend it to a function on $\Spec (\CC [\SS^{\prime}_{\Delta}]) \times \CC^{\d \Delta_0}$. Then by a suitable translation we may assume that $p_i \in \pi_{\Delta}^{-1}(x)$ is the unique $T$-fixed point of the product toric variety $\Spec (\CC [\SS^{\prime}_{\Delta}]) \times \CC^{\d \Delta_0}$ and Corollary \ref{cor:3-6} can be applied. 
\end{remark}

In the rest of this section, we extend our results to non-degenerate complete intersection subvarieties in the affine toric variety $X(\SS)$. Let $f_1,f_2,\ldots, f_k \in \CC[\SS]$ ($1 \leq k \leq n=\d X(\SS)$) and consider the following subvarieties of $X(\SS)$:
\begin{equation}
V:=\{f_1=\cdots =f_{k-1}=f_k=0\} \subset W:=\{f_1=\cdots =f_{k-1}=0\}.
\end{equation}
Assume that $0 \in V$. Our objective here is to study the Milnor fiber $G_0$ of $g:=f_k|_W \colon W \longrightarrow \CC$ at $0 \in V=g^{-1}(0) \subset W$ and its monodromy zeta function $\zeta_{g,0}(t)$. We call $\zeta_{g,0}(t)$ the $k$-th principal monodromy zeta function of $V=\{ f_1=\cdots =f_k=0\}$. For each face $\Delta \prec K(\SS)$ of $K(\SS)$ such that $\Gamma_+(f_k) \cap \Delta \neq \emptyset$, we set
\begin{equation}
I(\Delta):=\{ j =1,2,\ldots, k-1 \ | \ \Gamma_+(f_j) \cap \Delta \neq \emptyset \} \subset \{ 1,2,\ldots, k-1\}
\end{equation}
and $m(\Delta):=\sharp I(\Delta)+1$. Let $\LL(\Delta)$, $M(\SS \cap \Delta)$, $M(\SS \cap \Delta)^*$ be as before and $\LL(\Delta)^*$ the dual vector space of $\LL(\Delta)$. Then $M(\SS \cap \Delta)^*$ is naturally identified with a subset of $\LL(\Delta)^*$ and the polar cone
\begin{equation}
\Delta^{\vee}=\{ u \in \LL(\Delta)^* \ | \ \text{$\langle u, v\rangle \geq 0$ for any $v \in \Delta$}\}
\end{equation}
of $\Delta$ in $\LL(\Delta)^*$ is a rational polyhedral convex cone with respect to the lattice $M(\SS \cap \Delta)^*$ in $\LL(\Delta)^*$.

\begin{definition}\label{dfn:3-7}
\begin{enumerate}
\item For a function $f=\sum_{v \in \Gamma_+(f)} a_v \cdot v \in \CC[\SS]$ on $X(\SS)$ and $u \in \Delta^{\vee}$, we set $f|_{\Delta}:= \sum_{v \in \Gamma_+(f) \cap \Delta}a_v \cdot v \in \CC[\SS \cap \Delta]$ and
\begin{equation}\label{eq:3-12}
\Gamma(f|_{\Delta};u):=\left\{ v\in \Gamma_+(f) \cap \Delta \ \left|\ \langle u, v \rangle =\min_{w \in \Gamma_+(f) \cap \Delta} \langle u,w \rangle \right.\right\}.
\end{equation}
We call $\Gamma(f|_{\Delta};u)$ the supporting face of $u$ in $\Gamma_+(f) \cap \Delta$.
\item For $j \in I(\Delta) \sqcup \{ k\}$ and $u \in \Delta^{\vee}$, we define the $u$-part $f_j^u \in \CC[\SS \cap \Delta]$ of $f_j$ by
\begin{equation}
f_j^u:=\sum_{v \in \Gamma(f_j|_{\Delta};u)} a_v \cdot v \in \CC[\SS \cap \Delta ],
\end{equation}
where $f_j=\sum_{v \in \Gamma_+(f_j)} a_v \cdot v $ in $\CC[\SS]$.
\end{enumerate}
\end{definition}

By taking a $\ZZ$-basis of $M(\SS)$ and identifying the $u$-parts $f_j^u$ with Laurent polynomials $L(f_j^u)$ on $T=(\CC^*)^n$ as before, we have the following definition which does not depend on the choice of the $\ZZ$-basis of $M(\SS)$.

\begin{definition}\label{dfn:3-9}
We say that $(f_1,\ldots, f_k)$ is non-degenerate if for any face $\Delta \prec K(\SS)$ such that $\Gamma_+(f_k) \cap \Delta \neq \emptyset$ (including the case where $\Delta =K(\SS)$) and any $u \in \Int(\Delta^{\vee}) \cap M(\SS \cap \Delta)^*$ the following two subvarieties of $(\CC^*)^n$ are non-degenerate complete intersections.
\begin{gather}
\{ x\in (\CC^*)^n \ |\ \text{$L(f_j^u)(x)=0$ for any $j \in I(\Delta)$}\},\\
\{ x\in (\CC^*)^n \ |\ \text{$L(f_j^u)(x)=0$ for any $j \in I(\Delta) \sqcup \{ k\}$}\}.
\end{gather}
\end{definition}

\begin{remark}\label{rem:3-10}
The above definition is slightly different from the one in \cite{Oka-2} etc., since our result (Theorem \ref{thm:3-12} below) generalizes the ones in \cite{Kirillov}, \cite{Oka-1} and \cite{Oka-2}.
\end{remark}

For each face $\Delta \prec K(\SS)$ of $K(\SS)$ such that $\Gamma_+(f_k) \cap \Delta \neq \emptyset$, let us set
\begin{equation}
f_{\Delta}:=\left(\prod_{j \in I(\Delta)}f_j\right) \cdot f_k \in \CC[\SS]
\end{equation}
and consider its Newton polygon $\Gamma_+(f_{\Delta}) \subset K(\SS)$. Let $\gamma_1^{\Delta}, \gamma_2^{\Delta}, \ldots, \gamma_{n(\Delta)}^{\Delta}$ be the compact faces of $\Gamma_+(f_{\Delta}) \cap \Delta$ ($\neq \emptyset$) such that $\d \gamma_i^{\Delta}=\d \Delta -1$. Then for each $1 \leq i \leq n(\Delta)$ there exists a unique primitive vector $u_i^{\Delta} \in \Int(\Delta^{\vee}) \cap M(\SS \cap \Delta)^*$ which takes its minimal in $\Gamma_+(f_{\Delta}) \cap \Delta$ exactly on $\gamma_i^{\Delta}$. For $j \in I(\Delta) \sqcup \{k\}$, set
\begin{equation}
\gamma(f_j)_i^{\Delta}:=\Gamma(f_j|_{\Delta}; u_i^{\Delta})
\end{equation}
and 
\begin{equation}
d_i^{\Delta} := \min_{w \in \Gamma_+(f_k) \cap \Delta} \langle u_i^{\Delta},w \rangle .
\end{equation}
Note that we have
\begin{equation}
\gamma_i^{\Delta}=\sum_{j \in I(\Delta)\sqcup \{ k\}} \gamma(f_j)_i^{\Delta}
\end{equation}
for any face $\Delta \prec K(\SS)$ such that $\Gamma_+(f_k) \cap \Delta \neq \emptyset$ and $1 \leq i \leq n(\Delta)$. For each face $\Delta \prec K(\SS)$ such that $\Gamma_+(f_k) \cap \Delta \neq \emptyset$, $\d \Delta \geq m(\Delta)$ and $1 \leq i \leq n(\Delta)$, we set
\begin{equation}
K_i^{\Delta}:=\hspace*{-10mm}\sum_{\begin{subarray}{c}\alpha_1+\cdots +\alpha_{m(\Delta)}=\d \Delta-1 \\ \text{$\alpha_q \geq 1$ for $q \leq m(\Delta)-1$}, \ \alpha_{m(\Delta)} \geq 0 \end{subarray}} \hspace*{-10mm}\Vol_{\ZZ}(\underbrace{\gamma(f_{j_1})_i^{\Delta},\ldots, \gamma(f_{j_1})_i^{\Delta}}_{\text{$\alpha_1$ times}}, \ldots, \underbrace{\gamma(f_{j_{m(\Delta)}} )_i^{\Delta}, \ldots, \gamma(f_{j_{m(\Delta)}} )_i^{\Delta}}_{\text{$\alpha_{m(\Delta)} $ times}}).
\end{equation}
Here we set $I(\Delta) \sqcup \{k\} =\{j_1, j_2, \ldots , k=j_{m(\Delta)} \}$ and
\begin{equation}
\Vol_{\ZZ}(\underbrace{\gamma(f_{j_1})_i^{\Delta},\ldots, \gamma(f_{j_1})_i^{\Delta}}_{\text{$\alpha_1$ times}}, \ldots, \underbrace{\gamma(f_{j_{m(\Delta)}} )_i^{\Delta}, \ldots, \gamma(f_{j_{m(\Delta)}} )_i^{\Delta}}_{\text{$\alpha_{m(\Delta)} $ times}})
\end{equation}
is the normalized $(\d \Delta -1)$-dimensional mixed volume of 
\begin{equation}
\underbrace{\gamma(f_{j_1})_i^{\Delta},\ldots, \gamma(f_{j_1})_i^{\Delta}}_{\text{$\alpha_1$ times}}, \ldots, \underbrace{\gamma(f_{j_{m(\Delta)}})_i^{\Delta}, \ldots, \gamma(f_{j_{m(\Delta)}})_i^{\Delta}}_{\text{$\alpha_{m(\Delta)}$ times}}
\end{equation}
(see Remark \ref{rem:2-13}) with respect to the lattice $M(\SS \cap \Delta)\cap \LL(\gamma_i^{\Delta})$.

\begin{remark}\label{rem:3-11}
If $\d \Delta -1=0$, we set
\begin{equation}
K_i^{\Delta}=\Vol_{\ZZ}(\underbrace{\gamma(f_k)_i^{\Delta}, \ldots, \gamma(f_k)_i^{\Delta}}_{\text{$0$ times}}):=1
\end{equation}
(in this case $\gamma(f_k)_i^{\Delta}$ is a point).
\end{remark}

\begin{theorem}\label{thm:3-12} Assume that $(f_1,\ldots, f_k)$ is non-degenerate. Then the $k$-th principal monodromy zeta function $\zeta_{g,0}(t)$ ($g=f_k|_W \colon W \longrightarrow \CC$) is given by
\begin{equation}
\zeta_{g,0}(t)= \prod_{\begin{subarray}{c} \Gamma_+(f_k) \cap \Delta \neq \emptyset \\ \d \Delta \geq m(\Delta) \end{subarray}} \zeta_{g,\Delta}(t), 
\end{equation}
where for each face $\Delta \prec K(\SS)$ of $K(\SS)$ such that $\Gamma_+(f_k) \cap \Delta \neq \emptyset$ and $\d \Delta \geq m(\Delta)$ we set 
\begin{equation}
\zeta_{g,\Delta}(t) = \prod_{i=1}^{n(\Delta)} \left(1-t^{d_i^{\Delta}} \right)^{(-1)^{\d \Delta -m(\Delta)}K_i^{\Delta}}.
\end{equation}
In particular, the Euler characteristic of the Milnor fiber $G_0$ of $g=f_k|_W \colon W \longrightarrow \CC$ at $0 \in V=g^{-1}(0)$ is given by
\begin{equation}\label{eq:3-28}
\chi(G_0)=\sum_{\begin{subarray}{c}
\Gamma_+(f_k) \cap \Delta \neq \emptyset \\ \d \Delta \geq m(\Delta) \end{subarray}} (-1)^{\d \Delta -m(\Delta)} \sum_{i=1}^{n(\Delta)} d_i^{\Delta}\cdot K_i^{\Delta}.
\end{equation}
\end{theorem}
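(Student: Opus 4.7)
The plan is to reduce to a computation on each $T$-orbit via the stratification $X(\SS)=\bigsqcup_{\Delta} T_{\Delta}$, eliminate the non-contributing strata, resolve by a smooth toric modification, and apply Bernstein--Khovanskii--Kushnirenko (Theorem \ref{thm:2-14}).

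In $\Kbc(W)$ one has $[\CC_W]=\sum_{\Delta \prec K(\SS)}[\CC_{T_{\Delta}\cap W}]$, so the behaviour of $\zeta_g(\cdot)$ on distinguished triangles (a consequence of Proposition \ref{prp:2-9} applied with $\pi=\id$) gives
\[
\zeta_{g,0}(t)=\prod_{\Delta \prec K(\SS)} \zeta_{g,0}(\CC_{T_{\Delta}\cap W})(t).
\]
Next I would show that the factor at $\Delta$ is trivial unless $\Gamma_+(f_k)\cap \Delta \neq \emptyset$ and $\d \Delta \geq m(\Delta)$. If $\Gamma_+(f_k) \cap \Delta = \emptyset$ then $f_k$ vanishes on $\overline{T_{\Delta}}$, so $\CC_{T_{\Delta} \cap W}$ is supported in $\{g=0\}$ and $\psi_{f_k}(\CC_{T_{\Delta} \cap W})=0$. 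If $\d \Delta < m(\Delta)$, the non-degeneracy of $(f_1,\ldots,f_k)$ on $T_{\Delta}\simeq (\CC^*)^{\d \Delta}$ forces $V \cap T_{\Delta} = \emptyset$ (too many transverse equations); combined with a short analysis via Proposition \ref{prp:2-7-2}, this gives $\zeta_{g,0}(\CC_{T_{\Delta} \cap W})(t)=1$.

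For each surviving $\Delta$, I would construct a smooth simplicial subdivision $\Sigma_{\Delta}$ of $\Delta^{\vee}\subset M(\SS \cap \Delta)^*_{\RR}$ that contains every primitive ray $\RR_{\geq 0}u_i^{\Delta}$ and is compatible with the normal fans of $\Gamma_+(f_j)\cap \Delta$ for $j\in I(\Delta)\sqcup\{k\}$. The associated toric variety $Z(\Delta)$ is smooth, comes with a proper morphism $\pi_{\Delta}\colon Z(\Delta)\to \overline{T_{\Delta}}$ that is an isomorphism over $T_{\Delta}$, and has the property (using non-degeneracy) that the toric boundary together with the proper transforms of $\{f_j=0\}$ forms a simple normal crossings divisor. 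Applying Proposition \ref{prp:2-9} to $\pi_{\Delta}$ reduces $\zeta_{g,0}(\CC_{T_{\Delta}\cap W})(t)$ to a stratum-by-stratum computation on $Z(\Delta)$: only the toric divisors $E_i$ coming from the rays $u_i^{\Delta}$ contribute, the multiplicity of $\pi_{\Delta}^*f_k$ along $E_i$ is exactly the lattice distance $d_i^{\Delta}$, and the contribution is $(1-t^{d_i^{\Delta}})^{\chi_i^{\Delta}}$, where $\chi_i^{\Delta}$ is the Euler characteristic of the generic open stratum of $E_i \cap \pi_{\Delta}^{-1}(W)$.

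This generic stratum is a non-degenerate complete intersection in a $(\d \Delta -1)$-dimensional torus cut out by the Laurent polynomials $L(f_j^{u_i^{\Delta}})$ for $j\in I(\Delta)$, further intersected with the open condition $L(f_k^{u_i^{\Delta}}) \neq 0$. Theorem \ref{thm:2-14} together with the mixed-volume identity of Remark \ref{rem:2-13} then evaluates $\chi_i^{\Delta}=(-1)^{\d\Delta - m(\Delta)}K_i^{\Delta}$, yielding the asserted product formula; the Euler characteristic identity \eqref{eq:3-28} follows by taking the degree in $t$, since $\deg \zeta_{g,0}(t) = \chi(G_0)$. The main obstacle, I expect, is the third step: one must verify that the constructed toric modification really produces the multiplicity $d_i^{\Delta}$ along $E_i$ and that non-degeneracy of $(f_1,\ldots,f_k)$ propagates to the generic strata of the resolution, so that the stratified nearby-cycle computation and the BKK count both apply cleanly on each $E_i$.
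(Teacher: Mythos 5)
Your proposal follows the same overall strategy as the paper: decompose by $T$-orbits via Proposition \ref{prp:2-9}, pass to a smooth toric modification of $\overline{T_{\Delta}}$, identify the contributing boundary divisors with the rays $\RR_{\geq 0}u_i^{\Delta}$ whose multiplicities are the $d_i^{\Delta}$, and evaluate the Euler characteristics of the generic strata by Theorem \ref{thm:2-14} together with Remark \ref{rem:2-13}. The detailed local analysis you flag as ``the main obstacle'' is precisely what Lemma \ref{lem:4-1} and Proposition \ref{prp:4-2} do, so your outline is aimed at the right place.

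There is, however, one step where your route diverges from the paper and as stated has a gap: the elimination of faces with $\d\Delta<m(\Delta)$. You argue that non-degeneracy on $T_{\Delta}\simeq(\CC^*)^{\d\Delta}$ forces $V\cap T_{\Delta}=\emptyset$ and then that $\zeta_{f_k,0}(\CC_{T_{\Delta}\cap W})(t)=1$. Neither implication is immediate. Non-degeneracy in Definition \ref{dfn:3-9} constrains only the leading parts $f_j^{u}$, not $f_j|_{\Delta}$ on all of $T_{\Delta}$, so one cannot directly conclude $V\cap T_{\Delta}=\emptyset$ (for an empty set the non-degenerate complete intersection condition is vacuous, so it gives no information). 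Moreover, even granting $V\cap T_{\Delta}=\emptyset$, the nearby cycle $\psi_{f_k}(\CC_{T_{\Delta}\cap W})_0$ is controlled by $F_0\cap T_{\Delta}\cap W$ where $F_0\subset\{f_k=\eta\}$; one must still rule out a positive-dimensional component of $T_{\Delta}\cap W$ whose closure meets $0$, which does not follow from what you invoke. The paper sidesteps all of this: it runs the resolution-and-BKK computation uniformly for every $\Delta$ with $\Gamma_+(f_k)\cap\Delta\neq\emptyset$ (including $\d\Delta<m(\Delta)$), obtaining $\zeta_{g,\Delta}(t)=\prod_i(1-t^{d_i^{\Delta}})^{(-1)^{\d\Delta-m(\Delta)}K_i^{\Delta}}$ in all cases, and then simply observes that the sum defining $K_i^{\Delta}$ is empty (hence $K_i^{\Delta}=0$) whenever $m(\Delta)>\d\Delta$, because no tuple $(\alpha_1,\dots,\alpha_{m(\Delta)})$ with $\alpha_q\geq 1$ for $q\leq m(\Delta)-1$ can sum to $\d\Delta-1$. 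This combinatorial vanishing is cleaner and requires no separate geometric claim; you should replace your elimination step by this argument. The rest of your outline, including the observation that rays $u$ with $\d\Gamma(f_{\Delta}|_{\Delta};u)<\d\Delta-1$ contribute trivially because all relevant mixed volumes vanish, agrees with the paper's computation in \eqref{eq:4-35}.
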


\section{Proof of Theorem \ref{thm:3-12}}\label{sec:4}

Now let us prove Theorem \ref{thm:3-12}. Theorem \ref{thm:3-4} will be proved as a special case of Theorem \ref{thm:3-12}. Our proof is similar to the one in \cite{Varchenko}.

First let $i_{\Delta} \colon \overline{T_{\Delta}} \longhookrightarrow X(\SS)$ be the closed embedding. Since nearby cycle functors take distinguished triangles to distinguished triangles, by $X(\SS)=\bigsqcup_{\Delta \prec K(\SS)} T_{\Delta}$ and $W=\bigsqcup_{\Delta \prec K(\SS)}(T_{\Delta} \cap W)$ we obtain
\begin{eqnarray}
\label{eq:4-1}\zeta_{g, 0}(t)
&=&\zeta_{f_k,0}\(\CC_W\)(t)\\
\label{eq:4-2}&=&\zeta_{f_k,0}\(\CC_{W \setminus\{0\}}\)(t)\\
\label{eq:4-3}&=&\prod_{\{0\} \precneqq \Delta \prec K(\SS)} \zeta_{f_k,0}\(\CC_{T_{\Delta} \cap W}\)(t)\\
\label{eq:4-4}&=&\prod_{\{0\} \precneqq \Delta \prec K(\SS)} \zeta_{f_k,0}\(i_{\Delta *}\CC_{T_{\Delta} \cap W}\)(t)\\
\label{eq:4-5}&=&\prod_{\{0\}\precneqq \Delta \prec K(\SS)} \zeta_{f_k\circ i_{\Delta},0}\(\CC_{T_{\Delta} \cap W}\)(t). 
\end{eqnarray}
Here we used Proposition \ref{prp:2-9} to prove the first and last equalities. We set
\begin{equation}
\zeta_{g,\Delta}(t):=\zeta_{f_k\circ i_{\Delta},0}\(\CC_{T_{\Delta} \cap W}\)(t) \in \CC(t)^*.
\end{equation}
Since the condition $\Gamma_+(f_k) \cap \Delta =\emptyset$ is equivalent to the one $f_k \circ i_{\Delta}\equiv 0$, for a face $\Delta$ of $K(\SS)$ such that $\Gamma_+(f_k) \cap \Delta =\emptyset$ the nearby cycle $\psi_{f_k \circ i_{\Delta}}\(\CC_{T_{\Delta} \cap W}\)$ vanishes and hence $\zeta_{g,\Delta}(t) \equiv 1$. Therefore, in order to calculate the monodromy zeta function $\zeta_{g,0}(t)$ of $g=f_k|_W \colon W \longrightarrow \CC$ at $0 \in g^{-1}(0)$, it suffices to calculate $\zeta_{g,\Delta}(t)$ only for faces $\Delta$ of $K(\SS)$ such that $\Delta \neq \{0\}$ and $\Gamma_+(f_k) \cap \Delta \neq \emptyset$.

Let us fix such a face $\Delta$ of $K(\SS)$. Set $n^{\prime}:=\d \Delta$ and $f:=f_k \circ i_{\Delta} \colon \overline{T_{\Delta}} \longrightarrow \CC$. Note that we have $T_{\Delta}=\Spec (\CC [M(\SS\cap\Delta)])$ and $\overline{T_{\Delta}} =X(\SS \cap \Delta):=\Spec(\CC[\SS\cap\Delta])$. We shall calculate the function
\begin{equation}
\zeta_{g,\Delta}(t)=\zeta_{f,0}(\CC_{T_{\Delta} \cap W})(t) \in \CC(t)^*.
\end{equation}
For this purpose, we divide the polar cone
\begin{equation}
\Delta^{\vee}=\{ u \in \LL(\Delta)^* \ |\ \text{$\langle u,v \rangle \geq 0$ for any $v \in \Delta$}\}
\end{equation}
of $\Delta$ in $\LL(\Delta)^*$ by the equivalence relation
\begin{equation}
u \sim u^{\prime} \hspace{5mm}\overset{{\rm def}}{\Longleftrightarrow} \hspace{5mm}\Gamma(f_j|_{\Delta};u)=\Gamma(f_j|_{\Delta};u^{\prime}) \hspace{5mm}(\forall j \in I(\Delta) \sqcup \{k\})
\end{equation}
(see \eqref{eq:3-12}). Then we obtain a fan $\tl{\Sigma_{\Delta}}=\{ \sigma_i^{\prime}\}_i$ in $(\LL(\Delta)^*, M(\SS \cap \Delta)^*)$ such that $\bigsqcup_i \relint (\sigma_i^{\prime}) =\Delta^{\vee}$, where $\relint(\, \cdot\, )$ is the relative interior. Note that for the product $f_{\Delta}=\(\prod_{j \in I(\Delta)} f_j\) \cdot f_k \in \CC[\SS]$ and $u, u^{\prime} \in \Delta^{\vee}$ we have
\begin{equation}
u \sim u^{\prime} \hspace{5mm}\Longleftrightarrow \hspace{5mm} \Gamma(f_{\Delta}|_{\Delta};u)=\Gamma(f_{\Delta}|_{\Delta};u^{\prime}).
\end{equation}
By applying sufficiently many barycentric subdivisions to $\tl{\Sigma_{\Delta}}$, we obtain a fan $\Sigma:=\Sigma_{\Delta}=\{\sigma_i\}_{i \in I}$ in $(\LL(\Delta)^*, M(\SS \cap \Delta)^*)$ such that $\bigsqcup_i \relint (\sigma_i ) =\Delta^{\vee}$ and the (normal) toric variety $X_{\Sigma}$ associated with it is smooth. Then the open dense torus $T_{\Delta}^{\prime \prime}$ in $X_{\Sigma}$ is defined by
\begin{equation}
T_{\Delta}^{\prime \prime}=\Spec(\CC[M(\SS\cap\Delta)]).
\end{equation}
Since the subsemigroup $M(\SS \cap \Delta) \cap \Delta$ of $M(\SS \cap \Delta)$ is saturated, the affine toric variety $Z(\Delta):=\Spec(\CC[M(\SS\cap\Delta)\cap \Delta])$ is normal and there exists a natural $T_{\Delta}^{\prime \prime}$-equivariant morphism
\begin{equation}
\pi_1 \colon Z(\Delta) \longrightarrow X(\SS \cap \Delta)
\end{equation}
induced by $\SS \cap \Delta \subset M(\SS\cap\Delta)\cap\Delta$. There exists also a natural $T_{\Delta}^{\prime \prime}$-equivariant proper birational morphism
\begin{equation}
\pi_2 \colon X_{\Sigma} \longrightarrow Z(\Delta)
\end{equation}
induced by the morphism $\Sigma \longrightarrow \{\Delta^{\vee}\}$ of fans in $(\LL(\Delta)^*, M(\SS \cap \Delta)^*)$. Hence we obtain a $T_{\Delta}^{\prime \prime}$-equivariant morphism
\begin{equation}
\pi :=\pi_1 \circ \pi_2 \colon X_{\Sigma} \longrightarrow X(\SS \cap \Delta).
\end{equation}
We shall use this morphism $\pi$ for the calculation of $\zeta_{g,\Delta}(t)$. For a face $\tau \prec \Delta^{\vee}$ of $\Delta^{\vee}$, denote by $\Delta_{\tau}$ the polar face $\Delta \cap \tau^{\perp}$ of $\tau$ in $\LL(\Delta)$ and set
\begin{eqnarray}
T_{\tau}^{\prime}&:=&\Spec(\CC[M(\SS\cap\Delta)\cap\tau^{\perp}])=\Spec(\CC[M(\SS\cap\Delta)\cap\LL(\Delta_{\tau})]),\\
T_{\tau}&:=& \Spec(\CC[M(\SS \cap \Delta_{\tau})]).
\end{eqnarray}
Then $T_{\tau}^{\prime}$ and $T_{\tau}$ are $T_{\Delta}^{\prime \prime}$-orbits in $Z(\Delta)$ and $X(\SS \cap \Delta)$ respectively. Note that we have $T_{\{0\}}=T_{\Delta}$ and $T_{\{0\}}^{\prime}=T_{\Delta}^{\prime \prime}$ under this notation. Since the kernel of the canonical morphism
\begin{equation}
T_{\tau}^{\prime}=\Spec(\CC[M(\SS \cap \Delta) \cap \LL(\Delta_{\tau})]) \longrightarrow T_{\tau}=\Spec(\CC[M(\SS \cap \Delta_{\tau})])
\end{equation}
is isomorphic to the finite group $M(\SS \cap \Delta_{\tau})^*/( M(\SS \cap \Delta)\cap \LL(\Delta_{\tau}))^*$ (see \cite[p.22]{Oda}), the morphism $\pi_1|_{T_{\tau}^{\prime}} \colon T_{\tau}^{\prime} \longrightarrow T_{\tau}$ is a finite covering. 

For a cone $\sigma_i \in \Sigma$, denote by $T_{\sigma_i}^{\prime \prime}$ the $T_{\Delta}^{\prime \prime}$-orbit $\Spec(\CC[M(\SS\cap\Delta)\cap\sigma_i^{\perp}]) \simeq (\CC^*)^{\d \Delta-\d \sigma_i}$ in $X_{\Sigma}$ (in particular we have $T^{\prime \prime}_{\{0\}}=T_{\Delta}^{\prime \prime}$). Then we obtain a decomposition $X_{\Sigma}=\bigsqcup_{\sigma_i\in \Sigma} T_{\sigma_i}^{\prime \prime}$ of $X_{\Sigma}$. Note that the proper morphism $\pi_2\colon X_{\Sigma} \longrightarrow Z(\Delta)$ induces an isomorphism $\pi_2|_{T^{\prime \prime}_{\{0\}}} \colon T^{\prime \prime}_{\{0\}} \simto T^{\prime}_{\{0\}}$. 

Let us set $\tl{f}:=f \circ \pi \colon X_{\Sigma} \longrightarrow \CC$ and apply Proposition \ref{prp:2-9} to the constructible sheaf $\CC_{T_{\Delta}^{\prime \prime}\cap \pi^{-1}( T_{\Delta} \cap W)}\in \Dbc(X_{\Sigma})$. Since the morphism $\pi \colon X_{\Sigma} \longrightarrow X(\SS\cap\Delta)$ is proper and induces an isomorphism $\pi|_{T_{\Delta}^{\prime \prime}} \colon T_{\Delta}^{\prime \prime}\simto T_{\Delta}$, by the above descriptions of $\pi_1$ and $\pi_2$ we have
\begin{equation}
R\pi_*\(\CC_{T_{\Delta}^{\prime \prime}\cap \pi^{-1}(T_{\Delta} \cap W)}\)\simeq \CC_{T_{\Delta}\cap W}
\end{equation}
in $\Dbc(X(\SS\cap\Delta))$. Then by Proposition \ref{prp:2-9} for the calculation of
\begin{equation}
\zeta_{f,0}\(\CC_{T_{\Delta}\cap W}\)(t)=\zeta_{f,0}\(R\pi_*\(\CC_{T_{\Delta}^{\prime \prime}\cap \pi^{-1}(T_{\Delta} \cap W)}\)\)(t)\in \CC(t)^*
\end{equation}
it suffices to calculate the value of $\zeta_{\tl{f}}\(\CC_{T_{\Delta}^{\prime \prime}\cap \pi^{-1}(T_{\Delta} \cap W)}\)$ at each point of $\pi^{-1}(0)$.

Let $\sigma_0 \in \Sigma$ be a cone in $\Sigma$ such that $\relint(\sigma_0) \subset \Int(\Delta^{\vee})$ ($\Longleftrightarrow T_{\sigma_0}^{\prime \prime} \subset \pi^{-1}(0)$). In order to calculate the constructible function $\zeta_{\tl{f}}\(\CC_{T_{\Delta}^{\prime \prime} \cap \pi^{-1}(T_{\Delta} \cap W)}\)$ on $T_{\sigma_0}^{\prime \prime}$, take an $n^{\prime}$-dimensional cone $\sigma_1 \in \Sigma$ such that $\sigma_0 \prec \sigma_1$ and let $\{a_1,a_2,\ldots, a_{n^{\prime}}\}$ be the $1$-skelelton of $\sigma_1$. In other words, $a_i\neq 0\in M(\SS \cap \Delta)^*$ are the primitive vectors on the $1$-dimensional faces of $\sigma_1$. Set $p:=\d \sigma_0$. Without loss of generality, we may assume that $\{a_1,a_2,\ldots, a_p\}$ is the $1$-skeleton $\sigma_0$. For $j \in I(\Delta) \sqcup \{ k\}$, we set
\begin{equation}
m(j)_i:=\min_{w \in \Gamma_+(f_j)\cap \Delta} \langle a_i, w \rangle \geq 0 \hspace{5mm}(i=1,2,\ldots, n^{\prime}).
\end{equation}
For simplicity, we set also $m_i:=m(k)_i$ ($i=1,2,\ldots, n^{\prime}$).

Let $U_1 :=\CC^{n^{\prime}}(\sigma_1) \simeq\CC_y^{n^{\prime}}$ be the affine toric variety associated with the fan $\{\sigma^{\prime}\}_{\sigma^{\prime} \prec \sigma_1}$ in $(\LL(\Delta)^*, M(\SS \cap \Delta)^*)$. Then $U_1$ is an affine open subset of $X_{\Sigma}$ and in $U_1\simeq\CC_y^{n^{\prime}}$ the $T_{\Delta}^{\prime \prime}$-orbit $T_{\sigma_0}^{\prime \prime}$ is defined by
\begin{equation}
T_{\sigma_0}^{\prime \prime}=\{ y=(y_1,\ldots, y_{n^{\prime}})\ |\ y_1=\cdots =y_p=0, \ y_{p+1}, \ldots, y_{n^{\prime}}\neq 0\} \simeq (\CC^*)^{n^{\prime}-p}.
\end{equation}
Moreover for $j \in I(\Delta) \sqcup \{k\}$, on $U_1\simeq \CC_y^{n^{\prime}}$ the function $f_j \circ \pi \colon X_{\Sigma} \longrightarrow \CC$ has the form 
\begin{equation}
(f_j \circ \pi)(y)=y_1^{m(j)_1}y_2^{m(j)_2} \cdots y_{n^{\prime}}^{m(j)_{n^{\prime}}} \cdot f_j^{\sigma_1}(y),
\end{equation}
where $f_j^{\sigma_1}$ is a polynomial function on $U_1$. Note that by the assumptions $T_{\sigma_0}^{\prime \prime} \subset \pi^{-1}(0)$ and $f_k(0)=0$ we have $T_{\sigma_0}^{\prime \prime} \subset \tl{f}^{-1}(0)$. Since $f_k^{\sigma_1}|_{T_{\sigma_0}^{\prime \prime}} \not\equiv 0$ by the construction of the fan $\Sigma$, at least one of the integers $m_1,m_2,\ldots , m_p \geq 0$ must be positive. Note that by the definition of $I(\Delta)$ we have $f_j \circ i_{\Delta} \equiv 0$ on $X(\SS \cap \Delta)$ for any $j \notin I(\Delta) \sqcup \{k\}$. Hence we obtain our key formula
\begin{equation}
T_{\Delta}^{\prime \prime}\cap \pi^{-1}(T_{\Delta} \cap W)=T_{\Delta}^{\prime \prime} \cap \bigcap_{j \in I(\Delta)} \{ f_j^{\sigma_1} =0\}. 
\end{equation}
Moreover by the non-degeneracy of $(f_1,f_2,\ldots,f_k)$ (see Definition \ref{dfn:3-9}) we obtain the following lemma. 

\begin{lemma}\label{lem:4-1}
The gradient vectors $\{ \grad(f_j^{\sigma_1}) \}_{j \in I(\Delta)}$ (resp. $\{ \grad(f_j^{\sigma_1}) \}_{j \in I(\Delta) \sqcup \{k\}}$) are linearly independent on $\bigcap_{j \in I(\Delta)}\{f_j^{\sigma_1}=0\}$ (resp. $\bigcap_{j \in I(\Delta) \sqcup \{k\}} \{f_j^{\sigma_1}=0\}$) in a neighborhood of $T_{\sigma_0}^{\prime \prime}$.
\end{lemma}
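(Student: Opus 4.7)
The plan is to reduce the linear independence of the gradients $\{\grad(f_j^{\sigma_1})\}$ on $U_1$ to the non-degeneracy hypothesis of Definition \ref{dfn:3-9} by identifying the restriction $f_j^{\sigma_1}|_{T_{\sigma_0}^{\prime \prime}}$ with the Laurent polynomial $L(f_j^u)$ for a suitable $u$. First I would choose a lattice vector $u \in \relint(\sigma_0) \cap M(\SS \cap \Delta)^{*}$; such $u$ exists, and because $\relint(\sigma_0)\subset \Int(\Delta^{\vee})$ it lies in $\Int(\Delta^{\vee})\cap M(\SS\cap\Delta)^{*}$, which is precisely the class of vectors for which Definition \ref{dfn:3-9} is formulated.

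Using the coordinates $y=(y_1,\ldots,y_{n^{\prime}})$ on $U_1 \simeq \CC^{n^{\prime}}$ dual to $\{a_1,\ldots,a_{n^{\prime}}\}$, I would expand $f_j^{\sigma_1}(y) =\sum_{v \in \Gamma_+(f_j)\cap \Delta} a_v y_1^{\langle a_1,v\rangle -m(j)_1}\cdots y_{n^{\prime}}^{\langle a_{n^{\prime}},v\rangle -m(j)_{n^{\prime}}}$ and observe that, since $\{a_1,\ldots,a_p\}$ is the $1$-skeleton of $\sigma_0$ and $u$ is a positive combination of these vectors, setting $y_1=\cdots =y_p=0$ kills exactly the terms with $v\notin \Gamma(f_j|_{\Delta};u)$. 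Consequently $f_j^{\sigma_1}|_{T_{\sigma_0}^{\prime \prime}}$ coincides with $L(f_j^u)$ after the invertible monomial change of coordinates in $y_{p+1},\ldots,y_{n^{\prime}}$ induced by the complementary basis $\{a_{p+1},\ldots,a_{n^{\prime}}\}$ of $\sigma_0^{\perp}\cap M(\SS\cap\Delta)^{*}$, up to a nonvanishing monomial factor on $T_{\sigma_0}^{\prime \prime}$.

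Next I would invoke non-degeneracy of $(f_1,\ldots,f_k)$ at this $u$: the differentials $\{dL(f_j^u)\}_{j\in I(\Delta)}$ (respectively $\{dL(f_j^u)\}_{j\in I(\Delta)\sqcup\{k\}}$) are linearly independent on their common zero locus in $(\CC^{*})^n$. Because the monomials appearing in $L(f_j^u)$ have exponents in the sublattice $M(\SS\cap\Delta)\subset M(\SS)$, this zero locus is invariant under the complementary subtorus of $T$, so the linear independence descends to that of the corresponding differentials on $T_{\sigma_0}^{\prime \prime}\cap \bigcap_j\{f_j^{\sigma_1}=0\}$ via the identification of the previous paragraph. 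Finally, linear independence of the restricted differentials upgrades to linear independence of the full gradient vectors in $\CC^{n^{\prime}}$, since any nontrivial linear relation among the vectors $\grad(f_j^{\sigma_1})(y^{*})$ at a point $y^{*}\in T_{\sigma_0}^{\prime \prime}\cap \bigcap_j\{f_j^{\sigma_1}=0\}$ restricts, by projection onto the last $n^{\prime}-p$ coordinates, to a nontrivial relation on $T_{\sigma_0}^{\prime \prime}$; openness of the non-vanishing of the relevant minor then extends the conclusion to a neighborhood of $T_{\sigma_0}^{\prime \prime}$.

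The hardest part, I expect, is the careful bookkeeping needed to track the monomial change of variables arising from the choice of smooth cone $\sigma_1$ and to verify that the non-degeneracy condition, which is formulated on the full torus $(\CC^{*})^n$ with the lattice $M(\SS)$, transfers faithfully to the lower-dimensional torus $T_{\sigma_0}^{\prime \prime}\subset U_1$ via the sublattice $M(\SS\cap\Delta)$. Once this identification is set up cleanly, the lemma reduces to unpacking Definition \ref{dfn:3-9}.
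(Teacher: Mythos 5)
Your argument is correct and follows essentially the same route as the paper's proof: pick a lattice point $u$ in $\relint(\sigma_0)\cap M(\SS\cap\Delta)^*$, observe that restriction to $T_{\sigma_0}^{\prime\prime}$ picks out exactly the $u$-parts, invoke non-degeneracy (Definition 3.9) at that $u$, and transfer the conclusion back to the chart $U_1$. The paper's own proof is considerably more terse: it handles the lattice transfer by first passing from $(\CC^*)^n=\Spec(\CC[M(\SS)])$ to the subtorus $\Spec(\CC[M(\SS)\cap\LL(\Delta)])\simeq(\CC^*)^{\dim\Delta}$, then descending through the finite covering $\Spec(\CC[M(\SS)\cap\LL(\Delta)])\longrightarrow T_{\Delta}=\Spec(\CC[M(\SS\cap\Delta)])$, and then simply defers to ``classical arguments'' in Oka's book for the coordinate-chart computation. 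Your write-up collapses the two-stage lattice descent into a single ``complementary subtorus'' step and, usefully, unpacks the Oka-style coordinate computation (the expansion of $f_j^{\sigma_1}$ in the $y$-variables, the observation that setting $y_1=\cdots=y_p=0$ selects the $u$-part because $u$ is a strictly positive combination of $a_1,\ldots,a_p$, and the passage from linear independence of the restricted differentials $dh_j$ on $T_{\sigma_0}^{\prime\prime}$ to linear independence of the full gradients $\grad(f_j^{\sigma_1})$ on a neighborhood). You correctly flag the lattice bookkeeping as the delicate point; making it rigorous is most cleanly done exactly by the paper's two-step factorization through $M(\SS)\cap\LL(\Delta)$, since $M(\SS\cap\Delta)$ sits in $M(\SS)\cap\LL(\Delta)$ only with finite index rather than being saturated in $M(\SS)$.
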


\begin{proof}
First, by the non-degeneracy of $(f_1,f_2,\ldots, f_k)$, for any $u \in \Int(\Delta^{\vee}) \cap M(\SS \cap \Delta)^*$ the Laurent polynomials $L^{\prime}(f_j^u)$ on $\Spec(\CC[M(\SS)\cap\LL(\Delta)])\simeq (\CC^*)^{\d \Delta}$ defined by the $u$-parts $f_j^u \in \CC [\SS \cap \Delta]$ ($j \in I(\Delta) \sqcup \{k\}$) satisfy the conditions similar to the ones in Definition \ref{dfn:3-9}. Since the natural morphism
\begin{equation}
\Spec(\CC [M(\SS)\cap\LL(\Delta)])\simeq (\CC^*)^{\d \Delta}\longrightarrow T_{\Delta}=\Spec(\CC [M(\SS \cap \Delta)])
\end{equation}
is a finite covering, also the corresponding Laurent polynomials $L^{\prime \prime}(f_j^u)$ ($j \in I(\Delta) \sqcup \{k\}$) on $T_{\Delta}\simeq (\CC^*)^{\d \Delta}$ satisfy such conditions. Then the result follows from the classical arguments (see \cite{Oka-2} etc. for the detail). \qed
\end{proof}

For $j \in I(\Delta) \sqcup \{ k\}$, we set
\begin{equation}
h_j:=f_j^{\sigma_1}|_{T_{\sigma_0}^{\prime \prime}} \colon T_{\sigma_0}^{\prime \prime} \longrightarrow \CC. 
\end{equation}

\begin{proposition}\label{prp:4-2}
In the situation as above, we have
\begin{enumerate}
\item If $\d \sigma_0=1$, then for $y=(0,y_2,\ldots,y_{n^{\prime}})\in T_{\sigma_0}^{\prime \prime}\simeq (\CC^*)^{n^{\prime}-1}$ we have
\begin{equation}\label{eq:4-26}
\zeta_{\tl{f},y}\(\CC_{T_{\Delta}^{\prime \prime}\cap \pi^{-1}( T_{\Delta} \cap W)}\)(t)=\begin{cases}1-t^{m_1} & \text{if $y\in (\bigcap_{j\in I(\Delta)} \{h_j=0\}) \setminus \{h_k=0\} $},\\
1 &\text{otherwise}.
\end{cases}
\end{equation}
\item If $\d \sigma_0 \geq 2$, we have
\begin{equation}
\zeta_{\tl{f}}\(\CC_{T_{\Delta}^{\prime \prime}\cap \pi^{-1}(T_{\Delta} \cap W)}\) \Big|_{T_{\sigma_0}^{\prime \prime}}=\1_{T_{\sigma_0}^{\prime \prime}}.
\end{equation}
\end{enumerate}
\end{proposition}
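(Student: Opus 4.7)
My plan is to compute $\zeta_{\tl{f},y}(\CC_S)$ (where $S := T_\Delta^{\prime\prime} \cap \pi^{-1}(T_\Delta \cap W)$) at each $y \in T_{\sigma_0}^{\prime\prime}$ by choosing local coordinates that simultaneously straighten the defining equations of $S$ and present $\tl{f}$ as a monomial times a unit. If $h_j(y) \neq 0$ for some $j \in I(\Delta)$, then $\CC_S$ vanishes in a neighborhood of $y$ and $\zeta = 1$ trivially. Otherwise, $h_j(y) = 0$ for every $j \in I(\Delta)$, and Lemma \ref{lem:4-1}, applied to $I(\Delta)$ and (when $h_k(y) = 0$) to $I(\Delta) \sqcup \{k\}$, together with the analogous non-degeneracy of the corresponding $u$-parts restricted to $T_{\sigma_0}^{\prime\prime} \simeq (\CC^*)^{n^\prime - p}$, furnishes the transversality needed to pick local coordinates $(w_1, \ldots, w_{n^\prime})$ centered at $y$ with $w_i = y_i$ for $i \leq p$, $w_{p+1}, \ldots, w_{p+m(\Delta)-1}$ equal to a choice of the $f_j^{\sigma_1}$'s for $j \in I(\Delta)$, and (if $h_k(y) = 0$) $w_{p+m(\Delta)} = f_k^{\sigma_1}$. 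In these coordinates $S$ becomes $\{w_1, \ldots, w_p \neq 0,\ w_{p+1} = \cdots = w_{p+m(\Delta)-1} = 0\}$ and $\tl{f} = w_1^{m_1} \cdots w_p^{m_p} \cdot U(w) \cdot f_k^{\sigma_1}(w)$ with $U$ a non-vanishing holomorphic factor absorbing $y_{p+1}^{m_{p+1}} \cdots y_{n^\prime}^{m_{n^\prime}}$.

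\textbf{Part (i), $p = 1$.} If $h_k(y) \neq 0$ (sub-case 1a), then $f_k^{\sigma_1}$ is also a unit near $y$, and $\tl{f}|_S$ equals $w_1^{m_1} \cdot (\text{unit})$ on $S$ locally isomorphic to a product of a small punctured disc and a polydisc; for small $\eta$, the Milnor fiber is an $m_1$-sheeted trivial cover of a polydisc, the monodromy cyclically permutes the sheets, and $\zeta = 1 - t^{m_1}$. If $h_k(y) = 0$ (sub-case 1b), $\tl{f}|_S = w_1^{m_1} \cdot w_{m(\Delta)+1} \cdot (\text{unit})$; for fixed $w_1$ in the punctured disc, $w_{m(\Delta)+1}$ is uniquely determined on the Milnor fiber, so the fiber is homotopy equivalent to an annulus $\simeq S^1$, and the lift $w_{m(\Delta)+1} \mapsto e^{2\pi i s} w_{m(\Delta)+1}$ of the monodromy loop $\eta \mapsto e^{2\pi i s}\eta$ returns to the identity at $s = 1$, so $\Phi$ acts trivially on $H^0$ and $H^1$ and $\zeta = (1-t)/(1-t) = 1$.

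\textbf{Part (ii), $p \geq 2$.} I apply A'Campo's theorem on the smooth local closure $\bar{S} = \{w_{p+1} = \cdots = w_{p+m(\Delta)-1} = 0\}$, combined with the distinguished triangle $\CC_S \to \CC_{\bar{S}} \to \CC_{\bar{S} \setminus S}$, where $\bar{S} \setminus S = \bar{S} \cap \bigcup_{i \leq p}\{w_i = 0\}$. On $\bar{S}$, $\tl{f}$ is a monomial times unit with normal-crossing divisor supported on $\bigcup_{i \leq p,\ m_i > 0}\{w_i = 0\}$ (plus $\{w_{p+m(\Delta)} = 0\}$ when $h_k(y) = 0$), and A'Campo gives a concrete value of $\zeta(\CC_{\bar{S}})$. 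On each boundary stratum $\{w_i = 0\} \cap \bar{S}$ with $m_i > 0$, $\tl{f}$ vanishes identically so $\psi = 0$ and $\zeta = 1$; on each stratum with $m_i = 0$, $\tl{f}$ restricts to a monomial with the same list of "positive-exponent" components, so by A'Campo on the slice it gives the same contribution as $\bar{S}$ itself. An inductive Mayer-Vietoris computation of $\zeta(\CC_{\bar{S}\setminus S})$ then matches $\zeta(\CC_{\bar{S}})$ exactly (the binomial identity $\sum_{k=1}^{p-1}(-1)^{k+1}\binom{p-1}{k} = 1$ is responsible for this collapse), and the quotient gives $\zeta(\CC_S) = 1$.

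\textbf{Main obstacle.} The principal difficulty is the careful Mayer-Vietoris bookkeeping of the boundary strata in Part (ii): one must separate the strata by whether $\tl{f}$ vanishes identically (giving trivial $\psi$) or retains the normal-crossing structure with $q := \#\{i \leq p : m_i > 0\} \geq 1$ positive-exponent components (using $T_{\sigma_0}^{\prime\prime} \subset \tl{f}^{-1}(0)$ to ensure $q \geq 1$), and combine these via inclusion-exclusion so that the A'Campo contributions at every stratum of $\bar{S}\setminus S$ cancel exactly against $\zeta(\CC_{\bar{S}})$. Throughout, the non-degeneracy of $(f_1, \ldots, f_k)$ is essential to provide the transversality needed for the coordinate change and to guarantee that A'Campo applies at each relevant slice.
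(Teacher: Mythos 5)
Your proposal is correct in outline and arrives at the right values in every case, but it takes a genuinely different route from the paper for part (ii). After the coordinate normalization (which both you and the paper obtain from Lemma \ref{lem:4-1} together with the induced non-degeneracy of the $u$-parts on $T_{\sigma_0}^{\prime\prime}$), the paper's own argument for $p\geq 2$ is much shorter: it splits on $l=\sharp\{i\leq p\ |\ m_i>0\}$, disposes of $l\geq 2$ by the classical two-or-more-normal-crossing-components fact (citing Oka), and for $l=1$ observes that the local Milnor fiber inside $S$ has a $(\CC^*)^{p-1}$ factor coming from the coordinates $w_2,\ldots,w_p$ that do not appear in the monomial, so its Euler characteristic vanishes and (by the K\"unneth-type cancellation behind that citation to Oka) the zeta function is $\1$. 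You instead pass to the smooth closure $\bar S$, apply A'Campo there to get $\zeta(\CC_{\bar S})$, and cancel it against $\zeta(\CC_{\bar S\setminus S})$ via inclusion--exclusion over the strata $\bar S_J=\{w_j=0,\ j\in J\}\cap\bar S$. That bookkeeping does close up: strata $J$ containing an index with $m_i>0$ contribute trivially because $\tl f$ vanishes identically there, while the remaining $J\subseteq\{i\leq p\ |\ m_i=0\}$ each contribute exactly $\zeta(\CC_{\bar S})_y$, and the alternating sum collapses to $1$. One small correction: the relevant identity is $\sum_{k=1}^{p-l}(-1)^{k+1}\binom{p-l}{k}=1$ (with $p-l$ the number of vanishing exponents among $w_1,\ldots,w_p$), not $\binom{p-1}{k}$; the two agree only when $l=1$, and when $l=p$ the sum is $0$ but both $\zeta(\CC_{\bar S})_y$ and $\zeta(\CC_{\bar S\setminus S})_y$ are already $1$, so the conclusion still holds. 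What your route buys is a uniform treatment of $l=1$ and $l\geq 2$ using only the normal-crossings zeta formula and the multiplicativity of $\zeta$ over distinguished triangles, without invoking the product structure of the Milnor fiber; what it costs is precisely the inclusion--exclusion bookkeeping you flag as the main obstacle, which the paper sidesteps by quoting Oka's Euler-characteristic argument. Your treatment of part (i) ($p=1$) is the same as the paper's, just spelled out in more detail.
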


\begin{proof}
Set $l:=\sharp \{ 1 \leq i \leq p \ |\ m_i>0\}$. If $l\geq 2$, then by Lemma \ref{lem:4-1} we obtain
\begin{equation}
\zeta_{\tl{f}}\(\CC_{T_{\Delta}^{\prime \prime} \cap \pi^{-1}(T_{\Delta} \cap W)}\)\Big|_{T_{\sigma_0}^{\prime \prime}} = \1_{T_{\sigma_0}^{\prime \prime}}
\end{equation}
(see for example \cite[p.48-49]{Oka-2} etc.). Let us consider the case where $l=1$. If $y \in T_{\sigma_0}^{\prime \prime} \setminus (\bigcap_{j\in I(\Delta)} \{h_j=0\})$ or $y \in (\bigcap_{j\in I(\Delta)} \{h_j=0\}) \cap \{h_k=0\}$, also by Lemma \ref{lem:4-1} we can show that $\zeta_{\tl{f},y}\(\CC_{T_{\Delta}^{\prime \prime}\cap \pi^{-1}( T_{\Delta} \cap W )}\)(t)=1$. By $\d T_{\Delta}^{\prime \prime}-\d T_{\sigma_0}^{\prime \prime}=p$, in a neighborhood of each point of $(\bigcap_{j\in I(\Delta)} \{h_j=0\}) \setminus \{h_k=0\}$, we have
\begin{equation}
\{ \tl{f} =\e \} \cap \( T_{\Delta}^{\prime \prime}\cap \pi^{-1}(T_{\Delta} \cap W)\) \simeq (\CC^*)^{p-1} \times A \qquad (0 <|\e | \ll 1), 
\end{equation}
where $A$ is a constructible set. If $p\geq 2$, the Euler characteristic of $(\CC^*)^{p-1}$ is zero and we can easily prove that the equality
\begin{equation}
\zeta_{\tl{f}}\(\CC_{T_{\Delta}^{\prime \prime} \cap \pi^{-1}(T_{\Delta} \cap W)}\)\Big|_{T_{\sigma_0}^{\prime \prime}} = \1_{T_{\sigma_0}^{\prime \prime}}
\end{equation}
holds (see \cite[p.48-49]{Oka-2} etc.). Finally, consider the case where $l=1$ and $p=1$. In this case, on $U_1 \simeq \CC_y^{n^{\prime}}$ the function $\tl{f}=f_k \circ \pi$ has the form
\begin{equation}
\tl{f}(y)=y_1^{m_1}\cdot (y_{2}^{m_{2}}y_{3}^{m_{3}}\cdots y_{n^{\prime}}^{m_{n^{\prime}}})\cdot f_k^{\sigma_1}(y).
\end{equation}
Then by Lemma \ref{lem:4-1}, for $y \in T_{\sigma_0}^{\prime \prime}$ we can easily prove \eqref{eq:4-26}.
\qed
\end{proof}

Now let us return to the proof of Theorem \ref{thm:3-12}. By the proposition above, in order to calculate $\zeta_{g,\Delta}(t)$, it suffices to consider the values of the $\CC(t)^*$-valued constructible function
\begin{equation}
\zeta_{\tl{f}}\(\CC_{T_{\Delta}^{\prime \prime}\cap \pi^{-1}(T_{\Delta} \cap W)}\)\Big|_{\pi^{-1}(0)} \colon \pi^{-1}(0) \longrightarrow \CC(t)^*
\end{equation}
only on $T_{\sigma_0}^{\prime \prime}$ for $\sigma_0 \in \Sigma$ such that $\relint(\sigma_0) \subset \Int(\Delta^{\vee})$ and $\d \sigma_0=1$. Let us take such a $1$-dimensional cone $\sigma_0 \in \Sigma$. Let $u\neq 0 \in M(\SS \cap \Delta)^*$ be the unique non-zero primitive vector on $\sigma_0$ and for $j \in I(\Delta) \sqcup \{k\}$ set $\gamma(f_j)_u^{\Delta}:=\Gamma(f_j|_{\Delta};u)$. Then $\gamma(f_j)_u^{\Delta}$ is naturally identified with the Newton polytope of the Laurent polynomial $h_j \colon T_{\sigma_0}^{\prime \prime} \longrightarrow \CC$ on $T_{\sigma_0}^{\prime \prime} \simeq (\CC^*)^{\d \Delta-1}$ and by Theorem \ref{thm:2-14} we have
\begin{eqnarray}
\lefteqn{(-1)^{\d \Delta-m(\Delta)}\chi\( \(\bigcap_{j \in I(\Delta)} \{h_j=0\}\) \setminus \{h_k=0\} \)} \nonumber\\
&=& (-1)^{\d \Delta-m(\Delta)}\left\{\chi\(\bigcap_{j \in I(\Delta)}\{h_j=0\}\)-\chi\(\bigcap_{j \in I(\Delta)\sqcup \{k\}}\{h_j=0\}\)\right\}\\
&=& \hspace*{-12mm}\sum_{\begin{subarray}{c}\alpha_1+\cdots +\alpha_{m(\Delta)}=\d \Delta-1 \\ \text{$\alpha_q \geq 1$ for $q \leq m(\Delta)-1$}, \ \alpha_{m(\Delta)} \geq 0 \end{subarray}} \hspace*{-12mm}\Vol_{\ZZ}(\underbrace{\gamma(f_{j_1})_u^{\Delta},\ldots, \gamma(f_{j_1})_u^{\Delta}}_{\text{$\alpha_1$-times}}, \ldots, \underbrace{\gamma(f_{j_{m(\Delta)}} )_u^{\Delta}, \ldots, \gamma(f_{j_{m(\Delta)}} )_u^{\Delta}}_{\text{$\alpha_{m(\Delta)}$-times}}). \hspace*{10mm}\label{eq:4-35}
\end{eqnarray}
Here we set $I(\Delta)\sqcup \{k\}=\{j_1,j_2,\ldots, k=j_{m(\Delta)}\}$. Now recall that we have
\begin{equation}
\Gamma(f_{\Delta}|_{\Delta};u)=\sum_{j \in I(\Delta) \sqcup\{k\}}\gamma(f_j)_u^{\Delta}.
\end{equation}
Hence if $\d \Gamma(f_{\Delta}|_{\Delta};u) <\d \Delta-1$, then all the mixed volumes in \eqref{eq:4-35} vanish and
\begin{equation}
\chi\(\(\bigcap_{j \in I(\Delta)} \{h_j=0\}\)\setminus \{h_k=0\} \)=0.
\end{equation}
This implies that for the calculation of $\zeta_{g,\Delta}(t)=\zeta_{f,0}\(\CC_{T_{\Delta} \cap W}\)(t)\in \CC(t)^*$, we have only to consider the compact faces $\gamma_1^{\Delta}, \gamma_2^{\Delta},\ldots , \gamma_{n(\Delta)}^{\Delta}$ of $\Gamma_+(f_{\Delta}) \cap \Delta$ such that $\d \gamma_i^{\Delta}=\d \Delta-1$ and their normal primitive vectors $u_1^{\Delta}, u_2^{\Delta}, \ldots, u_{n(\Delta)}^{\Delta} \in \Int(\Delta^{\vee}) \cap M(\SS \cap \Delta)^*$.

Summerizing these arguments, we finally obtain
\begin{eqnarray}
\zeta_{g,\Delta}(t)
&=& \zeta_{f,0}\(\CC_{T_{\Delta} \cap W}\)(t)\\
&=& \prod_{i=1}^{n(\Delta)} \(1-t^{d_i^{\Delta}}\)^{(-1)^{\d \Delta -m(\Delta)}K_i^{\Delta}}.
\end{eqnarray}
Since $K_i^{\Delta}=0$ for $m(\Delta)>\d \Delta$ by the definition of $K_i^{\Delta}$, we also have
\begin{equation}
\zeta_{g,0}(t)=\prod_{\begin{subarray}{c}\Gamma_+(f_k) \cap \Delta \neq \emptyset\\ \d \Delta \geq m(\Delta) \end{subarray}} \zeta_{g,\Delta}(t).
\end{equation}
This completes the proof of Theorem \ref{thm:3-12}. \qed

\section{Monodromy zeta functions of torus invariant sheaves}\label{sec:5}

In this section, we generalize our Theorem \ref{thm:3-4} to $T$-invariant constructible sheaves on general toric varieties.

First, let $X$ be a (not necessarily normal) toric variety over $\CC$ and $T\subset X$ the open dense torus which acts on $X$ itself. Let $X=\bigsqcup_{\alpha}X_{\alpha}$ be the decomposition of $X$ into $T$-orbits. 

\begin{definition}\label{dfn:5-1}
\begin{enumerate}
\item We say that a constructible sheaf $\F$ on $X$ is $T$-invariant if $\F|_{X_{\alpha}}$ is a locally constant sheaf of finite rank for any $\alpha$.
\item We say that a constructible object $\F \in \Dbc(X)$ is $T$-invariant if the cohomology sheaf $H^j(\F)$ of $\F$ is $T$-invariant for any $j \in \ZZ$. 
\end{enumerate}
\end{definition}

Note that the so-called $T$-equivariant constructible sheaves on $X$ are $T$-invariant in the above sense. 

From now on, we consider the (not necessarily normal) toric variety $X(\SS)$ and the regular function $f \colon X(\SS) \longrightarrow \CC$ on it considered in Section \ref{sec:3}. We shall freely use the notations in Section \ref{sec:3}. Let $\F\in \Dbc(X(\SS))$ be a $T$-invariant object. Our objective here is to calculate the monodromy zeta function
\begin{equation}
\zeta_f(\F)(t):=\zeta_{f,0}(\F)(t) \in \CC(t)^*
\end{equation}
of $\F \in \Dbc(X(\SS))$ at the $T$-fixed point $0 \in X(\SS)$. Since we have
\begin{equation}
\zeta_f(\F)(t)=\prod_{j \in \ZZ}\zeta_f(H^j(\F))^{(-1)^j},
\end{equation}
we may assume from the first that $\F$ is a $T$-invariant constructible sheaf on $X(\SS)$. For each face $\Delta \prec K(\SS)$ of $K(\SS)$, denote by $T_{\Delta} \subset X(\SS)$ the corresponding $T$-orbit in $X(\SS)$ and consider the decomposition
\begin{equation}
X(\SS)=\bigsqcup_{\Delta \prec K(\SS)} T_{\Delta}
\end{equation}
of $X(\SS)$ into $T$-orbits. For $\Delta \prec K(\SS)$, we denote the local system $\F|_{T_{\Delta}}$ on $T_{\Delta}$ by $\L_{\Delta}$. Let $j_{\Delta} \colon T_{\Delta} \longhookrightarrow X(\SS)$ be the inclusion. Then by Proposition \ref{prp:2-9} we have
\begin{equation}
\zeta_f(\F)(t)=\prod_{\Delta \prec K(\SS)} \zeta_f((j_{\Delta})_!\L_{\Delta})(t).
\end{equation}
In order to calculate the monodromy zeta functions $\zeta_f((j_{\Delta})_!\L_{\Delta})(t)\in \CC(t)^*$ as in the proof of Theorem \ref{thm:3-4}, we need the following elementary propositions.

\begin{proposition}\label{prp:5-2}
Let $\L$ be a local system of rank $r>0$ on $\CC^*=\CC\setminus \{0\}$. Denote by $A \in GL_r(\CC)$ the monodromy matrix of $\L$ along the loop $\{e^{\sqrt{-1}\theta}\ |\ 0 \leq \theta \leq 2\pi\}$ in $\CC^*$, which is defined up to conjugacy. Let $j \colon \CC^* \longhookrightarrow \CC$ be the inclusion.
\begin{enumerate}
\item Set $d:=\d {\rm Ker}(\id-A)$. Then we have
\begin{equation}
H^j(\CC^*;\L) \simeq \begin{cases} \CC^d & (j=0,1),\\ 0 & (\text{otherwise}). \end{cases}
\end{equation}
\item For any $j \in \ZZ$, we have
\begin{equation}
H^j(\CC;j_!\L)\simeq 0.
\end{equation}
\item Let $h$ be a function on $\CC$ defined by $h(z)=z^m$ ($m\in \ZZ_{>0}$) for $z \in \CC$. Then we have
\begin{equation}\label{eq:5-9}
\zeta_{h,0}(j_!\L)(t) =\det (\id -t^mA) \in \CC(t)^*.
\end{equation}
\end{enumerate}
\end{proposition}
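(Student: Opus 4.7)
The plan is to address the three parts in sequence, with (iii) as the substantive step resting on (i), (ii) and Proposition~\ref{prp:2-7-2}. For (i), I would invoke the homotopy equivalence $\CC^* \simeq S^1$: the cohomology of a local system on a circle with monodromy $A$ is computed by the two-term complex $\CC^r \xrightarrow{\id - A} \CC^r$ placed in degrees $0$ and $1$, giving $H^0 = \ker(\id-A)$, $H^1 = \coker(\id-A)$, and zero elsewhere. Since $\id-A$ is a square matrix, both spaces have dimension $d$. For (ii), I would use the standard distinguished triangle $j_!\L \to Rj_*\L \to i_*i^*Rj_*\L \xrightarrow{+1}$ on $\CC$, with $i \colon \{0\} \hookrightarrow \CC$, and apply $R\Gamma(\CC;\ \cdot\ )$. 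The middle term yields $R\Gamma(\CC^*;\L)$, while the third term, the stalk $(Rj_*\L)_0$, is $R\Gamma(D^*;\L)$ for a small punctured disk $D^*$ around $0$. Since $D^* \hookrightarrow \CC^*$ is a homotopy equivalence, the natural restriction map is an isomorphism, forcing $R\Gamma(\CC; j_!\L) = 0$.

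For (iii), by Proposition~\ref{prp:2-7-2} we have $H^j(\psi_h(j_!\L))_0 \simeq H^j(F_0; j_!\L)$, where $F_0$ is the Milnor fiber of $h(z)=z^m$ at $0$. Since $F_0 = h^{-1}(\eta)$ consists of the $m$ distinct points $z_k = \eta^{1/m}\zeta^k$ ($k=0,\ldots,m-1$, $\zeta = e^{2\pi\sqrt{-1}/m}$), all lying in $\CC^*$, only $H^0$ is nonzero, and $H^0(F_0; j_!\L) \simeq \bigoplus_{k=0}^{m-1}\L_{z_k} \simeq \CC^{mr}$. As $\eta$ traces a small positive loop around $0$, each $z_k$ travels along an arc of angle $2\pi/m$ to $z_{k+1 \bmod m}$. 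Trivializing each $\L_{z_k}\simeq\CC^r$ by parallel transport of $\L$ along these arcs (starting from a fixed trivialization of $\L_{z_0}$), the monodromy operator $T$ on $\bigoplus_k \L_{z_k}$ becomes the block companion matrix
\[
T = \begin{pmatrix} 0 & 0 & \cdots & 0 & A \\ I & 0 & \cdots & 0 & 0 \\ 0 & I & \cdots & 0 & 0 \\ \vdots & & \ddots & & \vdots \\ 0 & 0 & \cdots & I & 0 \end{pmatrix},
\]
the corner $A$ appearing because concatenating all $m$ arcs yields exactly one full loop in $\CC^*$.

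A direct inspection of the eigenvalue equation $(\lambda I - T)v = 0$ shows that $\lambda$ is an eigenvalue of $T$ iff $\lambda^m$ is an eigenvalue of $A$, so that $\det(\lambda I - T) = \det(\lambda^m I - A)$. Equivalently, applying the elementary identity $\prod_{k=0}^{m-1}(1 - \alpha\zeta^k) = 1 - \alpha^m$ to each eigenvalue $\lambda_j$ of $A$,
\[
\det(\id - tT) = \prod_{j,k}\bigl(1 - t\,\lambda_j^{1/m}\zeta^k\bigr) = \prod_j \bigl(1 - t^m\lambda_j\bigr) = \det(\id - t^m A),
\]
and since only $H^0$ contributes to $\zeta_{h,0}(j_!\L)(t)$, this gives $\zeta_{h,0}(j_!\L)(t) = \det(\id - tT) = \det(\id - t^m A)$.

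The main obstacle, and really the only non-routine step, is setting up the monodromy action in (iii) with the correct geometric bookkeeping: one must verify that going once around $\eta = 0$ in the base rotates each Milnor fiber point by exactly $2\pi/m$ in $\CC^*$, so that the full loop monodromy of $\L$ appears precisely in the corner of $T$ (and not, say, as $A^m$ or $A^{1/m}$). Once this geometric setup is correctly arranged, parts (i) and (ii) are routine, and the determinant identity reduces to the standard factorization with $m$-th roots of unity.
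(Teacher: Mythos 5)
Your proof is correct and follows essentially the same approach as the paper: part (i) via the two-term complex $\CC^r \xrightarrow{\id-A} \CC^r$, part (ii) from the comparison of $R\Gamma(\CC;j_!\L)$ and $R\Gamma(\CC^*;\L)$, and part (iii) by identifying $\psi_h(j_!\L)_0 \simeq \bigoplus_{k=0}^{m-1}\L_{z_k}$, trivializing via parallel transport along the arcs so that the monodromy becomes the block companion matrix with $A$ in the corner. The only minor difference is that you make explicit the determinant identity $\det(\id-tT)=\det(\id-t^m A)$, which the paper asserts without computation.
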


\begin{proof}
For the proof of (i), see for example \cite[Lemma 3.3]{N-T}. The assertion (ii) is easily obtained from (i). Let us prove (iii). By taking small $\e >0$, for $k=0,1, \ldots , m-1$ set $p_k:=\e e^{\frac{2\pi \sqrt{-1} k}{m}} \in \CC^*$. Then we have an isomorphism 
\begin{equation}
\psi_h(j_! \L )_0 \simeq \bigoplus_{k=0}^{m-1}\L_{p_k}.
\end{equation}
We fix an isomorphism $\L_{p_0}\simeq \CC^r$ and for each $k=1,2, \ldots , m-1$ construct an isomorphism $\L_{p_k}\simeq \L_{p_0}=\CC^r$ by the translation of the sections of $\L$ along the path $\gamma_k\colon [0,1] \longhookrightarrow \CC^*$, $\gamma_k(s)=\e e^{\frac{2\pi \sqrt{-1} k}{m} s}$. Then we obtain an isomorphism 
\begin{equation}
\psi_h(j_! \L )_0 \simeq \bigoplus_{k=0}^{m-1}\L_{p_k} \simeq \CC^{mr}.
\end{equation}
Since the monodromy automorphism of $\psi_h(j_! \L )_0$ corresponds to the matrix 
\begin{equation}
\begin{pmatrix}
O & O & \ldots &O & A\\
\id & O & \ldots &O & O\\
O & \id & \ddots &\ddots & O\\
\vdots & \ddots  & \ddots & \ddots &\vdots \\
O & \ldots & \ldots & \id & O
\end{pmatrix}
\in GL_{mr}(\CC)
\end{equation}
via this isomorphism $\psi_h(j_! \L )_0 \simeq \CC^{mr}$, we obtain \eqref{eq:5-9}. \qed
\end{proof}

\begin{proposition}\label{prp:5-3}
Let $\L$ be a local system on $(\CC^*)^k$ and $j \colon (\CC^*)^k \longhookrightarrow \CC^k$ the inclusion. Let $h \colon \CC^k \longrightarrow \CC$ be a function on $\CC^k$ defined by $h(z)=z_1^{m_1}z_2^{m_2}\cdots z_k^{m_k} (\not\equiv 1)$ ($m_i \in \ZZ_{\geq 0}$) for $z\in \CC^k$. If $k \geq 2$, the monodromy zeta function $\zeta_{h,0}(j_!\L)(t)$ (resp. $\zeta_{h,0}(Rj_*\L)(t)$) of $j_!\L \in \Dbc(\CC^k)$ (resp. $Rj_*\L \in \Dbc(\CC^k)$) at $0 \in \CC^k$ is $1 \in \CC(t)^*$.
\end{proposition}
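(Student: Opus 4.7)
By Proposition \ref{prp:2-7-2}, I would work with $\RG(F_{0}; j_{!}\L|_{F_{0}})$ and its monodromy, where $F_{0}$ is the Milnor fiber of $h$ at $0 \in \CC^{k}$. Let $I := \{i : m_{i} > 0\}$ and $J := \{i : m_{i} = 0\}$. Since $h$ depends only on the $I$-coordinates, one has $F_{0} \simeq F_{0}^{I} \times B_{J}$ up to homotopy, where $F_{0}^{I}$ is the Milnor fiber of $\prod_{i \in I} z_{i}^{m_{i}}$ at $0 \in \CC^{I}$ and $B_{J} \subset \CC^{J}$ is a small polydisk on which the geometric monodromy acts trivially. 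If $J \neq \emptyset$, then via the Leray spectral sequence for the projection $p \colon F_{0} \longrightarrow F_{0}^{I}$, each stalk of $Rp_{*}(j_{!}\L|_{F_{0}})$ on $F_{0}^{I}$ takes the form $\RG(B_{J}; \tilde{j}_{!}\tilde{\L})$ for some local system $\tilde{\L}$ on $B_{J} \cap (\CC^{*})^{J}$. Iterating Proposition \ref{prp:5-2}(ii) through successive projections onto the coordinate factors of $B_{J}$, this cohomology vanishes; hence $\psi_{h}(j_{!}\L)_{0} = 0$ and $\zeta_{h,0}(j_{!}\L)(t) = 1$.

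In the remaining case $J = \emptyset$, so $k = |I| \geq 2$, we have $F_{0} \subset (\CC^{*})^{k}$ and $(j_{!}\L)|_{F_{0}} = \L|_{F_{0}}$. The key observation is that since $k \geq 2$, the diophantine equation $\sum a_{i} m_{i} = 0$ admits a non-trivial integer solution $(a_{1}, \ldots, a_{k}) \in \ZZ^{k}$, producing a $\CC^{*}$-subgroup $\mu \subset (\CC^{*})^{k}$ that preserves $h$ and acts freely on $F_{0}$ commuting with the geometric monodromy $T$. Hence $p \colon F_{0} \longrightarrow F_{0}/\mu$ is a $\CC^{*}$-bundle with a compatible $T$-action. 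Writing $A$ for the fiber monodromy of $\L$ around the $\mu$-orbits, the exact sequence
\begin{equation*}
0 \longrightarrow R^{0}p_{*}\L \longrightarrow p_{*}\L \overset{A-\id}{\longrightarrow} p_{*}\L \longrightarrow R^{1}p_{*}\L \longrightarrow 0
\end{equation*}
together with multiplicativity of $\zeta$ along it yields $\zeta(\RG(F_{0}/\mu; R^{0}p_{*}\L))= \zeta(\RG(F_{0}/\mu; R^{1}p_{*}\L))$. Combined with the Leray spectral sequence for $p$ converging to $\RG(F_{0}; \L)$, this forces $\zeta_{h,0}(j_{!}\L)(t) = 1$.

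For the $Rj_{*}$ version, I would invoke the distinguished triangle $j_{!}\L \longrightarrow Rj_{*}\L \longrightarrow i_{*}i^{-1}Rj_{*}\L \overset{+1}{\longrightarrow}$, where $i \colon \CC^{k} \setminus (\CC^{*})^{k} \longhookrightarrow \CC^{k}$. Multiplicativity of $\zeta$ reduces the problem to computing $\zeta_{h,0}(i_{*}i^{-1}Rj_{*}\L)(t)$. Stratifying the complement $\CC^{k} \setminus (\CC^{*})^{k}$ by smaller torus orbits and using that $h$ restricts on the closure of each orbit to a monomial in fewer variables, induction on $k$ shows that this factor is also trivial.

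The main technical obstacle is the analysis in the case $J = \emptyset$: the nearby cycle stalk $\psi_{h}(j_{!}\L)_{0}$ is genuinely non-zero, and the cancellation to $\zeta = 1$ depends on combining the $\CC^{*}$-equivariant Leray spectral sequence with the four-term exact sequence above. The delicate point is tracking the compatibility of the fiber monodromy $A$ with the geometric monodromy $T$ so as to ensure that the two Leray-fiber contributions to $\zeta$ cancel exactly in each cohomological degree.
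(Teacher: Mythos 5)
The approach is genuinely different from the paper's, but it has several gaps that keep it from being a complete argument. The paper instead (i) uses a monomial change of variables by a unimodular matrix to reduce $h$ to $h'(w)=w_1^m$, (ii) invokes Kashiwara's non-characteristic deformation lemma to identify the cohomology of the local Milnor fiber with that of the global fiber $h^{-1}(x)$, (iii) uses a homotopy lemma to replace the geometric monodromy by one rotating only $w_1$, and (iv) concludes from the fact that each connected component of $h'^{-1}(x)$ is $(\CC^*)^{l-1}\times\CC^{k-l}$, whose cohomology with $j'_!\L'$-coefficients either vanishes (if $l<k$, by Proposition~\ref{prp:5-2}(ii)) or has Euler characteristic zero (if $l=k\geq 2$, by K\"unneth and Proposition~\ref{prp:5-2}(i)).

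Your $J=\emptyset$ case has the following problem: the one-parameter subgroup $\mu\simeq\CC^*$ you construct does preserve the global fiber $h^{-1}(x)$, but it does \emph{not} preserve the local Milnor fiber $F_0=h^{-1}(x)\cap B(0;\e)$, because a non-compact $\CC^*$-action never preserves a bounded ball. To salvage this you would either need to replace $\mu$ by the compact subgroup $S^1\subset\mu$ (which does preserve $F_0$ since $|e^{ia_j\theta}z_j|=|z_j|$), or first establish the isomorphism $\RG(F_0;j_!\L)\simeq\RG(h^{-1}(x);j_!\L)$ as the paper does via \cite[Proposition 2.7.2]{K-S}. You assert neither. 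Even granting this, the four-term exact sequence you write is not an exact sequence of sheaves on $F_0/\mu$: the symbol ``$p_*\L$'' there would have to denote a local system on $F_0/\mu$ with full stalk $\L_x$, which is not $R^0p_*\L$ (that already equals the invariants $\ker(A-\id)$). Such an object can be constructed as an associated local system of the $S^1$-bundle, but this requires an argument, and your notation conflates it with the ordinary direct image. Without this, multiplicativity of $\zeta$ over the sequence is not justified.

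Your $Rj_*$ argument via the triangle $j_!\L\to Rj_*\L\to i_*i^{-1}Rj_*\L\overset{+1}{\to}$ has a more serious obstruction: induction on dimension does not close. The complex $i^{-1}Rj_*\L$ is supported on the union of coordinate hyperplanes and its restriction to the $1$-dimensional torus strata $\{z_j\neq 0,\ z_i=0\ (i\neq j)\}$ is a nonzero local system, while $h$ restricts there to $z_j^{m_j}$. Proposition~\ref{prp:5-2}(iii) then gives a zeta factor $\det(\id-t^{m_j}A)$ which is \emph{not} $1$ in general. The hypothesis $k\geq 2$ is what makes the proposition true, so you cannot descend to $k=1$ strata and expect each piece to contribute trivially; the cancellation among strata is a global phenomenon and must be argued directly, not stratum-by-stratum. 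The paper sidesteps this by running the same change-of-variables argument for $Rj_*\L$ in place of $j_!\L$, where the relevant component $(\CC^*)^{l-1}\times\CC^{k-l}$ still has Euler characteristic zero with $Rj'_*\L'$-coefficients.

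Finally, in the $J\neq\emptyset$ case the claimed Leray computation that ``each stalk of $Rp_*(j_!\L|_{F_0})$ on $F_0^I$ takes the form $\RG(B_J;\tilde{j}_!\tilde\L)$'' requires care, since $p$ is not proper and stalks of derived pushforward along non-proper maps do not commute with restriction in general; one should use $Rp_!$ or impose local cone-like conditions to make this legitimate. The underlying idea (iterated vanishing from Proposition~\ref{prp:5-2}(ii)) is sound, but as written the step is not justified.
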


\begin{proof}
Since the proof of $\zeta_{h,0}(Rj_*\L)(t)\equiv 1$ is similar, we prove only $\zeta_{h,0}(Rj_!\L)(t)\equiv 1$. Let $F_0$ be the Milnor fiber of $h$ at $0 \in \CC^k$. Then there exist $\e_0, \eta_0 >0$ with $0< \eta_0 \ll \e_0$ such that the restriction
\begin{equation}
B(0; \e_0) \cap h^{-1}(D_{\eta_0}^*) \longrightarrow D_{\eta_0}^*
\end{equation}
of $h$ is a fiber bundle over the punctured disk $D_{\eta_0}^*=\{ x \in \CC \ | \ 0<|x|<\eta_0\}$ with fiber $F_0$. Furthermore, by using the special form of $h$, we may replace the above constant $\e_0, \eta_0 >0$ so that there exists also an isomorphism
\begin{equation}\label{isom}
\RG (h^{-1}(x) ; j_!\L ) \simto \RG (h^{-1}(x) \cap B(0; \e_0) ; j_!\L )
\end{equation}
for any $x \in D_{\eta_0}^*$. Indeed, this isomorphism can be obtained by applying Kashiwara's non-characteristic deformation lemma (\cite[Proposition 2.7.2]{K-S}) to the constructible sheaf $(j_!\L)|_{h^{-1}(x)}$ on the complex manifold $h^{-1}(x)$. Set $\F =(j_!\L)_{ \overline{B(0; \e_0)}} \in \Db(\CC^k)$. Then for any $j \in \ZZ$ the cohomology sheaf $H^j(Rh_*\F)$ is a local system on $D_{\eta_0}^*$ and via the isomorphism
\begin{equation}
H^j(\psi_h(j_! \L))_0 \simeq H^j(F_0 ;j_!\L) \simeq H^j(Rh_*\F)_x \qquad (x \in D_{\eta_0}^*) 
\end{equation}
(obtained by Proposition \ref{prp:2-7-2}) the monodromy automorphism of $H^j(\psi_h(j_!\L))_0$ corresponds to the one $Q_{j,x}\colon H^j(Rh_*\F)_x \simto H^j(Rh_*\F)_x$ obtained by the translation of the sections of the local system $H^j(Rh_*\F)_x$ along the path $\gamma_x \colon [0,1] \longrightarrow D_{\eta_0}^*$, $\gamma_x(s)=e^{2\pi \sqrt{-1} s}x$ (see the discussions just after \cite[Proposition 4.2.2]{Dimca}). This automorphism $Q_{j,x}$ can be functorially constructed as follows. First, define a morphism $\tl{\Psi}\colon [0,1] \times D_{\eta_0}^* \longrightarrow D_{\eta_0}^* $ by $\tl{\Psi}(s,x)=e^{2 \pi \sqrt{-1} s}x$ and let $\pi \colon [0,1] \times D_{\eta_0}^* \longrightarrow D_{\eta_0}^*$ be the projection. For $q=0,1$, let $i_q\colon D_{\eta_0}^* \simeq \{q\} \times D_{\eta_0}^* \longhookrightarrow [0,1] \times D_{\eta_0}^*$ be the inclusion and set $\Psi_q=\tl{\Psi}(q,*) =\tl{\Psi}\circ i_q\colon D_{\eta_0}^*\simto D_{\eta_0}^*$. Note that $\Psi_0=\Psi_1=\id_{D_{\eta_0}^*}$ in this case. Then for $q=0,1$ we obtain an isomorphism
\begin{equation}
R\pi_*\tl{\Psi}^{-1}Rh_*\F \simto R\pi_* (i_q)_*(i_q)^{-1} \tl{\Psi}^{-1}Rh_*\F \simeq \Psi_q^{-1}Rh_*\F
\end{equation}
in $\Dbc (D_{\eta_0}^*)$. Hence by setting $\Psi :=\Psi_1$ we obtain an automorphism of $Rh_*\F$
\begin{equation}
Rh_*\F \simto R\Psi_*\Psi^{-1}Rh_*\F \simto R\Psi_*\Psi_0^{-1}Rh_*\F =Rh_*\F
\end{equation}
which induces $Q_{j,x}$. Similarly, define a morphism $\tl{\Phi}\colon [0,1] \times \CC^k \longrightarrow \CC^k$ by
\begin{equation}
\tl{\Phi}(s,z)=\left(e^{\frac{2\pi \sqrt{-1} }{md}s}z_1, e^{\frac{2\pi \sqrt{-1}}{md}s}z_2, \ldots , e^{\frac{2\pi \sqrt{-1}}{md}s}z_l, z_{l+1}, \ldots , z_k\right)
\end{equation}
and let $\varpi \colon [0,1] \times \CC^k \longrightarrow \CC^k$ be the projection. For $q=0,1$, set $\Phi_q=\tl{\Phi}(q,*)\colon \CC^k \longrightarrow \CC^k$. In this case, we have $\Phi_0=\id_{\CC^k}$, and $\Phi_1$ induces the monodromy automorphisms of the global Milnor fiber $h^{-1}(x)$ and the local one $F_0 =h^{-1}(x)\cap B(0; \e_0)$ for any $x \in D_{\eta_0}^*$. Then by setting $\Phi :=\Phi_1\colon \CC^k \simto \CC^k$ we obtain also isomorphisms
\begin{equation}
\Phi^{-1}j_!\L \simot R\varpi_* \tl{\Phi}^{-1}j_!\L \simto \Phi_0^{-1}j_!\L =j_!\L
\end{equation}
and hence an automorphism $R_{j,x}$ of $H^j(F_0;j_!\L)$ defined by
\begin{equation}
H^j(F_0;j_!\L) \longrightarrow H^j(F_0; R\Phi_*\Phi^{-1}j_!\L)\simeq H^j(F_0; R\Phi_*\Phi_0^{-1}j_!\L)=H^j(F_0;j_!\L).
\end{equation}
By the functorial constructions of $Q_{j,x}$ and $R_{j,x}$, we can easily check that via the isomorphism $H^j(Rh_* \F)_x \simeq H^j(F_0;j_!\L)$ the automorphism $Q_{j,x}$ corresponds to $R_{j,x}$. Therefore, in order to calculate the monodromy zeta function $\zeta_{h,0}(j_!\L)(t)$ it suffices to calculate the one for the automorphisms $R_{j,x}\colon H^j(F_0;j_!\L) \simto H^j(F_0;j_!\L)$ induced by the isomorphism $\Phi^{-1}j_!\L \simto j_!\L $. Moreover, by the isomorphism \eqref{isom}, we have only to calculate the zeta function for the automorphisms of $H^j(h^{-1}(x) ;j_!\L)$ induced also by $\Phi^{-1}j_!\L \simto j_!\L$.

Now, without loss of generality, we may assume that there exists $1 \leq l \leq k$ such that $m_1, m_2, \ldots , m_l >0$ and $m_{l+1}= \cdots =m_k=0$. Moreover, by replacing exponents, we may assume also that $h(z)=(z_1^{m_1}\cdots z_l^{m_l})^m$ with $m\geq 1$ and ${\rm gcd}(m_1,\ldots,m_l)=1$. Set $d:=m_1+\cdots +m_l$. Let $M=(m_{i,j})\in {\rm SL}(l;\ZZ)$ be a unimodular matrix such that $m_{1,j}=m_j$ for $j=1,\ldots,l$. We define an isomorphism $\Lambda_M \colon (\CC^*)^l\times \CC^{k-l} \simto (\CC^*)^l\times \CC^{k-l}$ by
\begin{equation}
w=\Lambda_M(z)=(z_1^{m_{1,1}}\cdots z_l^{m_{1,l}}, \ldots, z_1^{m_{l,1}}\cdots z_l^{m_{l,l}}, z_{l+1}, \ldots, z_k)
\end{equation}
and an isomorphism $\Phi^{\prime} \colon \CC^k \simto \CC^k$ by
\begin{equation}
\Phi^{\prime}(w_1,\ldots,w_k)=\(e^{\frac{2\pi \sqrt{-1}}{m}}w_1,e^{\frac{2\pi \sqrt{-1} d_2}{md}}w_2,\ldots, e^{\frac{2\pi \sqrt{-1} d_l}{md}}w_l, w_{l+1},\ldots, w_k\),
\end{equation}
where we set $d_i:=m_{i,1}+\cdots+m_{i,l}$ for $i=2,\ldots ,l$. Moreover we define a function $h^{\prime} \colon (\CC^*)^l \times \CC^{k-l} \longrightarrow \CC$ by $h^{\prime}(w)=w_1^m$. Then we have
\begin{gather}
\Lambda_M(h^{-1}(x))=h^{\prime -1}(x)\qquad (x \in D_{\eta_0}^*), \\
\Lambda_M\circ (\Phi|_{(\CC^*)^l\times \CC^{k-l}})=(\Phi^{\prime}|_{(\CC^*)^l\times \CC^{k-l}}) \circ \Lambda_M.
\end{gather}

Let $j^{\prime} \colon (\CC^*)^k \longhookrightarrow (\CC^*)^l \times \CC^{k-l}$ be the inclusion and consider the local system $\L^{\prime}:=(\Lambda_{M}|_{(\CC^*)^k})_*\L$ on $(\CC^*)^k$. Similarly to $R_{j,x}$, we can construct an automorphism $R^{\prime}_{j,x}$ of $H^j(h^{\prime -1}(x);j^{\prime}_!\L^{\prime})$ by constructing an isomorphism $\Phi^{\prime -1}j^{\prime}_!\L^{\prime} \simto j^{\prime}_!\L^{\prime}$. Then via the natural isomorphism $H^j(F_0 ;j_!\L) \simeq H^j(h^{-1}(x);j_!\L) \simeq H^j(h^{\prime -1}(x);j^{\prime}_!\L^{\prime})$ induced by $\Lambda_M$ the automorphism $R_{j,x}$ corresponds to $R^{\prime}_{j,x}$. Define an automorphism $\Phi^{\prime \prime} \colon \CC^k \simto \CC^k$ by
\begin{equation}
\Phi^{\prime \prime}(w_1,\ldots,w_k)=\(e^{\frac{2\pi \sqrt{-1}}{m}}w_1,w_2,\ldots, w_k\). 
\end{equation}
Then we can construct also an automorphism $R^{\prime \prime}_{j,x}$ of $H^j(h^{\prime -1}(x);j^{\prime}_!\L^{\prime})$ by constructing an isomorphism $\Phi^{\prime \prime -1}j^{\prime}_!\L^{\prime} \simto j^{\prime}_!\L^{\prime}$. We define a homotopy morphism $\Theta \colon h^{\prime -1}(x) \times [0,1] \longrightarrow h^{\prime -1}(x)$ such that $\Theta(\cdot,0)= \Phi^{\prime \prime}|_{h^{\prime -1}(x)}$ and $\Theta(\cdot,1)=\Phi^{\prime}|_{h^{\prime -1}(x)}$ by 
\begin{equation}
\Theta(w,s)=\(e^{\frac{2\pi \sqrt{-1}}{m}}w_1,e^{\frac{2\pi \sqrt{-1}d_2}{md}s}w_2,\ldots, e^{\frac{2\pi \sqrt{-1} d_l}{md}s}w_l,w_{l+1}, \ldots, w_k\).
\end{equation}
Let $p\colon h^{\prime -1}(x) \times [0,1] \longrightarrow h^{\prime -1}(x)$ be the projection. Then similarly to $\Phi^{-1}j_!\L \simto j_!\L$, we can construct a natural isomorphism $\Theta^{-1}(j^{\prime}_!\L^{\prime}|_{h^{\prime -1}(x)}) \simto p^{-1}(j^{\prime}_!\L^{\prime}|_{h^{\prime -1}(x)})$. By applying Lemma \ref{lem:5-3-2} below to $Y=h^{\prime -1}(x)$ and $\Theta$, we get $R^{\prime}_{j,x}=R^{\prime \prime}_{j,x}$ and
\begin{equation}
\zeta_{h,0}(j_!\L)(t)=\prod_{j=0}^{\infty} \det(\id -tR^{\prime \prime}_{j,x})^{(-1)^j}.
\end{equation}
Note that each connected component $K$ of $h^{\prime -1}(x)$ is isomorphic to $(\CC^*)^{l-1} \times \CC^{k-l}$. Moreover by our assumption $k \geq 2$ and Proposition \ref{prp:5-2} the Euler-Poincar{\'e} index $\chi(\RG(K;j^{\prime}_!\L^{\prime}))$ of $\RG(K;j^{\prime}_!\L^{\prime})$ is zero. Then the result follows from the classical arguments as in \cite[Example (3.7)]{Oka-2} and the K{\" u}nneth formula. This completes the proof. \qed
\end{proof}

\begin{lemma}\label{lem:5-3-2}
Let $f_0,f_1 \colon Y \longrightarrow X$ be two morphisms of topological spaces. Set $I=[0,1]$ and let $p_Y \colon Y \times I \longrightarrow Y$ be the projection. Assume that there exists a homotopy morphism $\Theta \colon Y \times I \longrightarrow X$ between $f_0$ and $f_1$ such that $\Theta(\cdot ,q)=f_q$ for $q=0,1$. For $\F\in \Db(Y)$ and $\G \in \Db(X)$, assume that there exists an isomorphism $\Phi \colon \Theta^{-1}\G \simto p_Y^{-1}\F$. For $q=0,1$, let $f_q^{\sharp} \colon \RG(X;\G) \longrightarrow \RG(Y;\F)$ be the morphism obtained by
\begin{equation}
\xymatrix@C=12mm{
\RG(X;\G) \ar[r] &\RG(X; Rf_{q *} f_q^{-1}\G) \simeq\RG(Y;f_q^{-1}\G) \ar[r]^{\hspace*{20mm}\Phi |_{Y \times \{ q\} } } & \RG(Y;\F)}.
\end{equation}
Then we have $f_0^{\sharp}=f_1^{\sharp}$.
\end{lemma}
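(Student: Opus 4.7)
The plan is to factor both maps $f_0^\sharp$ and $f_1^\sharp$ through the common object $\RG(Y \times I;\, p_Y^{-1}\F)$, so that the only remaining $q$-dependence lies in the restriction $i_q^*$ to $Y \times \{q\}$; it will then suffice to show that these two restrictions agree on $\RG(Y \times I;\, p_Y^{-1}\F)$. Throughout one uses the identities $p_Y \circ i_q = \id_Y$ and $\Theta \circ i_q = f_q$ for $q=0,1$.

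For the factorization step, I would rewrite the adjunction unit $\G \to Rf_{q*}f_q^{-1}\G$ via $f_q = \Theta \circ i_q$ as the composition
\begin{equation*}
\G \longrightarrow R\Theta_*\Theta^{-1}\G \longrightarrow R\Theta_* Ri_{q*}i_q^{-1}\Theta^{-1}\G = Rf_{q*}f_q^{-1}\G
\end{equation*}
of the units for $\Theta$ and $i_q$. Taking $\RG(X;\ \cdot\ )$, post-composing with $\Phi|_{Y\times\{q\}} = i_q^{-1}\Phi$, and using naturality of $i_q^{-1}$ applied to the morphism $\Phi$ together with $i_q^{-1}p_Y^{-1}\F = \F$, one identifies $f_q^\sharp$ with
\begin{equation*}
\RG(X;\G) \xrightarrow{\Theta^*} \RG(Y\times I;\Theta^{-1}\G) \xrightarrow{\Phi_*} \RG(Y\times I;\,p_Y^{-1}\F) \xrightarrow{r_q} \RG(Y;\F),
\end{equation*}
where $r_q := i_q^*$ is restriction to $Y\times\{q\}$. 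Only the last arrow depends on $q$.

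It then remains to show $r_0 = r_1$ on $\RG(Y\times I;\,p_Y^{-1}\F)$. The key input is the homotopy-invariance isomorphism $p_Y^* \colon \RG(Y;\F) \simto \RG(Y\times I;\,p_Y^{-1}\F)$: since $I=[0,1]$ is compact and contractible, proper base change gives $(Rp_{Y*}p_Y^{-1}\F)_y \simeq \RG(I;\,(\F_y)_I) \simeq \F_y$ for every $y \in Y$, so the adjunction unit $\F \to Rp_{Y*}p_Y^{-1}\F$ is an isomorphism. From $p_Y \circ i_q = \id_Y$ one gets $r_q \circ p_Y^* = \id$, so $r_q = (p_Y^*)^{-1}$ is independent of $q$; combined with the previous paragraph this yields $f_0^\sharp = f_1^\sharp$. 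The only slightly delicate verification is that the rewritten formula for $f_q^\sharp$ really agrees with the original definition once $\Phi$ is applied, but this is a tautology from naturality of adjunction units, so no genuine obstacle arises and the proof remains short and formal.
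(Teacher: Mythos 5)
Your proof is correct and follows essentially the same route as the paper's: factor $f_q^\sharp$ through $\RG(Y\times I;\Theta^{-1}\G)\xrightarrow{\Phi}\RG(Y\times I;p_Y^{-1}\F)$, and observe that the remaining restriction map $i_q^*$ is the (hence $q$-independent) inverse of the homotopy-invariance isomorphism $p_Y^*\colon\RG(Y;\F)\simto\RG(Y\times I;p_Y^{-1}\F)$. The paper presents this as a single commutative diagram without spelling out the proper-base-change justification for $p_Y^*$ being an isomorphism, which your write-up supplies explicitly.
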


\begin{proof}
For $q=0,1$, let $i_q \colon Y \simeq Y \times \{ q\} \longhookrightarrow Y \times I$ be the embedding. Then we obtain the following commutative diagram:
\begin{equation}
\xymatrix@C=11mm@R=6mm{
\RG(X;\G) \ar[r] \ar[rd] & \RG(Y\times I; \Theta^{-1}\G) \ar[r]^{\Phi} \ar[d] & \RG(Y\times I; p_Y^{-1}\F) \ar[d] & \RG(Y;\F) \ar[l]_{\hspace{5mm}\sim}\\
& \RG(Y;i_q^{-1}\Theta^{-1}\G) \ar[r]^{\Phi |_{Y \times \{ q\} } } & \RG(Y;i_q^{-1}p_Y^{-1}\F). \ar@{=}[ru]}
\end{equation}
This proves the lemma.\qed
\end{proof}

With these propositions at hands, we can prove the following explicit formula for $\zeta_f(\F)(t)\in \CC(t)^*$ as in the same way as the proof of Theorem \ref{thm:3-4}. For each $\Delta \prec K(\SS)$ by fixing an isomorphism $M(\SS \cap \Delta) \simeq \ZZ^{\d \Delta}$, we obtain an isomorphism $T_{\Delta}=\Spec(\CC[M(\SS \cap \Delta)]) \simeq (\CC^*)^{\d \Delta}$. We regard $\L_{\Delta}$ as a local system of rank $r_{\Delta}$ on $(\CC^*)^{\d \Delta}$ via this isomorphism and denote by $A_j^{\Delta} \in GL_{r_{\Delta}}(\CC)$ ($j=1,2,\ldots, \d \Delta$) the monodromy matrices of $\L_{\Delta}$ along the loops
\begin{equation}
\{(1,1,\ldots, 1, \overset{j}{\check{e^{\sqrt{-1}\theta}}}, 1,\ldots, 1)\in (\CC^*)^{\d \Delta}\ |\ 0 \leq \theta \leq 2\pi\}
\end{equation}
in $(\CC^*)^{\d \Delta} \simeq T_{\Delta}$, which are determined up to conjugacy. Note that the matrices $A_1^{\Delta}, A_2^{\Delta}, \ldots, A_{\d \Delta}^{\Delta}$ mutually commute. Finally by using the inner conormal vectors $u_1^{\Delta}, u_2^{\Delta},\ldots, u_{n(\Delta)}^{\Delta} \in M(\SS \cap \Delta)^*$ of the compact faces $\gamma_1^{\Delta}, \gamma_2^{\Delta}, \ldots, \gamma_{n(\Delta)}^{\Delta}$ of $\Gamma_+(f) \cap \Delta$ introduced in Section \ref{sec:3} we set
\begin{equation}
B_i^{\Delta}:=\prod_{j=1}^{\d \Delta} (A_j^{\Delta})^{u_{i,j}^{\Delta}} \in GL_{r_{\Delta}}(\CC)
\end{equation}
for $1 \leq i \leq n(\Delta)$, where $(u_{i,1}^{\Delta}, u_{i,2}^{\Delta}, \ldots, u_{i,\d \Delta}^{\Delta})\in \ZZ^{\d \Delta}$ is the image of $u_i^{\Delta}$ by the isomorphism $M(\SS \cap \Delta)^*\simeq \ZZ^{\d \Delta}$.

\begin{theorem}\label{thm:5-4}
Assume that $f=\sum_{v \in \SS} a_v \cdot v \in \CC[\SS]$ is non-degenerate. Then the monodromy zeta function $\zeta_f(\F)(t)=\zeta_{f,0}(\F)(t)\in \CC(t)^*$ of the $T$-invariant constructible sheaf $\F$ at $0 \in X(\SS)$ is given by
\begin{equation}
\zeta_f(\F)(t)=\prod_{\Gamma_+(f) \cap \Delta \neq \emptyset} \zeta_{f,\Delta}(\F)(t),
\end{equation}
where for each face $\Delta \prec K(\SS)$ of $K(\SS)$ such that $\Gamma_+(f) \cap \Delta\neq \emptyset$ we set
\begin{equation}
\zeta_{f,\Delta}(\F)(t)=\prod_{i=1}^{n(\Delta)}\det (\id-t^{d_i^{\Delta}}B_i^{\Delta})^{(-1)^{\d \Delta-1}\Vol_{\ZZ}(\gamma_i^{\Delta})}.
\end{equation}
\end{theorem}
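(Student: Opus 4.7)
The plan is to adapt the proof of Theorem \ref{thm:3-4} (obtained as a special case of Theorem \ref{thm:3-12}) while keeping track of monodromy data coming from the local systems $\L_\Delta = \F|_{T_\Delta}$. Since $\zeta_f$ is multiplicative on distinguished triangles, one reduces immediately to the case where $\F$ is a single $T$-invariant constructible sheaf. Using the decomposition $X(\SS) = \bigsqcup_{\Delta \prec K(\SS)} T_\Delta$ and Proposition \ref{prp:2-9} applied inductively to the inclusions of closures of orbits, one obtains
\begin{equation}
\zeta_f(\F)(t) = \prod_{\Delta \prec K(\SS)} \zeta_f\bigl((j_\Delta)_! \L_\Delta\bigr)(t).
\end{equation}
For faces $\Delta$ with $\Gamma_+(f) \cap \Delta = \emptyset$ the function $f$ vanishes identically on $\overline{T_\Delta}$, so $\psi_f((j_\Delta)_!\L_\Delta) = 0$ and the corresponding factor is $1$. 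Hence it suffices to compute $\zeta_f((j_\Delta)_!\L_\Delta)(t)$ for each $\Delta$ with $\Gamma_+(f) \cap \Delta \neq \emptyset$.

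Fix such a $\Delta$ and set $f_\Delta := f \circ i_\Delta$. I would now mimic the toric-resolution step in the proof of Theorem \ref{thm:3-12}: construct a smooth toric variety $X_\Sigma$ together with a proper $T_\Delta^{\prime\prime}$-equivariant morphism $\pi \colon X_\Sigma \longrightarrow X(\SS\cap\Delta)$ subordinate to the dual fan of $\Gamma_+(f) \cap \Delta$ in $\Delta^\vee$, such that $\pi|_{T_\Delta^{\prime\prime}}$ is an isomorphism onto $T_\Delta$. Let $\L_\Delta^{\prime\prime}$ be the pullback of $\L_\Delta$ to $T_\Delta^{\prime\prime}$ and $j^{\prime\prime} \colon T_\Delta^{\prime\prime} \longhookrightarrow X_\Sigma$ the inclusion. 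Then $R\pi_*\, j^{\prime\prime}_!\L_\Delta^{\prime\prime} \simeq (j_\Delta)_!\L_\Delta$, and by Proposition \ref{prp:2-9} the computation reduces to evaluating the pointwise values of $\zeta_{f\circ\pi}(j^{\prime\prime}_!\L_\Delta^{\prime\prime})$ on the preimage $\pi^{-1}(0)$. The latter is a union of $T_{\sigma_0}^{\prime\prime}$ with $\relint(\sigma_0) \subset \Int(\Delta^\vee)$.

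On a fixed affine chart $U_1 \simeq \CC^{n^{\prime}}$ around such a stratum, the pullback $f \circ \pi$ has the monomial form $y_1^{m_1}\cdots y_{n^{\prime}}^{m_{n^{\prime}}} \cdot f_k^{\sigma_1}(y)$ with $f_k^{\sigma_1}$ a unit along $T_{\sigma_0}^{\prime\prime}$ by the non-degeneracy hypothesis. Using Proposition \ref{prp:5-3} together with a K\"unneth-type argument, every stratum with $\d \sigma_0 \geq 2$ contributes trivially; only $1$-dimensional cones $\sigma_0$ survive. For a $1$-dimensional cone generated by a primitive vector $u \in \Int(\Delta^\vee) \cap M(\SS\cap\Delta)^*$, Proposition \ref{prp:5-2}(iii) yields, at a point $y \in T_{\sigma_0}^{\prime\prime}$ lying over the Milnor fiber of $f|_{T_\Delta}$,
\begin{equation}
\zeta_{f\circ\pi, y}\bigl(j^{\prime\prime}_!\L_\Delta^{\prime\prime}\bigr)(t) = \det\!\bigl(\id - t^{m_u} B_u\bigr),
\end{equation}
where $m_u = \min_{w \in \Gamma_+(f)\cap\Delta}\langle u,w\rangle$ and $B_u$ is the monodromy matrix of $\L_\Delta$ along the image loop in $T_\Delta$. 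The key identification is that if one chooses the basis dual to a completion of $u$ to a basis of $M(\SS\cap\Delta)^*$, the loop around $y_1 = 0$ is sent under $\pi|_{T_\Delta^{\prime\prime}}$ to the loop in $T_\Delta \simeq (\CC^*)^{\d\Delta}$ that winds $u_j$ times around the $j$-th factor; hence $B_u = \prod_j (A_j^\Delta)^{u_j}$, which is conjugate to $B_i^\Delta$ when $u = u_i^\Delta$.

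Summing over $y$ in the slice via an Euler-characteristic computation as in Theorem \ref{thm:3-12}, only those $u$ realizing a conormal to a top-dimensional compact face $\gamma_i^\Delta$ of $\Gamma_+(f)\cap\Delta$ contribute, with multiplicity equal to the Euler characteristic
\begin{equation}
(-1)^{\d\Delta-1}\,\chi\bigl(\{L(f_{\gamma_i^\Delta}) = 1\} \cap (\CC^*)^{\d\Delta-1}\bigr) = (-1)^{\d\Delta-1}\Vol_\ZZ(\gamma_i^\Delta)
\end{equation}
by Theorem \ref{thm:2-14} applied to the hyperplane section defined by $L(f_{\gamma_i^\Delta})-1$. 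Assembling the factors gives the claimed formula. The main obstacle I expect is the change-of-basis bookkeeping in the third paragraph: verifying that the monodromy of the pulled-back local system along the $y_1$-loop is independent of the chosen completion of $u_i^\Delta$ to a basis and coincides with $B_i^\Delta$ up to conjugacy, together with checking that the Euler-characteristic count of Proposition \ref{prp:5-3} genuinely annihilates all contributions from strata $T_{\sigma_0}^{\prime\prime}$ of dimension $\leq n^{\prime}-2$, also when the local system is non-trivial.
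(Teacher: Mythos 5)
Your proposal follows the same route as the paper: the paper itself only sketches the proof of Theorem~\ref{thm:5-4}, indicating that one should run the argument of Theorem~\ref{thm:3-4} (via the toric resolution and Proposition~\ref{prp:2-9}) while replacing the constant coefficient sheaf by the local systems $\L_\Delta$, with Propositions~\ref{prp:5-2}, \ref{prp:5-3} and Lemma~\ref{lem:5-3-2} supplying the local-system bookkeeping. Your steps (orbit decomposition, toric resolution, reduction to one-dimensional cones via Proposition~\ref{prp:5-3}, the $\det(\id - t^m A)$ factor from Proposition~\ref{prp:5-2}(iii), and the identification $B_u = \prod_j (A_j^\Delta)^{u_j}$ via the $\pi_1(T_\Delta)\simeq M(\SS\cap\Delta)^*$ identification) are exactly the ones intended.

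Two small inaccuracies, neither of which damages the structure of the argument: first, $f_k^{\sigma_1}$ is not a unit along $T_{\sigma_0}^{\prime\prime}$; non-degeneracy only says that its zero locus there is a smooth, non-degenerate hypersurface, and indeed Proposition~\ref{prp:4-2} produces a constructible function on $T_{\sigma_0}^{\prime\prime}$ that jumps across $\{f_k^{\sigma_1}=0\}$. Second, the multiplicity in your Euler-characteristic display should be $\chi\bigl(T_{\sigma_0}^{\prime\prime}\setminus\{f_k^{\sigma_1}=0\}\bigr)$, not $\chi(\{L(f_{\gamma_i^\Delta})=1\})$; the former equals $-\chi(\{f_k^{\sigma_1}=0\})=(-1)^{\d\Delta-1}\Vol_\ZZ(\gamma_i^\Delta)$ by Theorem~\ref{thm:2-14} and $\chi((\CC^*)^{\d\Delta-1})=0$, whereas the level set $\{L(f_{\gamma_i^\Delta})=1\}$ has Newton polytope $\mathrm{Conv}(\gamma_i^\Delta\cup\{0\})$ and in general a different Euler characteristic. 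After this correction your integral over $\pi^{-1}(0)$ gives precisely the stated exponent, and the remaining concern you flag (well-definedness of $B_i^\Delta$ up to conjugacy under change of the completion of $u_i^\Delta$ to a $\ZZ$-basis, and vanishing of higher-dimensional strata for nontrivial $\L$) is exactly what Propositions~\ref{prp:5-2}(i)--(ii), \ref{prp:5-3} and Lemma~\ref{lem:5-3-2} are designed to handle, via the K\"unneth argument over connected components of $h^{\prime -1}(x)\simeq(\CC^*)^{l-1}\times\CC^{k-l}$.
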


By the same methods, also for non-degenerate complete intersection subvarieties $W=\{f_1=\cdots =f_{k-1}=0\}\supset V=\{f_1=\cdots =f_{k-1}=f_k=0\}$ in $X(\SS)$ and $T$-invariant constructible sheaves $\F$ on $X(\SS)$ we can give a formula for the monodromy zeta function
\begin{equation}
\zeta_{f_k}(\F_W)(t):=\zeta_{f_k,0}(\F_W)(t) \in \CC(t)^*
\end{equation}
of $\F_W =\F \otimes_{\CC_{X(\SS)}} \CC_W \in \Dbc(X(\SS))$ at $0 \in X(\SS)$. The precise formulation is now easy and left to the reader.


\end{document}